\numberwithin{equation}{section}
\theoremstyle{definition} \newtheorem{defi}{Definition}[section]}
\theoremstyle{plain} \newtheorem{thm}[defi]{Theorem}}
\theoremstyle{plain} }
\theoremstyle{plain} \newtheorem{prop}[defi]{Proposition}}
\theoremstyle{plain} \newtheorem{lem}[defi]{Lemma}}
\theoremstyle{plain} \newtheorem{rem}[defi]{Remark}}
\newcommand{\beq} {\begin{eqnarray*}}
\newcommand{\eeq} {\end{eqnarray*}}
\newcounter{assumption}
\newcommand{\hTTn}{\widehat{\theta}_{n}}
\newcommand{\veps}{\varepsilon}
\newcommand{\dC}{\mathbb{C}}
\newcommand{\dE}{\mathbb{E}}
\newcommand{\dK}{\mathbb{K}}
\newcommand{\dN}{\mathbb{N}}
\newcommand{\dP}{\mathbb{P}}
\newcommand{\dR}{\mathbb{R}}
\newcommand{\cC}{\mathcal{C}}
\newcommand{\cD}{\mathcal{D}}
\newcommand{\cH}{\mathcal{H}}
\newcommand{\cL}{\mathcal{L}}
\newcommand{\cN}{\mathcal{N}}
\newcommand{\cP}{\mathcal{P}}
\newcommand{\cS}{\mathcal{S}}
\newcommand{\cU}{\mathcal{U}}
\newcommand{\dd}{\mathrm{d}}
\newcommand{\de}{\mathrm{e}}
\newcommand{\di}{\mathrm{i}}
\newcommand{\ind}{\mathbbm{1}}
\def \E{\mathbb{E}}
\def \Var{\hbox{{\rm Var}}}
\def \P{\mathbb{P}}
\def\leq{\leqslant}
\def\geq{\geqslant}
\newcommand{\limn}{\lim_{n\, \rightarrow\, +\infty}}
\newcommand{\cvgl}{~ \overset{\cD}{\longrightarrow} ~}
\newcommand{\cvgd}{~ \Longrightarrow ~}
\newcommand{\cvgp}{~ \overset{\dP}{\longrightarrow} ~}
\newcommand{\as}{\hspace{0.3cm} \textnormal{a.s.}}
\newcommand{\tr}{^{T}}
\newcommand{\hsp}{\hspace{0.5cm}}
\email{}
\keywords{Autoregressive process, Bickel-Rosenblatt statistic, Goodness-of-fit, Hypothesis testing, Nonparametric estimation, Parzen-Rosenblatt density estimator, Residual process}
\begin{document}

\title[On the Bickel-Rosenblatt test for autoregressive processes]
{On the Bickel-Rosenblatt test of goodness-of-fit for the residuals of autoregressive processes
\vspace{2ex}}
\author[]{Agn\`es Lagnoux}
\address{Institut de Math\'ematiques de Toulouse; UMR5219. Universit\'e de Toulouse; CNRS. UT2J, F-31058 Toulouse, France.}
\email{lagnoux@univ-tlse2.fr}
\author[]{Thi Mong Ngoc Nguyen}
\address{Ho Chi Minh City University of Science, 227 Nguyen Van Cu, Phuong 4, Ho Chi Minh, Vietnam}
\email{ngtmngoc@hcmus.edu.vn}
\author[]{Fr\'ed\'eric Pro\"ia}
\address{Laboratoire Angevin de REcherche en MAth\'ematiques (LAREMA), CNRS, Universit\'e d'Angers, Universit\'e Bretagne Loire. 2 Boulevard Lavoisier, 49045 Angers cedex 01.}
\email{frederic.proia@univ-angers.fr}

\thanks{}

\begin{abstract}
We investigate in this paper a Bickel-Rosenblatt test of goodness-of-fit for the density of the noise in an autoregressive model. Since the seminal work of Bickel and Rosenblatt, it is well-known that the integrated squared error of the Parzen-Rosenblatt density estimator, once correctly renormalized, is asymptotically Gaussian for independent and identically distributed (i.i.d.)  sequences. We show that the result still holds when the statistic is built from the residuals of general stable and explosive autoregressive processes. In the univariate unstable case, we prove that the result holds when the unit root is located at $-1$ whereas we give further results when the unit root is located at $1$. In particular, we establish that except for some particular asymmetric kernels leading to a non-Gaussian limiting distribution and a slower convergence, the statistic has the same order of magnitude. We also study some common unstable cases, like the integrated seasonal process. Finally we build a goodness-of-fit Bickel-Rosenblatt test for the true density of the noise together with its empirical properties on the basis of a simulation study.
\end{abstract}

\maketitle

\section{Introduction and Motivations}
\label{SecIntro}

For i.i.d. sequences of random variables, there is a wide range of goodness-of-fit statistical procedures in connection with the underlying true distribution. Among many others, one can think about the Kolmogorov-Smirnov test, the Cram\'er-von Mises criterion, the Pearson's chi-squared test, or more specific ones like the whole class of normality tests. Most of them have become of frequent practical use and directly implemented on the set of residuals of regression models. For such applications the independence hypothesis is irrelevant, especially for time series where lagged dependent variables are included. Thus, the crucial issue that naturally arises consists in having an overview of their sensitivity facing some weakened assumptions. This paper focus on such a generalization for the \textit{Bickel-Rosenblatt} statistic, introduced by the eponymous statisticians \cite{BickelRosenblatt73} in 1973, who first established its asymptotic normality and gave their names to the associated testing procedure. The statistic is closely related to the $L^2$ distance on the real line between the Parzen-Rosenblatt kernel density estimator and a parametric distribution (or a smoothed version of it). Namely it takes the form of
\begin{equation*}
\int_{\dR} \big( \widehat{f}_{n}(x) - f(x) \big)^2 a(x)\, \dd x
\end{equation*}
with notation that we will detail in the sequel. Some improvements are of interest for us. First, Takahata and Yoshihara \cite{TakahataYoshihara87} in 1987 and later Neumann and Paparoditis \cite{NeumannPaparoditis00} in 2000 extended the result to weakly dependent sequences (with mixing or absolute regularity conditions). As they noticed, these assumptions are satisfied by several processes of the time series literature. Then, Lee and Na \cite{LeeNa02} showed in 2002 that it also holds for the residuals of an autoregressive process of order 1 as soon as it contains no unit root (we will explain precisely this fact in the paper). Such a study leads to a goodness-of-fit test for the distribution of the innovations of the process. Bachmann and Dette \cite{BachmannDette05} went further in 2005  by putting the results obtained by Lee and Na into practice. Their study also enables to get an asymptotic normality of the correctly renormalized statistic under some fixed alternatives. Even if it is not directly related to our subject, we also mention the work of Horv\'ath and Zitikis \cite{HorvathZitikis04} in 2004, providing some results on goodness-of-fit tests in $L^r$ norm (for any $r$ greater than 1) for the residuals of first-order autoregressive processes.

The goal of this paper is twofold. First and mainly, we generalize the
results of Lee and Na to autoregressive processes of order $p$ ($p \geq 1$) while refining the set of hypotheses and discussing on the effect of unit roots on the statistic of interest. Second, a goodness-of-fit test is derived as a corollary together with a short empirical study so as to get an overview of its capabilities. From a theoretical point of view, this is always a challenging target to extend well-known first-order results to more general cases, and it is even more difficult to deal with unit roots in a time series context. Moreover, autoregressive models are still widespread in applied fields like econometrics, mathematical finance, weather and energy forecasting, engineering, etc. Thus to provide some advances in the study of autoregressive processes, in terms of inference, prediction, statistical significance, or, in our case, goodness-of-fit, was our first motivation.

On finite samples, it has been observed that the Gaussian behavior is difficult to reach and that, instead, an asymmetry occurs for dependent frameworks (see, \textit{e.g.}, Valeinis and Locmelis \cite{ValeinisLocmelis12}). In the simulation study, we will use the configurations suggested in this previous paper, and that of Fan \cite{Fan94} and Ghosh and Huang \cite{GhoshHuang91} to try to minimize this effect. In the end of this section, we introduce the context of our study and present both notation and vocabulary used in the sequel. Moreover, we recall the well-known asymptotic behavior of the Bickel-Rosenblatt statistic for i.i.d. random variables. Section 3 is dedicated to our results that are proved in Appendix B. 
To sum up, we establish the asymptotic behavior of the Bickel-Rosenblatt statistic based on the residuals of stable and explosive autoregressive  processes of order $p$ ($p \geq 1$). In addition, we also prove results related to common unstable autoregressions, like the random walks and the seasonally integrated processes. Finally, we give some considerations when dealing with general unstable processes or mixed processes. For example, the unstable ARIMA($p-1$,1,0) process would deserve a particular attention due to its widespread use in the econometric field.
In Section 4, we build a goodness-of-fit test and discuss some empirical bases. The Appendix A is devoted to an overview of existing results on the least-squares estimation of the autoregressive parameter, depending on the roots of its characteristic polynomial, since it is of crucial interest in our proofs.

To start with, let us consider an autoregressive process of order $p$ (AR($p$)) defined by
\begin{equation}
\label{AR}
X_{t} = \theta_1\, X_{t-1} + \hdots + \theta_{p}\, X_{t-p} + \veps_t
\end{equation}
for any $t \geq 1$ or equivalently, in a compact form, by
\begin{equation*}
X_{t} = \theta\tr \Phi_{t-1} + \veps_t
\end{equation*}
where $\theta = (\theta_1, \hdots, \theta_{p})\tr$ is a vector parameter, $\Phi_0$ is an arbitrary initial random vector, $\Phi_{t} = (X_{t}, \hdots, X_{t-p+1} )\tr$ and $(\veps_t)$ is a strong white noise having a finite positive variance $\sigma^2$ and a marginal density $f$ (positive on the real line). The corresponding characteristic polynomial is defined, for all $z \in \dC$, by
\begin{equation}
\label{PolCar}
\Theta(z) = 1 - \theta_1\, z - \hdots - \theta_{p}\, z^{p}
\end{equation}
and the companion matrix associated with $\Theta$ (see, \textit{e.g.}, \cite[Sec. 4.1.2]{Duflo97}) is given by
\begin{equation}
\label{CompMat}
C_{\theta} = \begin{pmatrix}
\theta_1 & \theta_2 & \hdots & \theta_{p-1} & \theta_p \\
1 & 0 & \hdots & 0 & 0 \\
0 & 1 & \hdots & 0 & 0 \\
\vdots & \vdots & \ddots & \vdots & \vdots \\
0 & 0 & \hdots & 1 & 0
\end{pmatrix}.
\end{equation}
It follows that the process may also be written as
\begin{equation}
\label{VAR}
\Phi_{t} = C_{\theta}\, \Phi_{t-1} + E_{t}
\end{equation}
where $E_{t} = (\veps_{t}, 0, \hdots, 0)\tr$ is a $p$--dimensional noise. It is well-known that the stability of this $p$--dimensional process is closely related to the eigenvalues of the companion matrix that we will denote and arrange like
$$
\rho(C_{\theta}) = \vert \lambda_1 \vert \geq \vert \lambda_2 \vert \geq \hdots \geq \vert \lambda_{p} \vert.
$$
In particular, according to \cite[Def. 2.3.17]{Duflo97}, the process is said to be \textit{stable} when $\vert \lambda_1 \vert < 1$, \textit{purely explosive} when $\vert \lambda_{p} \vert > 1$ and \textit{purely unstable} when $\vert \lambda_1 \vert = \vert \lambda_{p} \vert = 1$. Among the purely unstable processes of interest, let us mention the seasonal model admitting the complex $s$--th roots of unity as solutions of its autoregressive polynomial $\Theta(z) = 1 - z^{s}$ for a season $s \in \dN\backslash\{ 0, 1\}$. In the paper, this model will be shortened as \textit{seasonal unstable of order $p=s$}, it satisfies $\theta_1 = \hdots = \theta_{s-1} = 0$ and $\theta_{s} = 1$. In a general way, it is easy to see that $\det(C_{\theta}) = (-1)^{p+1}\, \theta_{p}$ so that $C_{\theta}$ is invertible as soon as $\theta_{p} \neq 0$ (which will be one of our hypotheses when $p > 0$). In addition, a simple calculation shows that
\begin{equation*}
\det(C_{\theta} - \lambda\, I_{p}) = (-\lambda)^{p}\, \Theta(\lambda^{-1})
\end{equation*}
which implies (since $\Theta(0) \neq 0$) that each zero of $\Theta$ is the inverse of an eigenvalue of $C_{\theta}$. Consequently, the stability of the process may be expressed in the paper through the eigenvalues of $C_{\theta}$ as well as through the zeroes of $\Theta$. We will also consider that
\begin{equation*}
z^{p}\, \Theta(z^{-1}) = z^{p} - \theta_1\, z^{p-1} - \ldots - \theta_{p}
\end{equation*}
is the minimal polynomial of $C_{\theta}$ (which, in the terminology of \cite{Duflo97}, means that the process is \textit{regular}). Now, assume that $X_{-p+1}, \hdots, X_0, X_1, \hdots, X_{n}$ are observable (for $n \gg p$) and let 
\begin{equation}
\label{OLS}
\hTTn = \left(\sum_{t=1}^n \Phi_{t-1} \Phi_{t-1}\tr \right)^{\!-1} \sum_{t=1}^n \Phi_{t-1}\, X_{t}
\end{equation}
be the least-squares estimator of $\theta$ (for $p>0$). The associated residual process is
\begin{equation}
\label{ResSet}
\widehat{\veps}_{t} = X_{t} - \hTTn^{\: T}\, \Phi_{t-1}
\end{equation}
for all $1 \leq t \leq n$, or simply $\widehat{\veps}_{t} = X_{t}$ when $p=0$. Hereafter, $\dK$ is a kernel and $(h_{n})$ is a bandwidth. That is, $\dK$ is a non-negative function satisfying
\begin{equation*}
\int_{\dR} \dK(x)\, \dd x = 1, \hsp \int_{\dR} \dK^2(x)\, \dd x < +\infty \hsp \text{and} \hsp \int_{\dR} x^2\, \dK(x)\, \dd x < +\infty,
\end{equation*}
and $(h_{n})$ is a positive sequence decreasing to 0. The so-called Parzen-Rosenblatt estimator \cite{Parzen62,Rosenblatt56} of the density $f$ is given, for all $x \in \dR$, by
\begin{equation}
\label{PREst}
\widehat{f}_{n}(x) = \frac{1}{n\, h_{n}}\, \sum_{t=1}^n \dK\left( \frac{x - \widehat{\veps}_{t}}{h_{n}} \right).
\end{equation}
The local and global behaviors of this empirical density have been well studied in the literature. However, for a goodness-of-fit test, we focus on the global fitness of $\widehat{f}_{n}$ to $f$ on the whole real line. From this viewpoint, we consider the Bickel-Rosenblatt statistic that we define as
\begin{equation}
\label{BRStat}
\widehat{T}_{n} = n\, h_{n} \int_{\dR} \big( \widehat{f}_{n}(x) - (\dK_{h_{n}} * f)(x) \big)^2 a(x)\, \dd x
\end{equation}
where $\dK_{h_{n}} = h_{n}^{-1}\, \dK(\cdot/h_{n})$, $a$ is a positive piecewise continuous integrable function and $*$ denotes the convolution operator, \textit{i.e.} $(g*h)(x) = \int_{\dR} g(x-u)\, h(u)\, \dd u$. A statistic of probably greater interest and easier to implement is
\begin{equation}
\label{BRStatF0}
\widetilde{T}_{n} = n\, h_{n} \int_{\dR} \big( \widehat{f}_{n}(x) - f(x) \big)^2 a(x)\, \dd x.
\end{equation}
Bickel and Rosenblatt show in \cite{BickelRosenblatt73} that under appropriate conditions, if $T_{n}$ is the statistic given in \eqref{BRStat} built on the strong white noise $(\veps_{t})$ instead of the residuals, then, as $n$ tends to infinity,
\begin{equation}
\label{AsNormBR}
\frac{T_n - \mu}{\sqrt{h_{n}}} \cvgl \cN(0, \tau^2)
\end{equation}
where the centering term is
\begin{equation}
\label{CenterBR}
\mu = \int_{\dR} f(s)\, a(s)\, \dd s\, \int_{\dR} \dK^2(s)\, \dd s
\end{equation}
and the asymptotic variance is given by
\begin{equation}
\label{AsVarBR}
\tau^2 = 2\, \int_{\dR} f^{\, 2}(s)\, a^2(s)\, \dd s\, \int_{\dR} \left( \int_{\dR} \dK(t)\, \dK(t+s)\, \dd t \right)^{\! 2}\, \dd s.
\end{equation}
The aforementioned conditions are summarized in \cite[Sec. 2]{GhoshHuang91}, they come from the original work of Bickel and Rosenblatt later improved by Rosenblatt \cite{Rosenblatt75}. In addition to some technical assumptions that will be recalled in \ref{hyp:H0} and similarly to the results in \cite{LeeNa02}, notice that the results hold 
\begin{itemize}
\item either if $\dK$ is bounded and has a compact support and the bandwidth is given by $h_{n} = h_0\, n^{-\kappa}$ with $0 < \kappa < 1$;
\item or if $\dK$ is a continuous and positive kernel defined on $\dR$, the bandwidth is given by $h_{n} = h_0\, n^{-\kappa}$ with $0 < \kappa < 1/4$ and 
\begin{equation*}
\int_{\vert x \vert\, \geq\, 3}  \vert x \vert^{3/2}\, (\ln \ln \vert x \vert)^{1/2}\, \vert \dK^{\prime}(x) \vert\, \dd x < +\infty \hsp \text{and} \hsp \int_{\dR} (\dK^{\prime}(x))^2\, \dd x < +\infty.
\end{equation*}
\end{itemize}
We can find in later references (like \cite{TakahataYoshihara87}, \cite{NeumannPaparoditis00} or \cite{BachmannDette05}) some alternative proofs of the asymptotic normality \eqref{AsNormBR} with $a(x)=1$ and appropriate assumptions. However, in this paper, we keep $a$ as an integrable function in order to follow the original framework of Bickel and Rosenblatt. Fortunately, it also  makes the calculations easier and remains appropriate for applications since it is always possible to define a compact support including any mass of a given density $f_0$ to be tested, as it will be done in Section \ref{SecTest}.

\medskip

\textbf{Notation.} It is convenient to have short expressions for terms that converge in probability to zero. In the whole study, the notation $o_{\P}(1)$ (resp. $O_{\P}(1)$) stands for a sequence of random variables that converges to zero in probability (resp. is bounded in probability) as $n \to \infty$.

\section{The Bickel-Rosenblatt statistic}
\label{SecBR}

In this section, we derive the limiting distribution of the test statistics $\widehat{T}_{n}$ and $\widetilde{T}_{n}$ given by \eqref{BRStat} and \eqref{BRStatF0}, based on the residuals in the stable, some unstable and purely explosive cases.

\subsection{Assumptions}

For the whole study, we make the following assumptions.
\renewcommand{\theenumi}{(H\arabic{enumi})}
\renewcommand{\labelenumi}{\theenumi}

\begin{enumerate}
\addtocounter{enumi}{-1}
\item \label{hyp:H0} The strong white noise $(\veps_{t})$ has a bounded density $f$ which is positive, twice differentiable, and the second derivative $f^{\prime \prime}$ is itself bounded. The weighting function $a$ is positive, piecewise continuous and integrable. The kernel $\dK$ is bounded, continuous on its support. The kernel $\dK$ and the bandwidth $(h_{n})$ are chosen so that 
\begin{itemize}
\item either a) $\dK$ is bounded and has a compact support and the bandwidth is given by $h_{n} = h_0\, n^{-\kappa}$ with $0 < \kappa < 1$;
\item or b) $\dK$ is continuous and positive kernel defined on $\dR$, the bandwidth is given by $h_{n} = h_0\, n^{-\kappa}$ with $0 < \kappa < 1/4$ and 
\begin{equation*}
\int_{\vert x \vert\, \geq\, 3}  \vert x \vert^{3/2}\, (\ln \ln \vert x \vert)^{1/2}\, \vert \dK^{\prime}(x) \vert\, \dd x < +\infty \hsp \text{and} \hsp \int_{\dR} (\dK^{\prime}(x))^2\, \dd x < +\infty.
\end{equation*}
\end{itemize}

\setcounter{assumption}{\arabic{enumi}}
\end{enumerate}
Some additional hypotheses are given below, not simultaneously needed.
\begin{enumerate}
\setcounter{enumi}{\theassumption}
\item \label{hyp:H1} The kernel $\dK$ is such that $\dK^{\prime \prime \prime}$ exists and is bounded on its support,
\begin{equation*}
\int_{\dR} \big\{ \vert \dK^{\prime}(x) \vert + \vert \dK^{\prime \prime}(x) \vert \big\}\, \dd x < +\infty, \hsp \int_{\dR} ( \dK^{\prime \prime}(x) )^2\, \dd x < +\infty,
\end{equation*}
\begin{equation*}
\int_{\dR} \vert x\, \dK^{\prime}(x) \vert\, \dd x < +\infty \hsp \text{and} \hsp \int_{\dR} x^2\, \vert \dK^{\prime}(x) \vert\, \dd x < +\infty.
\end{equation*}
\item \label{hyp:H2} The kernel $\dK$ satisfies
\begin{equation*}
\int_{\dR} \vert \dK(x+\delta) - \dK(x) \vert\, \dd x \leq B\, \delta
\end{equation*}
for some $B > 0$ and all $\delta >0$.
\item \label{hyp:Halpha} For some $\alpha > 0$, the bandwidth $(h_{n})$ satisfies
\begin{equation*}
\limn n\, h_{n}^{\alpha} = +\infty.
\end{equation*}
\item \label{hyp:Hbeta} For some $\beta > 0$, the bandwidth $(h_{n})$ satisfies
\begin{equation*}
\limn n\, h_{n}^{\beta} = 0.
\end{equation*}
\item \label{hyp:Hbruit} The noise $(\veps_{t})$ and the initial vector $\Phi_0$ have a finite moment of order $\nu > 0$.
\renewcommand{\theenumi}{\alph{enumi}}
\renewcommand{\labelenumi}{\theenumi.}
\end{enumerate}
Notice that  
\begin{equation*}
\int_{\dR} \vert \dK'(x) \vert\, \dd x < +\infty
\end{equation*}
is a sufficient condition to get \ref{hyp:H2}.

\subsection{Asymptotic behaviors}

\begin{thm}[Stable case]
\label{ThmStable}
In the stable case ($\vert \lambda_1 \vert < 1$), assume that \ref{hyp:H0},  \ref{hyp:Halpha} with $\alpha = 4$, and  \ref{hyp:Hbruit} with $\nu = 4$ hold. Then, as $n$ tends to infinity,
\begin{equation*}
\frac{\widehat{T}_n - \mu}{\sqrt{h_{n}}} \cvgl \cN(0, \tau^2)
\end{equation*}
where $\mu$ and $\tau^2$ are given in \eqref{CenterBR} and \eqref{AsVarBR}, respectively. In addition, the result is still valid for $\widetilde{T}_{n}$ if  \ref{hyp:Hbeta} with $\beta = 9/2$ holds.
\end{thm}
\begin{proof}
See Section \ref{SecProofStable}.
\end{proof}

\begin{rem}
Theorem \ref{ThmStable} holds for $h_{n} = h_0\, n^{-\kappa}$ as soon as $2/9 < \kappa < 1/4$. A standard choice may be $h_{n} = h_0\, n^{-1/4+\epsilon}$ for a small $\epsilon > 0$.
\end{rem}

\begin{thm}[Purely explosive case]
\label{ThmExplo}
In the purely explosive case ($\vert \lambda_{p} \vert > 1$), assume that  \ref{hyp:H0},  \ref{hyp:H2},  \ref{hyp:Halpha} with $\alpha=1$ and  \ref{hyp:Hbruit} with $\nu = 2+\gamma$ hold, for some $\gamma > 0$. Then, as $n$ tends to infinity,
\begin{equation*}
\frac{\widehat{T}_n - \mu}{\sqrt{h_{n}}} \cvgl \cN(0, \tau^2)
\end{equation*}
where $\mu$ and $\tau^2$ are given in \eqref{CenterBR} and \eqref{AsVarBR}, respectively. In addition, the result is still valid for $\widetilde{T}_{n}$ if  \ref{hyp:Hbeta} with $\beta = 9/2$ holds.
\end{thm}
\begin{proof}
See Section \ref{SecProofExplo}.
\end{proof}

\begin{rem}
Hence by \ref{hyp:H0}, Theorem \ref{ThmExplo} holds for $h_{n} = h_0\, n^{-\kappa}$ as soon as $2/9 < \kappa < 1$ if $\dK$ has a compact support (the usual bandwidth $h_{n} = h_0\, n^{-1/4}$ is thus appropriate) or  as soon as $2/9 < \kappa < 1/4$ for a kernel positive on $\dR$. In that case, a standard choice may be $h_{n} = h_0\, n^{-1/4+\epsilon}$ for a small $\epsilon > 0$.
\end{rem}

We now investigate the asymptotic behavior of the statistics for some unstable cases. We establish in particular that the analysis of \cite{LeeNa02} is only partially true.
\begin{prop}[Purely unstable case]
\label{PropUnivUnstable}
In the unstable case for $p=1$ ($\lambda = \pm 1$), assume that  \ref{hyp:H0},  \ref{hyp:H1},  \ref{hyp:Halpha} with $\alpha = 4$, and  \ref{hyp:Hbruit} with $\nu = 4$ hold. If $\lambda=- 1$ then, as $n$ tends to infinity,
\begin{equation*}
\frac{\widehat{T}_n - \mu}{\sqrt{h_{n}}} \cvgl \cN(0, \tau^2)
\end{equation*}
where $\mu$ and $\tau^2$ are given in \eqref{CenterBR} and \eqref{AsVarBR}, respectively. If $\lambda=1$ and $\int_{\dR} \dK^{\prime}(s)\, \dd s = 0$ then
\begin{equation*}
\frac{\widehat{T}_n - \mu}{\sqrt{h_{n}}} = O_{\P}(1).
\end{equation*}
The results are still valid for $\widetilde{T}_{n}$ if  \ref{hyp:Hbeta} with $\beta = 9/2$ holds. Finally, if $\lambda=1$ and $\int_{\dR} \dK^{\prime}(s)\, \dd s \neq 0$ then, as $n$ tends to infinity,
\begin{equation*}
h_{n} (\widehat{T}_{n} - \mu) \cvgl \sigma^2 \left( \frac{\frac{1}{2}\, (W^2(1)-1)}{\int_0^1 W^2(u)\, \dd u}\, \int_0^1 W(u)\, \dd u \right)^{\! 2}\! \int_{\dR} f^2(s)\, a(s)\, \dd s \left( \int_{\dR} \dK^{\prime}(s)\, \dd s\right)^{\! 2}
\end{equation*}
where $(W(t),\, t \in [0,1])$ is a standard Wiener process. 

In the seasonal case with $p = s$, we also reach
\begin{equation*}
\frac{\widehat{T}_n - \mu}{\sqrt{h_{n}}} = O_{\P}(1)
\end{equation*}
or
\begin{equation*}
h_{n} (\widehat{T}_{n} - \mu) \cvgl \sigma^2 \big( S(W_{s})^{T}\, H(W_{s}) \big)^2\! \int_{\dR} f^2(s)\, a(s)\, \dd s \left( \int_{\dR} \dK^{\prime}(s)\, \dd s\right)^{\! 2}
\end{equation*}
depending on whether $\int_{\dR} \dK^{\prime}(s)\, \dd s = 0$ or $\int_{\dR} \dK^{\prime}(s)\, \dd s \neq 0$, respectively, where $S(W_{s})$ is defined in Proposition \ref{PropOLSUnstable}, $H(W_{s})$ will be clarified in the proof and $(W_{s}(t),\, t \in [0,1])$ is a standard Wiener process of dimension $s$.
\end{prop}
\begin{proof}
See Section \ref{SecProofUnstable}.
\end{proof}

\begin{rem}
This last result needs some observations.
\begin{itemize}
\item Proposition \ref{PropUnivUnstable} holds for $h_{n} = h_0\, n^{-\kappa}$ as soon as $2/9 < \kappa < 1/4$ \and $\lambda=-1$. A standard choice may be $h_{n} = h_0\, n^{-1/4+\epsilon}$ for a small $\epsilon > 0$.
\item One can observe that the value of $\int_{\dR} \dK^{\prime}(s)\, \dd s$ is crucial to deal with the unstable case. In fact, all usual kernels are even, leading to $\int_{\dR} \dK^{\prime}(s)\, \dd s = 0$. 
On simulations, the unstable case gives results similar to the stable and explosive ones, except for some very particular asymmetric kernels having different bound values that we can build to violate the 	aforementioned natural condition. In that case, the convergence is slower as we can see from the result of Proposition \ref{PropUnivUnstable}, and the limiting distribution is not Gaussian.
\item At the end of the associated proof, we show that, for $\lambda=1$ and a choice of kernel such that $\int_{\dR} \dK^{\prime}(s)\, \dd s \neq 0$, we also need $\int_{\dR} s\, \dK(s)\, \dd s = 0$ to translate the result from $\widehat{T}_{n}$ to $\widetilde{T}_{n}$ under  \ref{hyp:Hbeta} with $\beta = 6$. In this case, the result holds for $h_{n} = h_0\, n^{-\kappa}$ as soon as $1/6 < \kappa < 1/4$.
\item In \cite[Thm. 3.2]{LeeWei99}, Lee and Wei had already noticed that only the unit roots located at 1 affect the residual empirical process defined, for $0 \leq u \leq 1$, by
\begin{equation*}
\widehat{H}_{n}(u) = \frac{1}{\sqrt{n}} \sum_{t=1}^{n} \big( \ind_{\{ F(\widehat{\veps}_{t})\, \leq\, u \}} - u \big),
\end{equation*}
where $F$ is the cumulative distribution function of the noise. Our result is consistent with that fact since one may rewrite 
$\widehat{T}_{n}$ in terms of $\widehat{H}_{n}$ as
\begin{equation*}
\widehat{T}_{n} = n\, h_{n}\, \int_{\dR} \left( \frac{1}{h_{n}\, \sqrt{n}} \int_{\dR} \dK\left( \frac{x-s}{h_{n}} \right) \dd \widehat{H}_{n}(F(s)) \right)^{\, 2} a(x)\, \dd x.
\end{equation*}
Thus, $\widehat{H}_{n}$ appearing as the only stochastic part of the statistic, it is not surprising that, similarly, only the unit roots located at 1 affect our results.
\end{itemize}
\end{rem}

\begin{rem}[Beyond the results of Lee and Na] We conclude this section by a short comparison with the results of Lee and Na \cite{LeeNa02}. The set of hypotheses is refined but in the stable and explosive cases, the proofs follow similar strategies to extend their results from first-order $(p=1)$ to general autoregressive processes $(p \geq 1)$ and to the more natural unsmoothed statistic \eqref{BRStatF0}. However a substantial improvement lies in the unstable case, since this work is not only a matter of generalization of the known results, but also a matter of correction of them. In particular, we have established that the negative unit root, 	although behind instability, does not play any role. In addition, for a positive unit root, we have shown that the rate $h_{n}^{-1}$ is not systematic but only reached at the cost of the unrealistic hypothese $\int_{\dR} \dK^{\prime}(s)\, \dd s = 0$, and in this case we have provided the limiting distribution. Otherwise the order of magnitude of the statistic remains $h_{n}^{1/2}$ even if the limiting distribution (if it exists) is not still established.
\end{rem}

The authors of \cite{HorvathZitikis04} mentioned that the statistic has a rate of $n^{1/2}\, h_{n}^{(1-1/r)/2}$ when built on the residuals of first-order autoregressive processes, for a goodness-of-fit test based on the $L^{r}$ norm (which indeed corresponds to Bickel-Rosenblatt for $r=2$). Even if it is hardly comparable since it relies on the sample cumulative distribution and not on any density estimation, let us also recall that the Kolmogorov-Smirnov test, probably the most famous and popular non-parametric test of goodness-of-fit, is associated with an $L^{\infty}$ norm and has a rate of $\sqrt{n}$. The generalization of this work to these norms could also be valuable, by way of comparison.

To conclude this part, we draw the reader's attention to the fact that the purely unstable case is not fully treated. The general results may be a challenging study due to the phenomenon of compensation arising through unit roots different from 1. Lemma \ref{LemUnstable2} at the end of the Appendix B is not used as part of this paper but  may be a trail for future studies. It illustrates the compensation \textit{via} the fact that $(\sum_{k=1}^{t} X_{k})$ is of the same order as $(X_{t})$ in a purely unstable process having no unit root located at 1. Mixed models ($\vert \lambda_1 \vert \geq 1$ and $\vert \lambda_{p} \vert \leq 1$) should lead to similar reasonings, they also have to be handled to definitively lift the veil on the Bickel-Rosenblatt statistic for the residuals of autoregressive processes. As a priority, it seems that unstable ARIMA($p-1$,1,0) processes would deserve close investigations due to their widespread use in the econometric field. The difficulty arising here is that the estimator converges at the slow rate of stability while the process grows at the fast rate of instability: a compensation will be needed. Another technical improvement that would require an 	adjustment of the proofs, and probably a strengthening of the hypotheses on $\dK$ and $f$, could be the extension of the results to the weighting function $a(x) = 1$. The goodness-of-fit test could then be implemented on the whole real line and not only on a part of it, however big it is, as it will be the case in the next section.

\section{A goodness-of-fit Bickel-Rosenblatt test}
\label{SecTest}

Our objective is now to derive a goodness-of-fit testing procedure from the results established in the previous section. First, one can notice that the choice of
\begin{equation*}
a(x) = \left\{
\begin{array}{ll}
1/f(x) & \mbox{for } x \in [-\delta\,;\,\delta] \\
0 & \mbox{for } x \notin [-\delta\,;\,\delta]
\end{array}
\right.
\end{equation*}
for any $\delta > 0$ leads to the simplifications
\begin{equation*}
\widetilde{T}_{n} = n\, h_{n} \int_{-\delta}^{\delta} \frac{\big( \widehat{f}_{n}(x) - f(x) \big)^2}{f(x)}\, \dd x, \hsp \mu = 2\, \delta \int_{\dR} \dK^2(s)\, \dd s
\end{equation*}
and
\begin{equation*}
\tau^2 = 4\, \delta \int_{\dR} \left( \int_{\dR} \dK(t)\, \dK(t+s)\, \dd t \right)^{\! 2}\, \dd s.
\end{equation*}
Bickel and Rosenblatt \cite{BickelRosenblatt73} suggested a similar choice for the weight function $a$, with $[0\,;\,1]$ for compact support. Nevertheless, it seems more reasonable to work on a symmetric interval in order to test for the density of a random noise. In addition, $\mu$ and $\tau^2$ become independent of $f$, which will be useful to build a statistical procedure based on $f$. For a couple of densities $f$ and $f_0$ such that $f_0$ does not cancel on $[-\delta\,;\,\delta]$, let us define
\begin{equation}
\label{DistDens}
\Delta_{\delta}(f,f_0) = \int_{-\delta}^{\delta} \frac{\big( f(x) - f_0(x) \big)^2}{f_0(x)}\, \dd x.
\end{equation}
Hence, $\Delta_{\delta}(f,f_0) = 0$ means that $f$ and $f_0$ coincide almost everywhere on $[-\delta\,;\,\delta]$, and everywhere under our usual continuity hypotheses on the densities. On the contrary, $\Delta_{\delta}(f,f_0) > 0$ means that there exists an interval $I \subseteq [-\delta\,;\,\delta]$ with non-empty interior on which $f$ and $f_0$ differ. Accordingly, let
\begin{equation*}
\cH_0: ``\Delta_{\delta}(f,f_0) = 0" \quad \text{vs.} \quad \cH_1: ``\Delta_{\delta}(f,f_0) > 0".
\end{equation*}
The natural test statistic is therefore given by
\begin{equation}
\label{StatTest}
\widetilde{Z}_{n}^{\, 0} = \frac{\widetilde{T}_{n}^{\, 0} - \mu}{\tau\ \sqrt{h_{n}}}
\end{equation}
where $\widetilde{T}_{n}^{\, 0}$ is the statistic $\widetilde{T}_{n}$ reminded above built using $f_0$ instead of $f$.

\begin{prop}[A goodness-of-fit test]
\label{PropGof}
Consider the set of residuals from one of the following AR$(p)$ models:
\begin{itemize}
\item a stable process with $p \geq 1$, under the hypotheses of Theorem \ref{ThmStable},
\item an explosive process with $p \geq 1$, under the hypotheses of Theorem \ref{ThmExplo},
\item an unstable process with $p = 1$ and $\lambda=-1$, under the hypotheses of Proposition \ref{PropUnivUnstable}.
\end{itemize}
Then, under $\cH_0: ``\Delta_{\delta}(f,f_0) = 0"$ where $f_0$ is a continuous density which does not cancel on $[-\delta\,;\,\delta]$ for some $\delta>0$,
\begin{equation*}
\widetilde{Z}_{n}^{\, 0} \cvgl \cN(0,1).
\end{equation*}
In addition, under $\cH_1: ``\Delta_{\delta}(f,f_0) > 0"$,
\begin{equation*}
\widetilde{Z}_{n}^{\, 0} \cvgp +\infty.
\end{equation*}
\end{prop}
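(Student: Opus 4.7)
Since both $f$ and $f_0$ are continuous and $\Delta_\delta(f,f_0)=0$, the densities $f$ and $f_0$ coincide everywhere on $[-\delta,\delta]$. The weight
$a(x) = \ind_{[-\delta,\delta]}(x)/f_0(x)$
thus equals $\ind_{[-\delta,\delta]}(x)/f(x)$, so $\widetilde{T}_n^{\,0}$ is literally the statistic $\widetilde{T}_n$ of~\eqref{BRStatF0} for this weight. Applying the relevant asymptotic normality --- Theorem~\ref{ThmStable} in the stable case, Theorem~\ref{ThmExplo} in the explosive case, or Proposition~\ref{PropUnivUnstable} for $\lambda=-1$ --- gives
\begin{equation*}
\frac{\widetilde{T}_n^{\,0} - \mu}{\sqrt{h_n}} \cvgl \cN(0, \tau^2),
\end{equation*}
where $\mu$ and $\tau^2$ take the simplified forms $\mu = 2\delta \int \dK^2$ and $\tau^2 = 4\delta \int (\int \dK(t)\dK(t+s)\,\dd t)^2\, \dd s$ computed at the beginning of Section~\ref{SecTest}, neither of which depends on $f$. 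Division by $\tau$ then yields $\widetilde{Z}_n^{\,0}\cvgl \cN(0,1)$.

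\textbf{Under $\cH_1$.} Set $\Delta := \Delta_\delta(f,f_0) > 0$. Expanding $\widehat{f}_n - f_0 = (\widehat{f}_n - f) + (f - f_0)$ and integrating against the weight produces the decomposition
\begin{equation*}
\widetilde{T}_n^{\,0} = A_n + 2 B_n + n h_n\, \Delta,
\end{equation*}
where
\begin{equation*}
A_n = n h_n \!\int_{-\delta}^{\delta} \frac{(\widehat{f}_n(x) - f(x))^2}{f_0(x)}\, \dd x, \hsp B_n = n h_n \!\int_{-\delta}^{\delta} \frac{(\widehat{f}_n(x) - f(x))(f(x) - f_0(x))}{f_0(x)}\, \dd x.
\end{equation*}
The crucial observation is that $A_n$ is itself a Bickel-Rosenblatt statistic of the form~\eqref{BRStatF0}, but with weight $a'(x) = \ind_{[-\delta,\delta]}(x)/f_0(x)$. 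Since $f_0$ is continuous and does not vanish on $[-\delta,\delta]$, this $a'$ is positive, piecewise continuous, and bounded (hence integrable), so the hypotheses of the governing theorem are met and $A_n$ is concentrated around a deterministic constant with fluctuations of order $\sqrt{h_n}$. In particular, $A_n = O_{\P}(1)$. Cauchy-Schwarz then gives
\begin{equation*}
B_n^2 \leq A_n \cdot n h_n \!\int_{-\delta}^{\delta} \frac{(f(x)-f_0(x))^2}{f_0(x)}\, \dd x = A_n \cdot n h_n\, \Delta = O_{\P}(n h_n),
\end{equation*}
so $B_n = O_{\P}(\sqrt{n h_n})$. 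Plugging this into~\eqref{StatTest},
\begin{equation*}
\widetilde{Z}_n^{\,0} = \frac{n h_n\, \Delta}{\tau \sqrt{h_n}} + O_{\P}(\sqrt{n}) = \frac{n \sqrt{h_n}\, \Delta}{\tau}\, (1 + o_{\P}(1)).
\end{equation*}
Under (A$_3$) the bandwidth $h_n = h_0 n^{-\kappa}$ satisfies $\kappa < 1$, so $n\sqrt{h_n} \to +\infty$, and the divergence $\widetilde{Z}_n^{\,0} \cvgp +\infty$ follows.

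\textbf{Main obstacle.} The one non-routine ingredient is noticing that $A_n$ is covered by the theorems already established in Section~\ref{SecBR} even though its weight is $1/f_0$ rather than $1/f$ --- this is legitimate because those theorems are formulated for an \emph{arbitrary} positive, piecewise continuous, integrable weight $a$, and the true underlying density driving the residuals is still~$f$. Once this is recognized, the rest is a signal-versus-noise comparison: the deterministic bias $n h_n \Delta$ grows like $n^{1-\kappa}$, the stochastic error is only of order $n^{(1-\kappa)/2}$, and the standardization by $\sqrt{h_n}$ preserves a final divergence at the rate $n\sqrt{h_n}$.
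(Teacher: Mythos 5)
Your proof is correct, and it follows the same underlying idea as the paper's (very terse) argument, but it organizes the $\cH_1$ part around a different decomposition. The paper simply writes $\widetilde{Z}_{n}^{\,0} = (\widetilde{T}_{n} - \mu)/(\tau\sqrt{h_{n}}) + (\widetilde{T}_{n}^{\,0} - \widetilde{T}_{n})/(\tau\sqrt{h_{n}})$, pivoting on the statistic $\widetilde{T}_{n}$ built with the true-$f$ weight $1/f$, and leaves the divergence of the second term to the reader; you instead keep the weight $1/f_0$ throughout and expand $(\widehat{f}_{n}-f_0)^2$ around $f$, so that the quadratic term $A_{n}$ is recognized as a Bickel--Rosenblatt statistic with the admissible weight $\ind_{[-\delta,\delta]}/f_0$ --- exactly the observation that the theorems of Section~\ref{SecBR} are stated for an arbitrary positive, piecewise continuous, integrable $a$. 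Your route has the advantage of making explicit what the paper's hint hides: the deterministic drift $n\,h_{n}\,\Delta$, the Cauchy--Schwarz control $B_{n}=O_{\P}(\sqrt{n\,h_{n}})$ of the cross term, and the resulting divergence rate $n\sqrt{h_{n}}\,\Delta/\tau$, which is guaranteed by $n\,h_{n}\to+\infty$ under (A$_3$). The $\cH_0$ part is identical in both treatments. The only cosmetic caveat is that in your final display the centering $\mu=2\delta\int\dK^2$ is not the natural centering of $A_{n}$ (that would be $\int_{-\delta}^{\delta} (f/f_0)\int\dK^2$), but since both are $O(1)$ constants this changes nothing in the $O_{\P}(\sqrt{n})$ error bound.
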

\begin{proof}
The proof is immediate using our previous results. The consistency under $\cH_1$ is reached using the fact that
\begin{equation*}
\widetilde{Z}_{n}^{\, 0} = \frac{\widetilde{T}_{n} - \mu}{\tau\ \sqrt{h_{n}}} + \frac{\widetilde{T}_{n}^{\, 0} - \widetilde{T}_{n}}{\tau\ \sqrt{h_{n}}}.
\end{equation*}
\end{proof}
For any level $0 < \alpha < 1$, we reject $\cH_0$ as soon as
\begin{equation*}
\widetilde{Z}_{n}^{\, 0} > u_{1-\alpha}
\end{equation*}
where $u_{1-\alpha}$ stands for the $(1-\alpha)$--quantile of the $\cN(0,1)$ distribution. For our simulations, we focus on a normality test (probably the most useful in regression, for goodness-of-fit). We have trusted the observations of \cite{GhoshHuang91}, \cite{Fan94} or \cite{ValeinisLocmelis12}. In particular, only the $\cN(0,1)$ and the $\cU([-1\,;\,1])$ kernels are used, with obviously $\int_{\dR} \dK^{\prime}(s)\, \dd s = 0$. The bandwidth is $h_{n} = h_0\, n^{-1/4+\epsilon}$ for $\epsilon = 10^{-3}$ where $h_0$ is calibrated to reach an empirical level close to $\alpha=5\%$ for the neutral model ($p=0$), and the true distribution of the noise is $\cN(0,1)$. We shall note at this point that, despite appearances, the choice of $\kappa$ is not crucial for our study. Indeed, for the moderate values of $n$ that we consider, $h_0$ plays a more important role. That is the reason why we choose $\kappa$ in the restricted area of validity $(2/9 < \kappa < 1/4)$ and why we focus first on $h_0$. The selection of the hyperparameter $\delta$ will be described thereafter. We only give an overview of the results in Table \ref{TabH0} for some typical models:
\begin{itemize}
\item M$_0$ -- neutral model ($p=0$),
\item M$_1$ -- stable model ($p=3$, $\theta_1 = -{1}/{12}$, $\theta_2 = {5}/{24}$, $\theta_3 = {1}/{24}$),
\item M$_2$ -- stable but almost unstable model ($p=1$, $\theta_1 = {99}/{100}$),
\item M$_3$ -- unstable model with negative unit root ($p=1$, $\theta_1 = -1$),
\item M$_4$ -- unstable model with positive unit root ($p=1$, $\theta_1 = 1$),
\item M$_5$ -- explosive model ($p=2$, $\theta_1 = 0$, $\theta_2 = {121}/{100}$).
\end{itemize}

\begin{table}[h!]
\centering
\begin{tabular}{|c||ccc||ccc|} 
\hline
$\dK$ & \multicolumn{3}{|c||}{$\cN(0,1)$} & \multicolumn{3}{|c|}{$\cU([-1\,;\,1])$} \\
\hline
$n$ & 50 & 100 & 500 & 50 & 100 & 500 \\
\hline
$h_0$ & 0.10 & 0.14 & 0.14 & 0.20 & 0.25 & 0.32 \\
\hline
M$_0$ & 0.051 & 0.049 & 0.049 & 0.052 & 0.048 & 0.048 \\
M$_1$ & 0.059 & 0.051 & 0.047 & 0.058 & 0.050 & 0.050 \\
M$_2$ & 0.055 & 0.046 & 0.051 & 0.044 & 0.046 & 0.051 \\
M$_3$ & 0.054 & 0.054 & 0.047 & 0.059 & 0.051 & 0.048 \\
M$_4$ & 0.048 & 0.051 & 0.047 & 0.050 & 0.045 & 0.050 \\
M$_5$ & 0.049 & 0.047 & 0.058$^{(*)}$ & 0.046 & 0.049 & 0.054$^{(*)}$ \\
\hline
\end{tabular}\medskip
\caption{Empirical level of the test under $\cH_0$, for the configurations described above. We used $n \in \{ 50, 100, 500 \}$ and $1000$ replications. $^{(*)}$Simulations that needed more than one numerical trial, due to the explosive nature of the process and the large value of $n$.}
\label{TabH0}
\end{table}

For model M$_4$, it is important to note that Proposition \ref{PropGof} may not hold. Nevertheless, we know by virtue of Proposition \ref{PropUnivUnstable} that the statistic has the same order of magnitude, thus it seemed interesting to look at its empirical behavior in comparison with the other models (and we observe that it reacts as well). Now we turn to the frequency of rejection of $\cH_0$ when $f_0$ is not the true distribution, for the configuration $n=100$ and the $\cN(0,1)$ kernel. First of all, we are going to discuss on the hyperparameter $\delta$ that has to be non-arbitrary. A natural choice relies on the quantiles of the distribution that we test, to guarantee that a sufficient mass is covered by the statistic. As the simulations below will emphasize, at least 95\% seems relevant and accordingly, we let
\begin{equation*}
\delta = \max(-q_{0}(0.025), q_{0}(0.975))
\end{equation*}
where $q_{0}$ is the quantile function of $f_0$, which takes the simplified form $\delta = q_{0}(0.975)$ whenever the distribution is both centered and symmetrical. The latter situation is generally preferred since we deal with the noise of a regression model, but for example in our simulations we also need the general one for $\cN(m,1)$. Among the alternatives $\cN(m,1)$ and $\cN(0,\sigma^2)$, we pick two control models that will be $\cN(0.5,1)$ and $\cN(0,0.3)$, chosen to be the closest distributions for which our procedure is able to reach a maximal frequency of rejection under this configuration (see Figures \ref{FigH1moy} and \ref{FigH1var} below). Figure \ref{FigDel} is an illustration of the percentage of rejection (together with its 95\% confidence interval) of $\cH_0$ while we make $\delta$ increase, for the neutral model M$_0$. The corresponding values $\delta \approx 2.46$ and $\delta \approx 1.07$ that we suggest to use respectively for these examples seem therefore suitable.

\begin{figure}[h!]
\centering
\includegraphics[width=7.5cm]{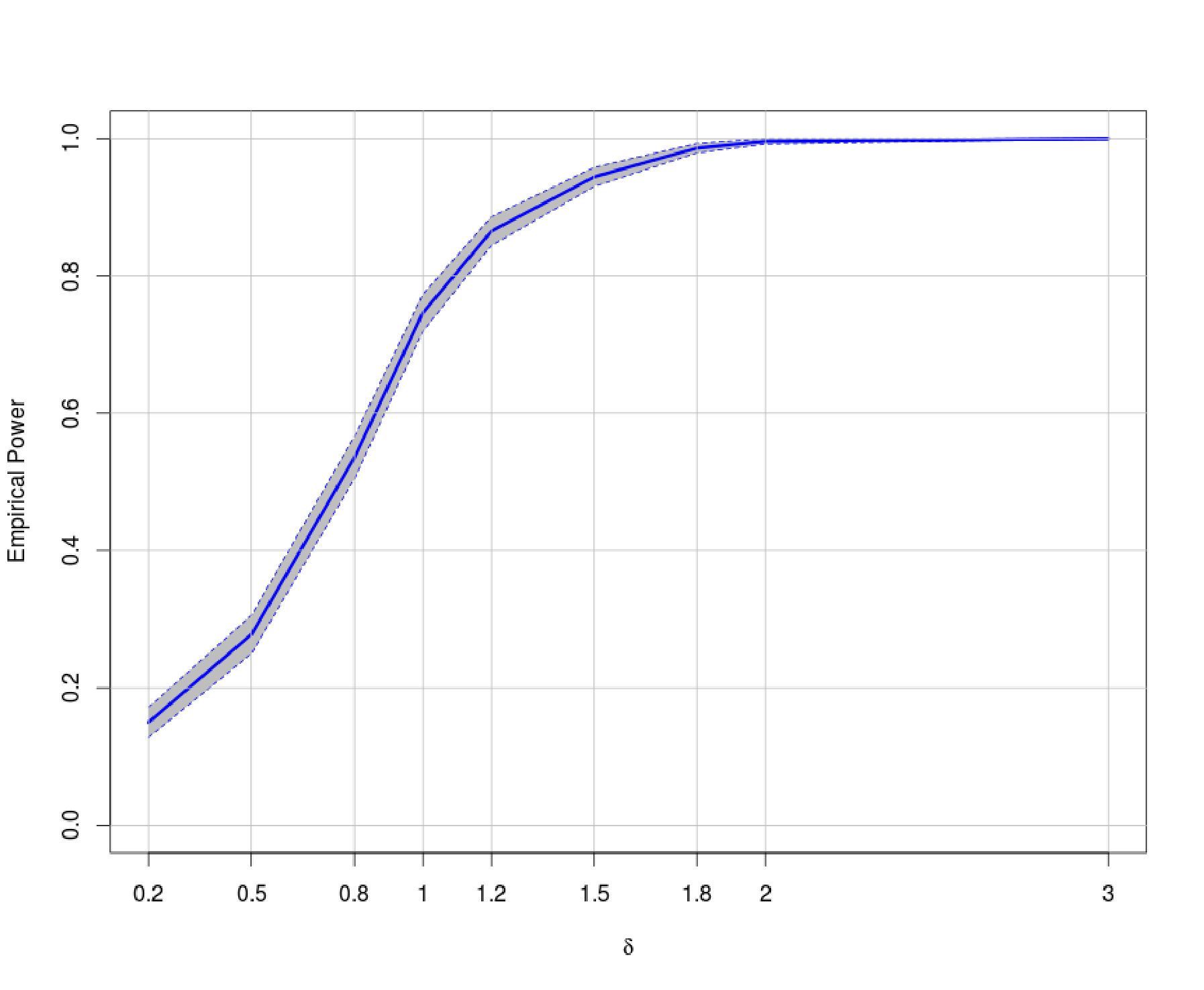}~\includegraphics[width=7.5cm]{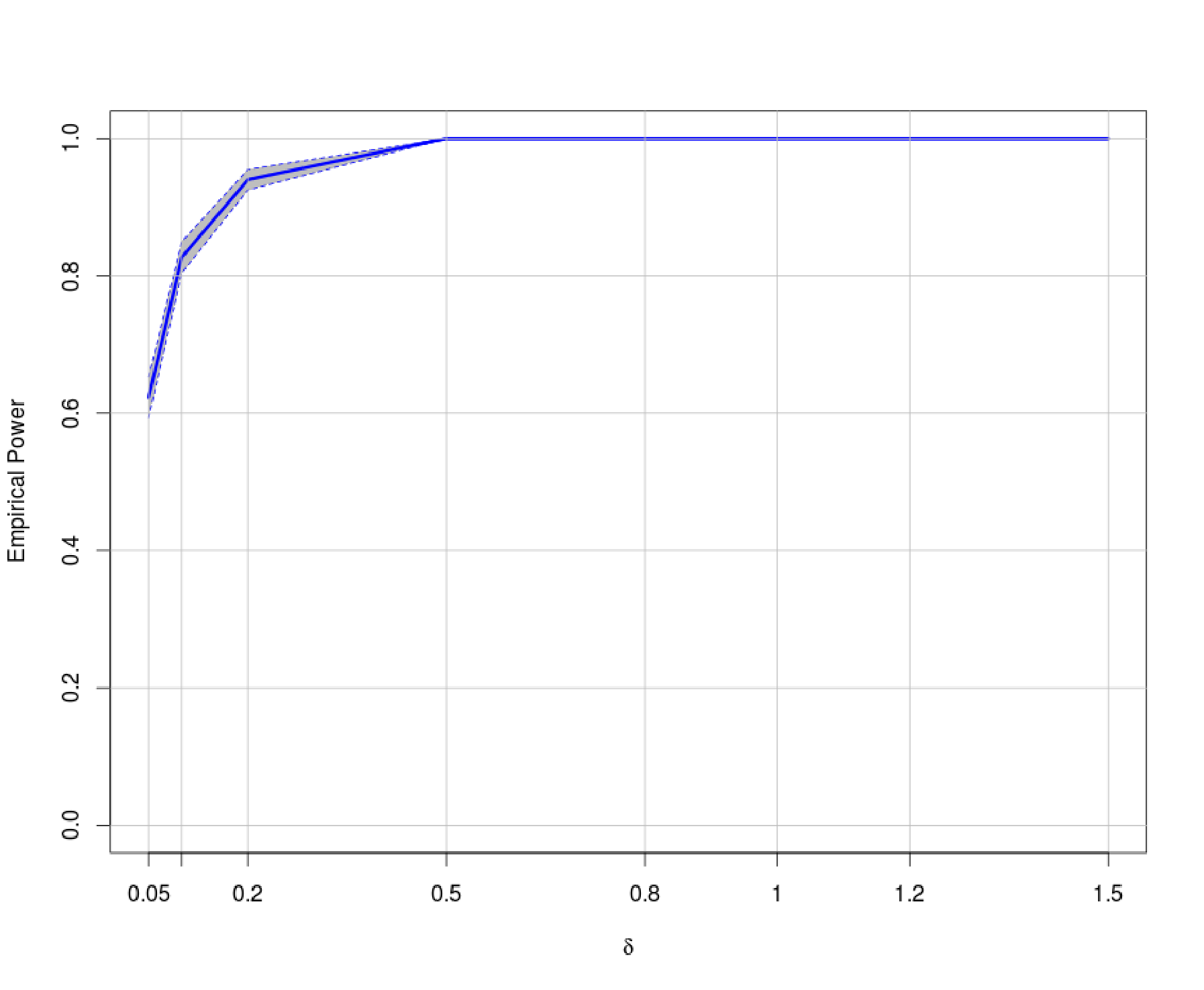}\medskip
\caption{Frequency of rejection of the test for $n=100$, $\dK=\cN(0,1)$, $h_0=0.14$ and $1000$ replications, depending on $\delta$ on the abscissa. The darkened areas are the 95\% confidence intervals. The alternatives are $\cN(0.5, 1)$ on the left, and $\cN(0, 0.3)$ on the right.}
\label{FigDel}
\end{figure}

Using this choice of $\delta$, we represent in Figures \ref{FigH1moy}--\ref{FigH1var} below the percentage of rejection of $\cH_0$ against the aforementioned $\cN(m,1)$ and $\cN(0,\sigma^2)$ alternatives for $f_0$, for different values of the parameters, to investigate the sensitivity towards location and scale. We also make experiments in Figure \ref{FigH1dist} with different distributions as alternatives (choosing $f_0$ as the Student, uniform, Laplace and Cauchy distributions, respectively). First of all, the main observation is that all models give very similar results (all curves are almost superimposed) even if $n$ is not so large. That corroborates the results of the paper: residuals from stable, explosive or some (univariate) unstable models satisfy the Bickel-Rosenblatt original convergence. Our procedure is roughly equivalent to the Kolmogorov-Smirnov one to test for location or scale in the Gaussian family (in fact it seems to be less powerful for location and slightly more powerful for scale). However, it appears that our procedure better managed to discriminate some alternatives with close but different distributions. The objective of the paper is mainly theoretical and of course, a much more extensive study is needed to give any permanent conclusion about the comparison (sensitivity in $n$, $h_0$, $\kappa$, $\dK$, $\delta$, role of the true distribution of $(\veps_{t})$, etc.). Also, we must not underestimate the difficulty to proceed to the numerical integrations required for the statistic, and the very likely improvements that may stem from in-depth applied studies.

\begin{figure}[h!]
\centering
\includegraphics[width=5cm]{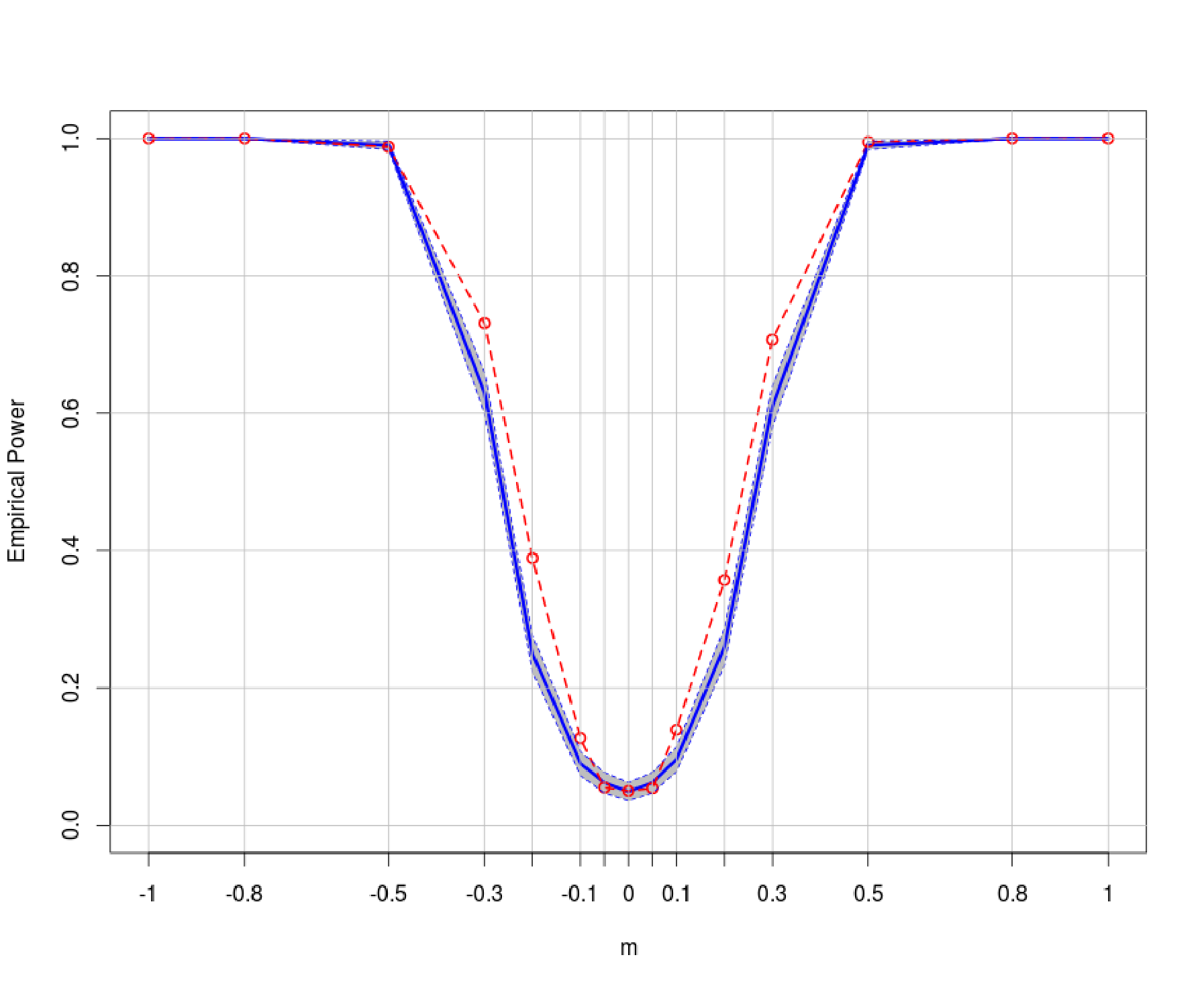}~\includegraphics[width=5cm]{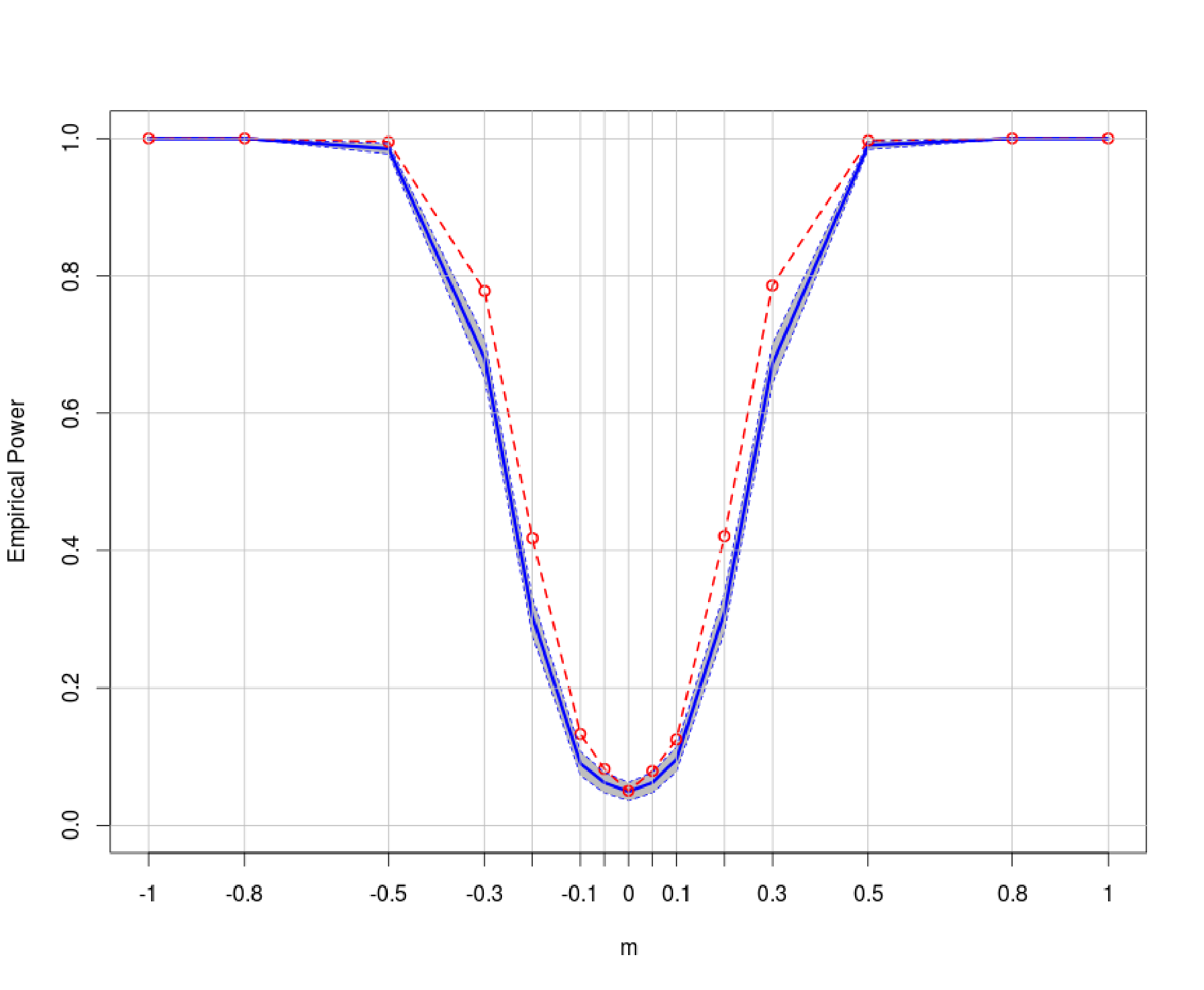}~\includegraphics[width=5cm]{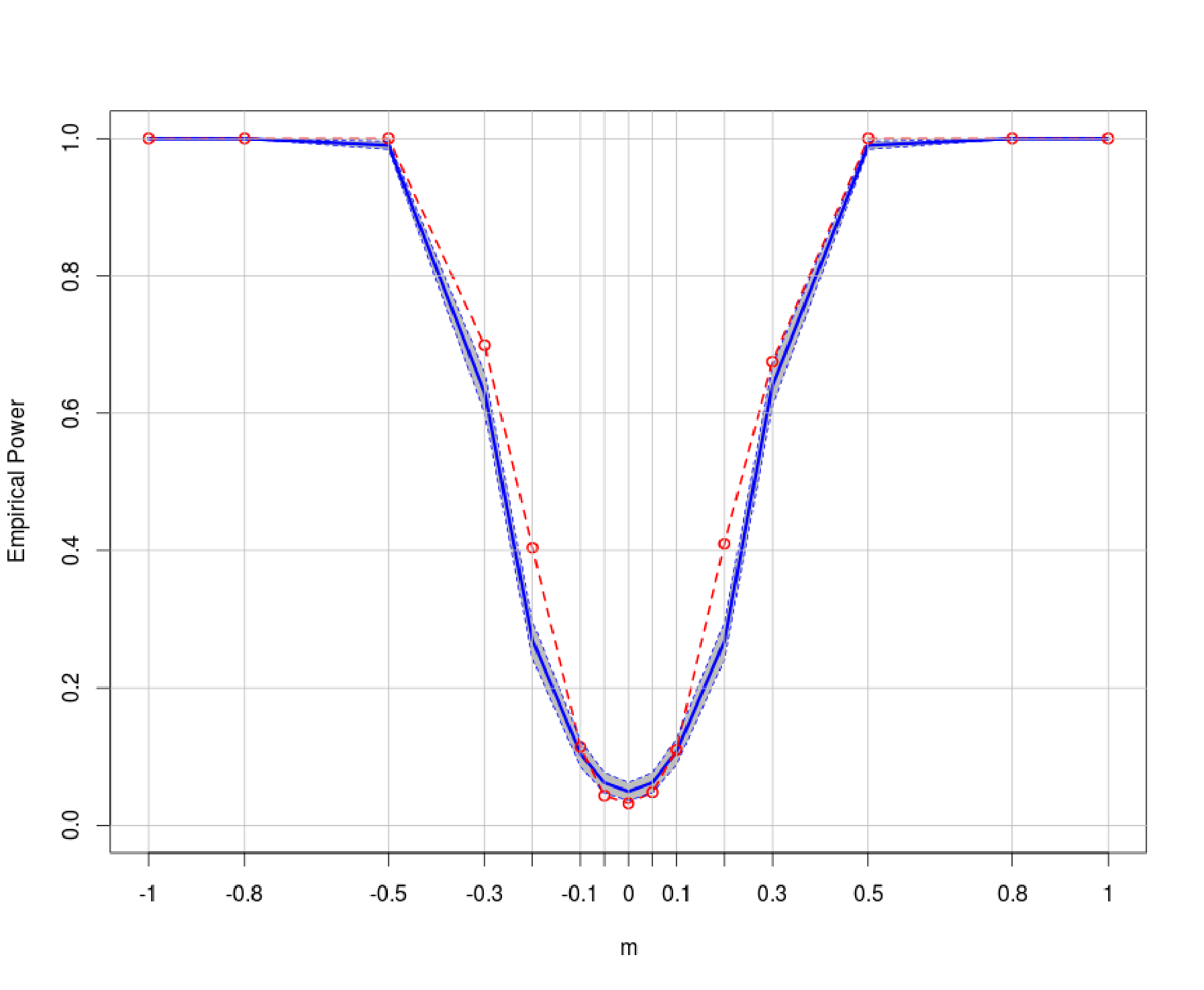}\\ \includegraphics[width=5cm]{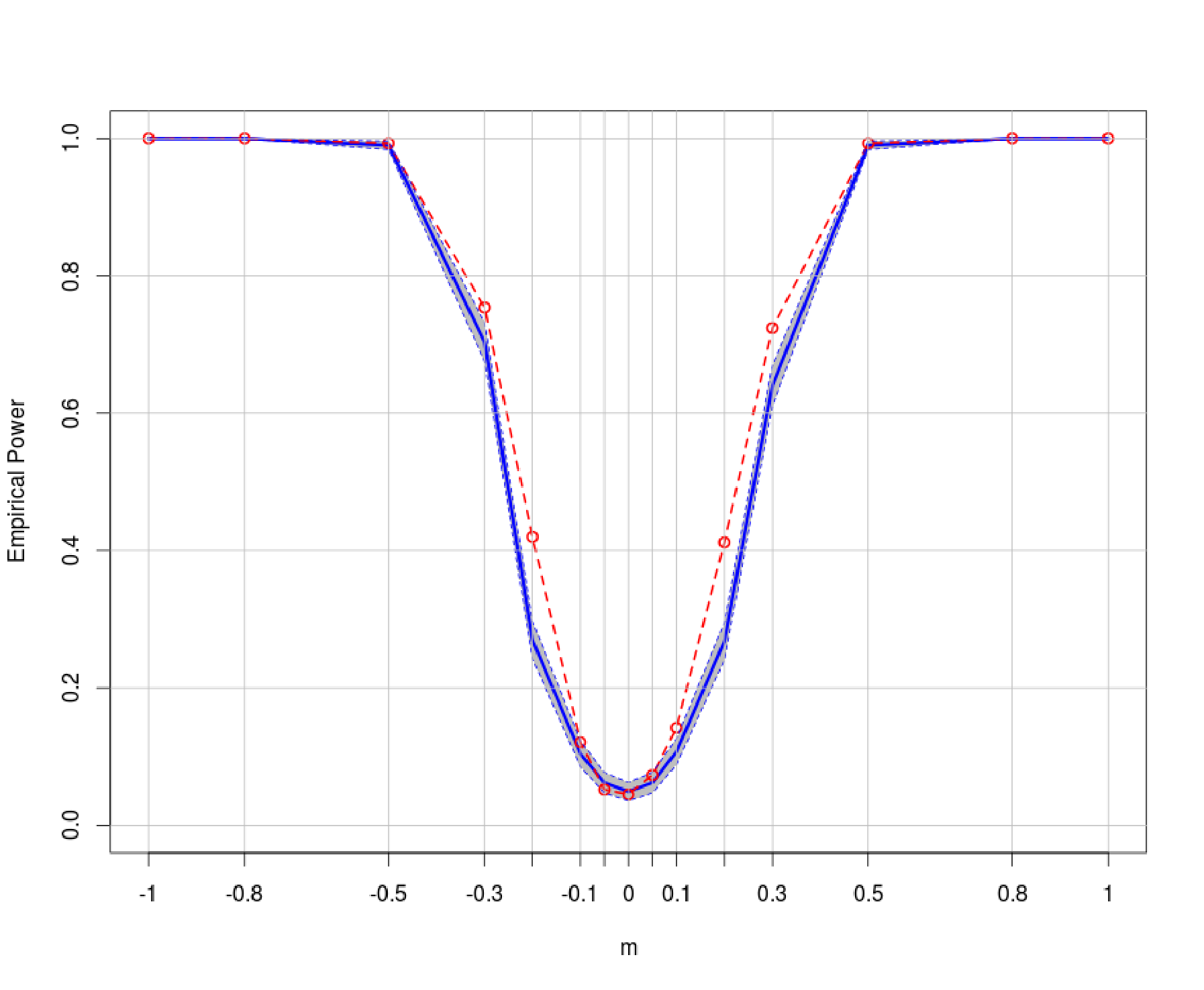}~\includegraphics[width=5cm]{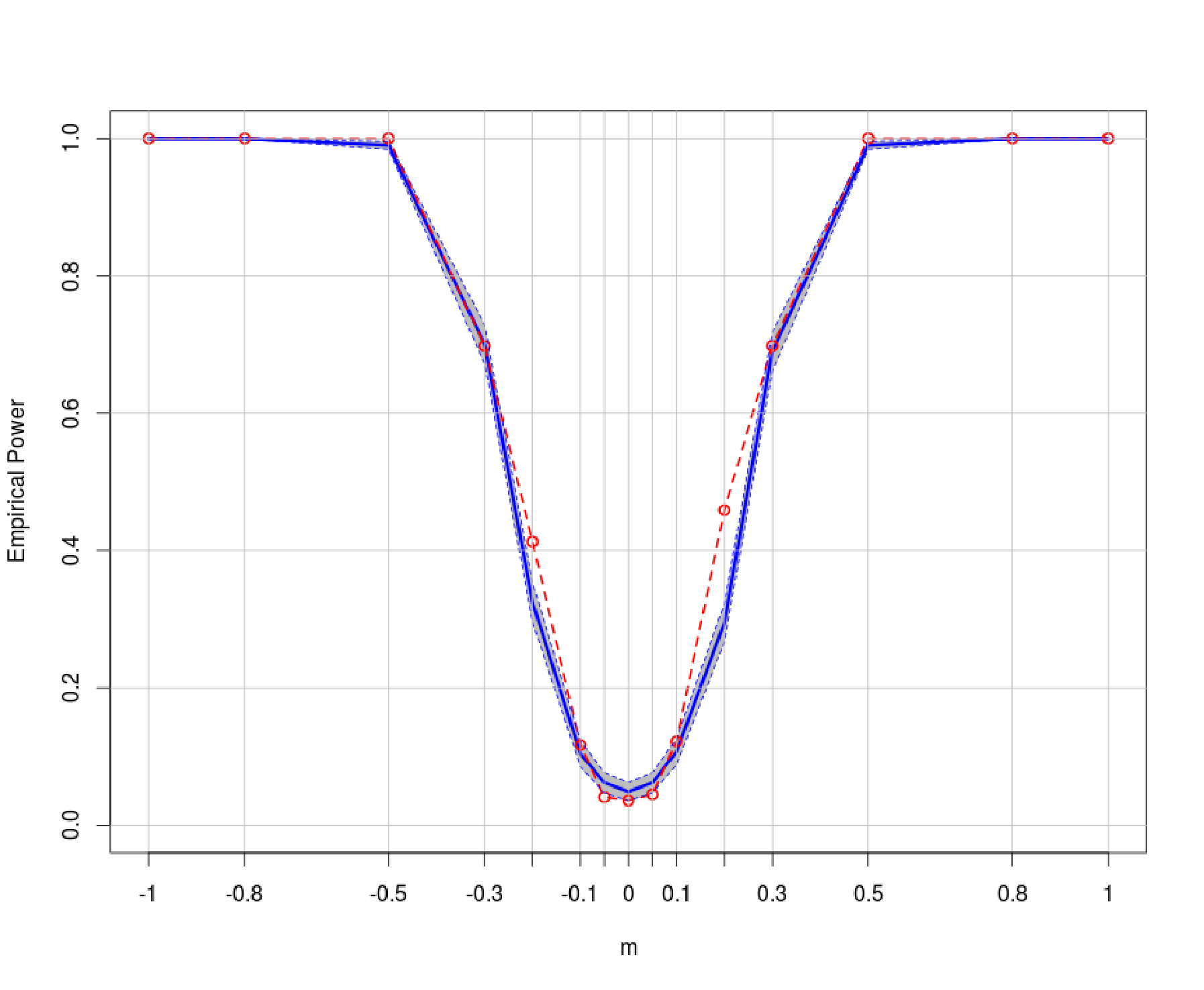}~\includegraphics[width=5cm]{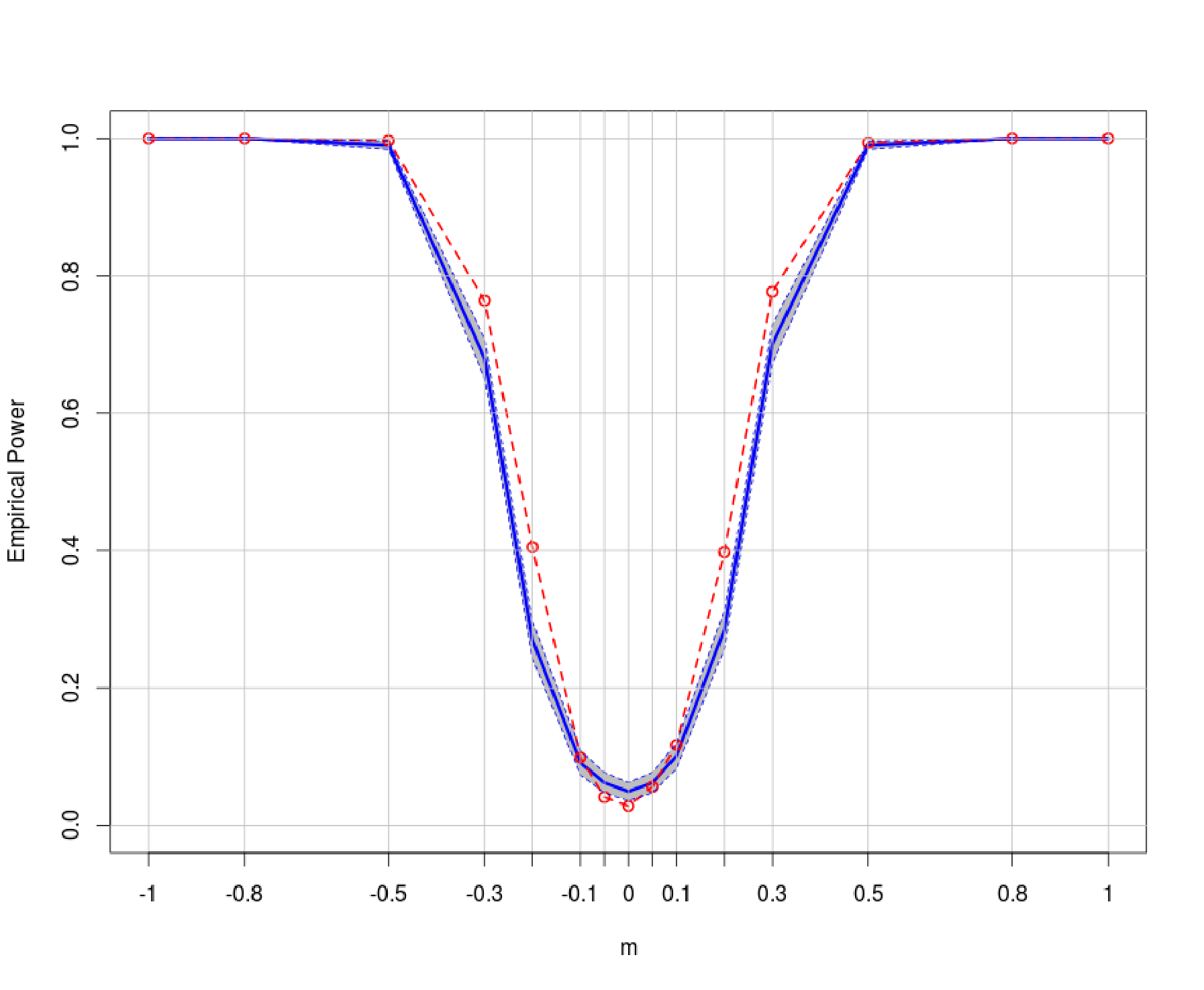}\medskip
\caption{Frequency of rejection of the test for $n=100$, $\dK=\cN(0,1)$, $h_0=0.14$ and $1000$ replications. The alternative $f_0$ are $\cN(m, 1)$ for some $m \in [ -1\,;\,1]$ with true value $m=0$. Results obtained from M$_0$, M$_1$ and M$_2$ are in the top (from left to right), results obtained from M$_3$, M$_4$ and M$_5$ are in the bottom (from left to right). The dotted line in red corresponds to the Kolmogorov-Smirnov test and the darkened areas are the 95\% confidence intervals.}
\label{FigH1moy}
\end{figure}

\begin{figure}[h!]
\centering
\includegraphics[width=5cm]{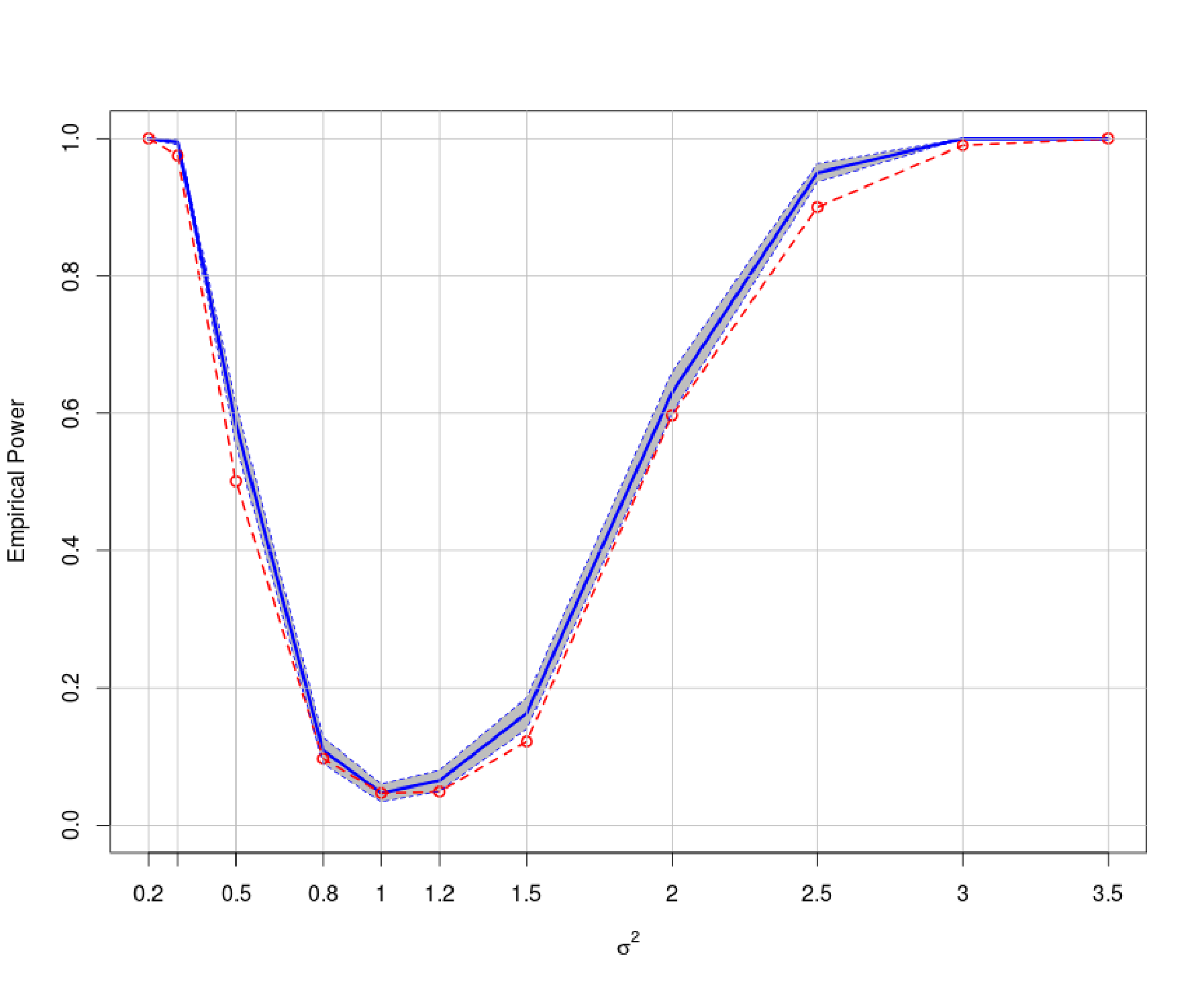}~\includegraphics[width=5cm]{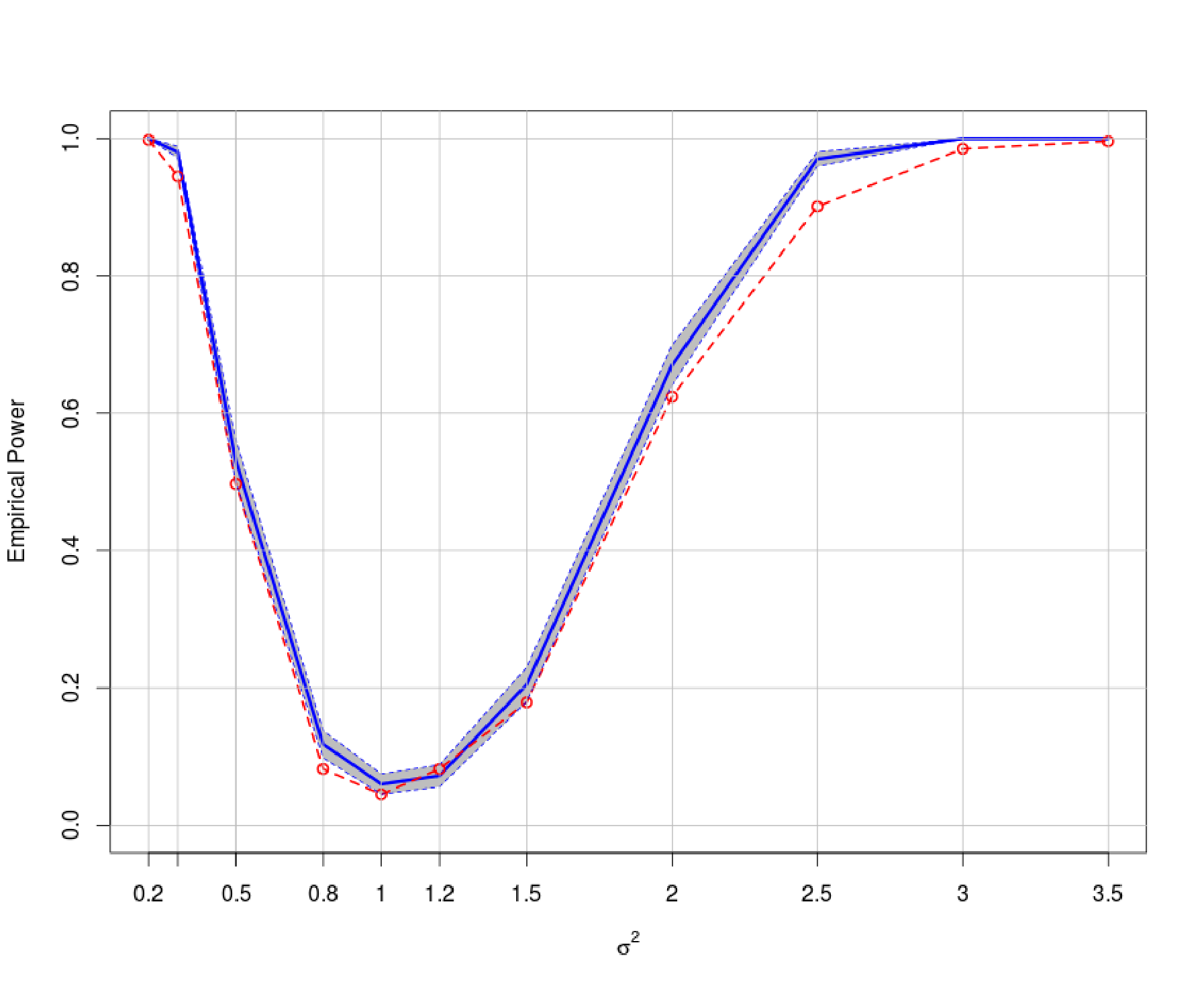}~\includegraphics[width=5cm]{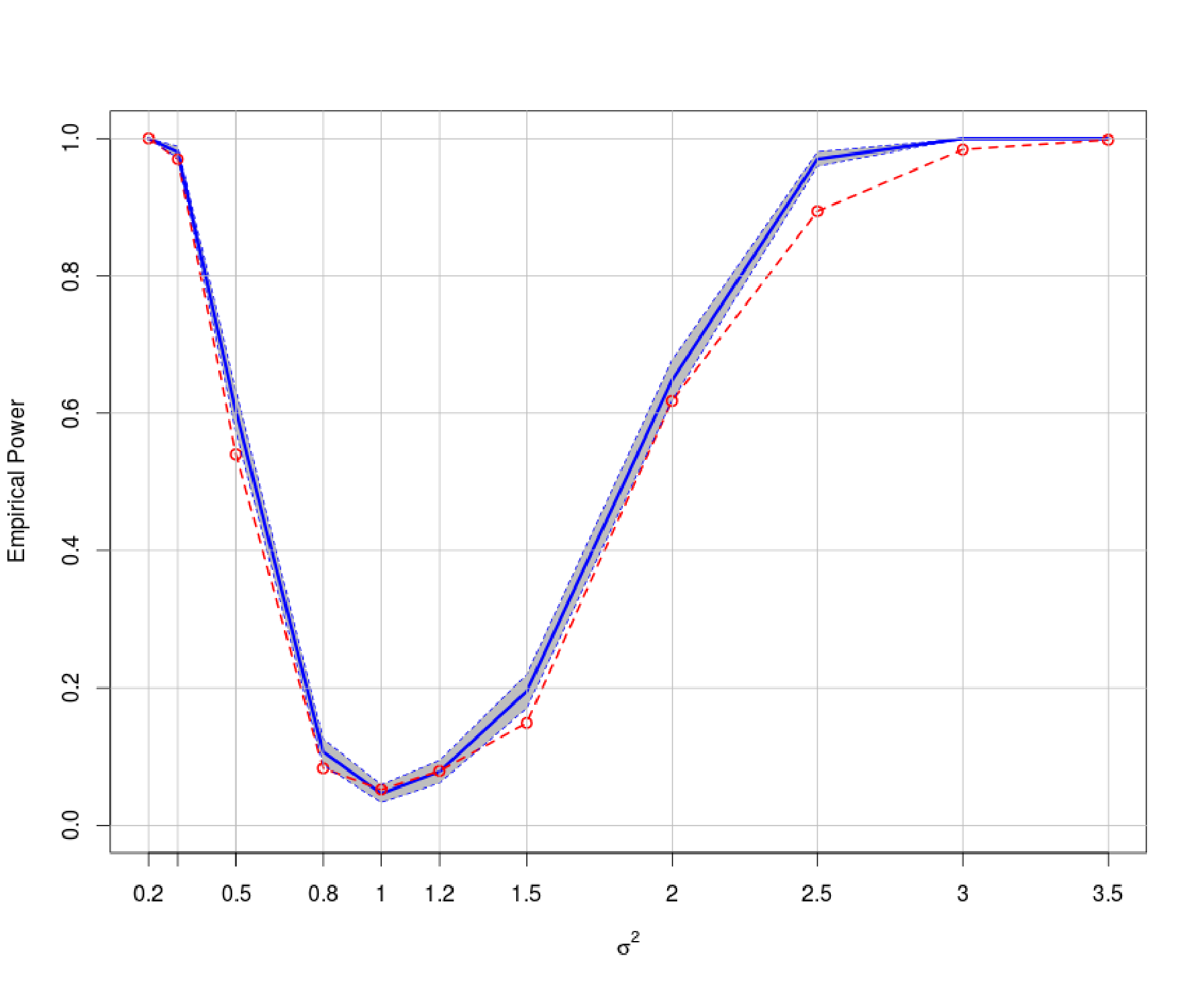}\\\includegraphics[width=5cm]{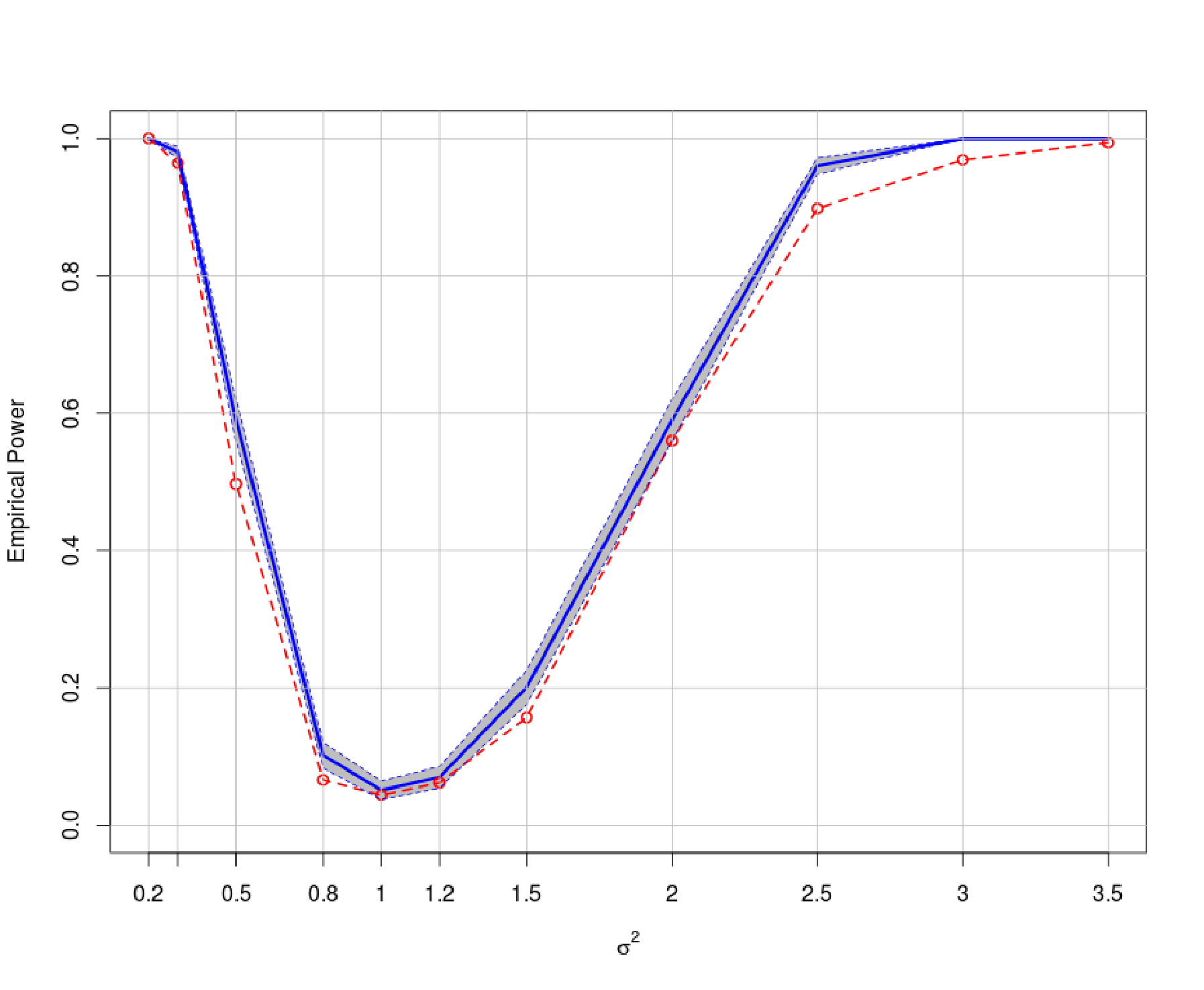}~\includegraphics[width=5cm]{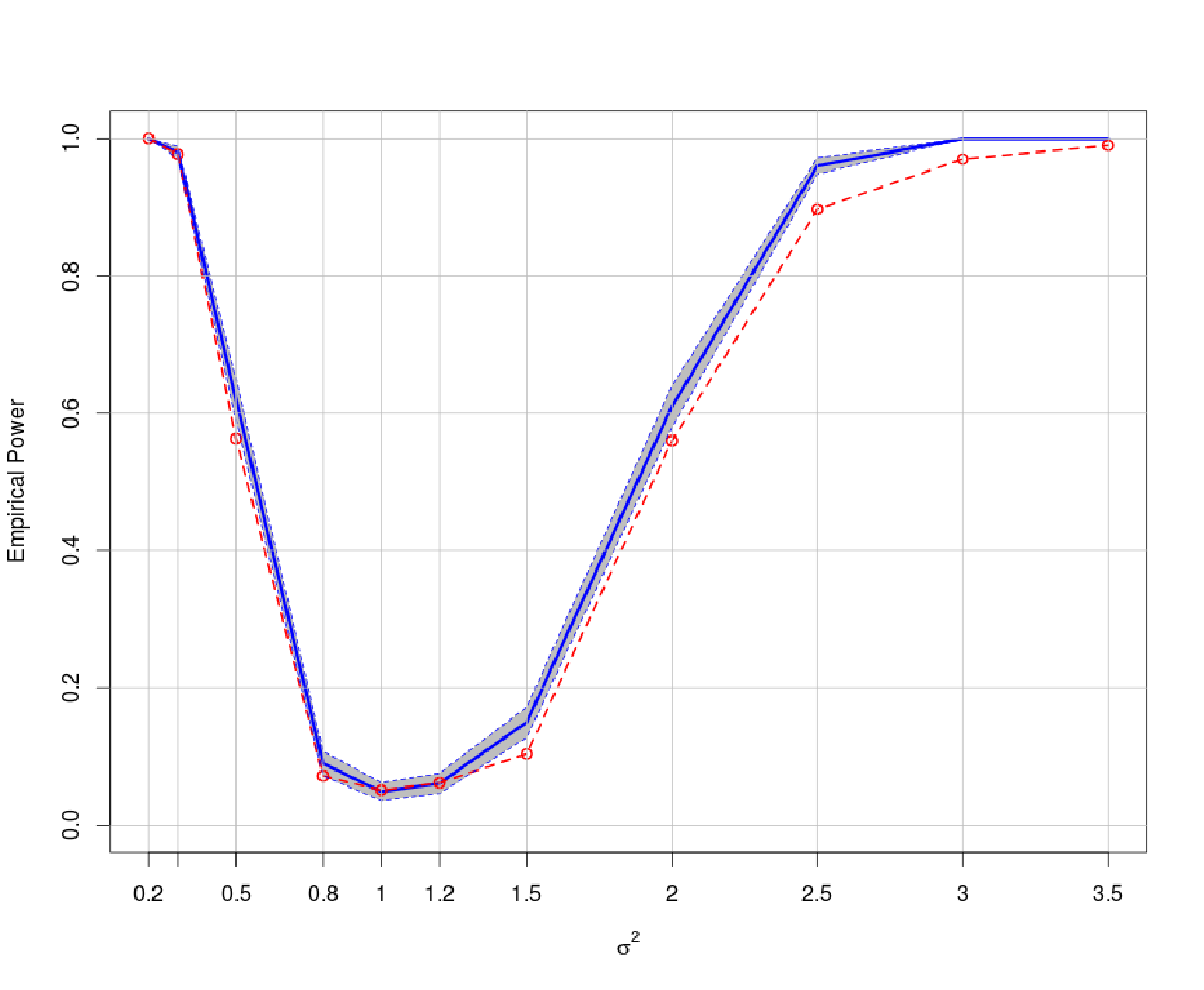}~\includegraphics[width=5cm]{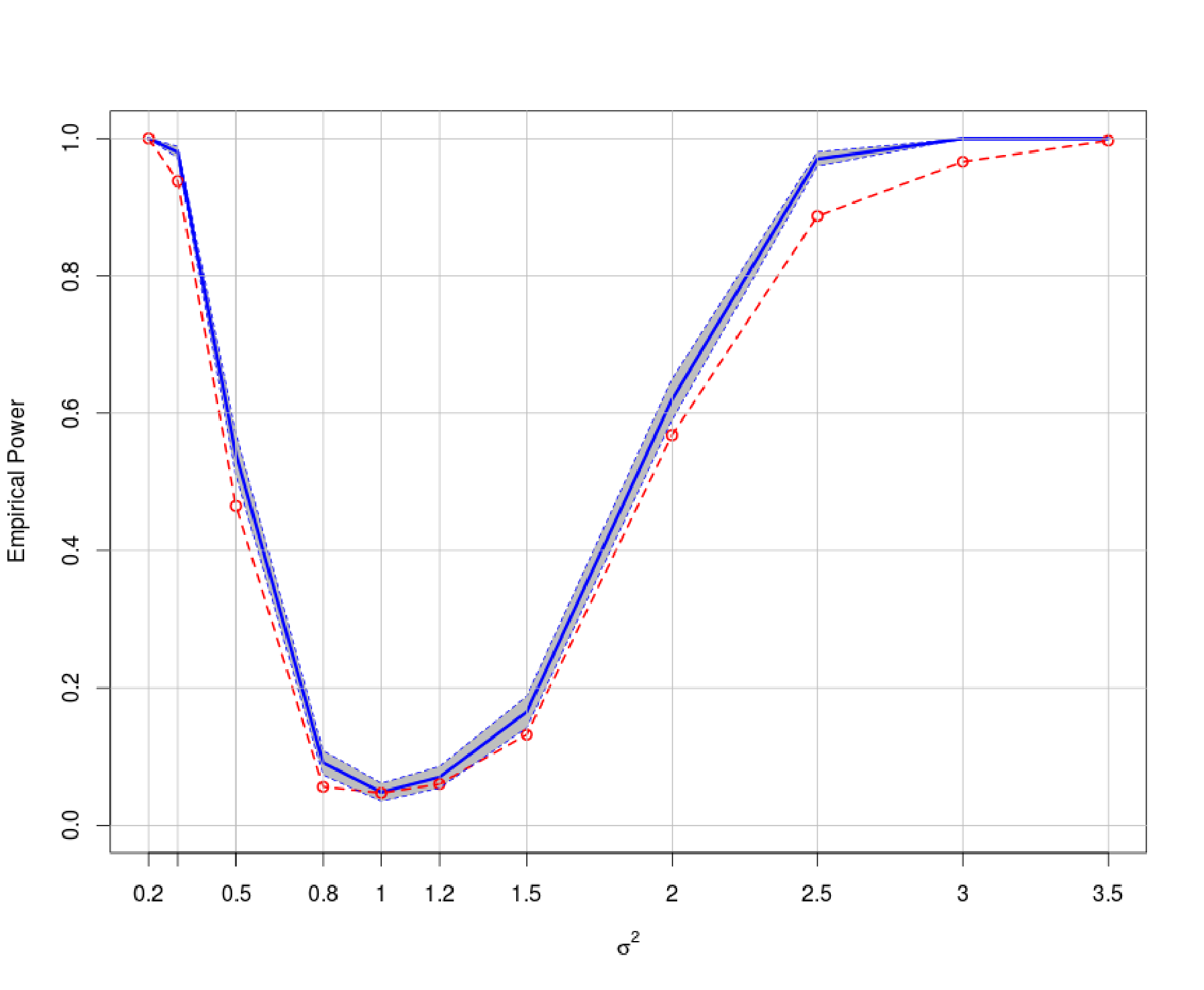}\medskip
\caption{Frequency of rejection of the test for $n=100$, $\dK=\cN(0,1)$, $h_0=0.14$ and $1000$ replications. The alternative $f_0$ are $\cN(0, \sigma^2)$ for some $\sigma^2 \in [ 0.2\,;\,3.5]$ with true value $\sigma^2=1$. Results obtained from M$_0$, M$_1$ and M$_2$ are in the top (from left to right), results obtained from M$_3$, M$_4$ and M$_5$ are in the bottom (from left to right). The dotted line in red corresponds to the Kolmogorov-Smirnov test and the darkened areas are the 95\% confidence intervals.}
\label{FigH1var}
\end{figure}

\begin{figure}[h!]
\centering
\includegraphics[width=12cm]{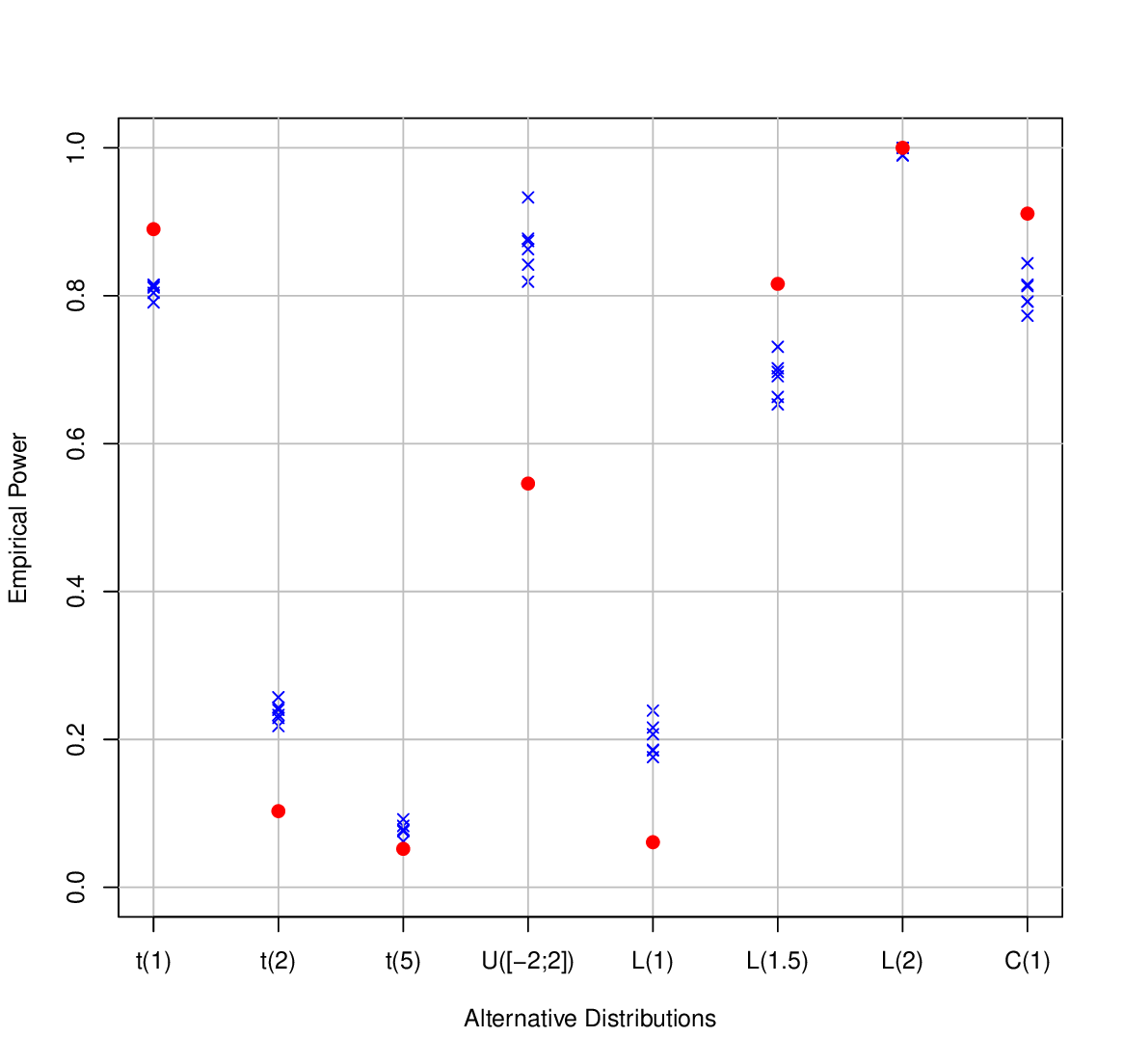}~\medskip
\caption{Frequency of rejection of the test for $n=100$, $\dK=\cN(0,1)$, $h_0=0.14$ and $1000$ replications. The alternative $f_0$ are from left to right the Student $t(1)$, $t(2)$, $t(5)$ distributions, the uniform $\cU([-2\,;\,2])$ distribution, the centered Laplace $\cL(1)$, $\cL(1.5)$ and $\cL(2)$ distributions and the centered Cauchy $\cC(1)$ distribution. Results obtained from M$_0$, $\hdots$, M$_5$ are superimposed as blue crosses. The red circles correspond to the Kolmogorov-Smirnov test for M$_0$.}
\label{FigH1dist}
\end{figure}

\begin{rem}
In the unstable case with $p=1$ and a positive unit root (namely, the random walk), and a kernel satisfying $\int_{\dR} \dK^{\prime}(s)\, \dd s \neq 0$, even if it is of lesser statistical interest, it is also possible to exploit Proposition \ref{PropUnivUnstable} to derive a statistical procedure. Indeed, let
\begin{equation*}
\sigma^2_0 = \int_{\dR} s^2\, f_0(s)\, \dd s \hsp \text{and} \hsp F_0 = \int_{-\delta}^{\delta} f_0(s)\, \dd s
\end{equation*}
with an adjustment of $\sigma^2_0$ if $f_0$ is not centered. Then, we can choose
\begin{equation*}
\widetilde{Z}_{n}^{\, 0} = \frac{h_{n} \big( \widetilde{T}_{n}^{\, 0} - \mu \big) }{\sigma_0\, \sqrt{F_0} \int_{\dR} \dK^{\prime}(s)\, \dd s}
\end{equation*}
and compare it with the quantiles associated with the distribution of
\begin{equation*}
Z = \left( \frac{\frac{1}{2}\, (W^2(1)-1)}{\int_0^1 W^2(u)\, \dd u}\, \int_0^1 W(u)\, \dd u \right)^{\! 2}.
\end{equation*}
\end{rem}

\noindent \textbf{Acknowledgments.} The authors thank Bernard Bercu for all his advices and suggestions during the preparation of this work. They also warmly thank the Associate Editor and the two anonymous Reviewers for their helpful and very constructive comments.

\section*{Appendix. Intermediate results}
\label{SecPrelim}

\setcounter{section}{1}
\setcounter{subsection}{0}
\setcounter{defi}{0}
\renewcommand{\thesection}{\Alph{section}}
\renewcommand{\thesubsection}{\Alph{section}.\arabic{subsection}}

We recall some (already known) preliminary results related to the behavior of $(X_{t})$, depending on the eigenvalues of $C_{\theta}$ and, in each case, the asymptotic behavior of the least-squares estimator. In the sequel, we assume that $\Phi_0$ shares the same assumptions of moments as $(\veps_{t})$. We only focus on results that will be useful in our proofs.

\subsection{Asymptotic behavior of the process}
\label{SecPrelimProcess_1}

\begin{prop}[Stable case]
\label{PropStable}
Assume that $(\veps_{t})$ is a strong white noise such that, for some $\nu \geq 1$, $\dE[ \vert \veps_1 \vert^{\nu}]$ is finite. If $(X_{t})$ satisfies \eqref{AR} and is stable (that is $\vert \lambda_1 \vert < 1$ or, equivalently, $\Theta(z) \neq 0$ for all $\vert z \vert \leq 1$), then as $n$ tends to infinity,
\begin{equation*}
\sum_{t=1}^{n} \vert X_{t} \vert^{\nu} = O(n) \as \hsp \text{and} \hsp \sup_{1\, \leq\, t\, \leq\, n} \vert X_{t} \vert = o(n^{1/\nu}) \as
\end{equation*}
In addition if $\nu \geq 2$, then
\begin{equation*}
\sum_{t=1}^{n} \Phi_{t} = O_{\P}(\sqrt{n}).
\end{equation*}
\end{prop}
\begin{proof}
See \cite[Lem. A.2]{BercuProia13} and \cite[Lem. B.2]{Proia13}.
The last result is deduced from the invariance principle of \cite[Thm. 1]{DedeckerRio00}, since $(X_{t})$ is (asymptotically) stationary in this case. Indeed, 
we clearly have
\begin{equation*}
\frac{1}{\sqrt{n}}\, \sum_{t=1}^{n} \Phi_{t} = \frac{1}{\sqrt{n}}\, \sum_{t=1}^{n} \Phi_{t}^{*} +  \frac{1}{\sqrt{n}}\, \bigg( \sum_{t=1}^{n} C_{\theta}^{\, t} \bigg) ( \Phi_0 - \Phi_0^{*} )
\end{equation*}
where $(\Phi_{t}^{*})$ is the (second-order) stationary version of the process for $\nu \geq 2$. We conclude that $(\Phi_{t})$ and $(\Phi_{t}^{*})$ share the same invariance principle (with rate $\sqrt{n}$), since $\rho(C_{\theta}) < 1$.
\end{proof}

\begin{prop}[Purely explosive case] 
\label{PropExplo}
\cite[Thm. 2]{LaiWei83} Assume that $(\veps_t)$ is a strong white noise having a finite variance $\sigma^2$. If $(X_t)$ satisfies \eqref{AR}--\eqref{VAR} and is purely explosive (that is $\vert \lambda_{p} \vert > 1$ or, equivalently, $\Theta(z) \neq 0$ for all $\vert z \vert \geq 1$), then
\begin{equation*}
\lim_{n \rightarrow +\infty} C_{\theta}^{-n}\, \Phi_{n} = \Phi_0 + \sum_{k=1}^{\infty} C_{\theta}^{-k}\, E_{k} = Z \as
\end{equation*}
and
\begin{equation*}
\lim_{n \rightarrow +\infty} C_{\theta}^{-n} \left(\sum_{t=1}^n \Phi_{t-1} \Phi_{t-1}\tr \right) (C_{\theta}^{-n})\tr = \sum_{k=1}^{\infty} C_{\theta}^{-k}\, Z\: Z\tr (C_{\theta}^{-k})\tr = G \as
\end{equation*}
In addition, $G$ is (a.s.) positive definite.
\end{prop}

\begin{prop}[Purely unstable case]
\label{PropUnstable}
Assume that $(\veps_t)$ is a strong white noise having a finite variance $\sigma^2$. If $(X_t)$ satisfies \eqref{AR}--\eqref{VAR} with $p=1$ and is unstable (that is $\vert \lambda \vert = 1$ or, equivalently, $\Theta(z) \neq 0$ for all $\vert z \vert \neq 1$), then for $k \in \dN$,
\begin{equation}\label{eq:purely1}
\sum_{t=1}^{n} X_{t}^{2\, k} = O_{\P}(n^{k+1}) \hsp \text{and} \hsp \vert X_{n} \vert = O_{\P}(n^{1/2}).
\end{equation}
In addition,
\begin{equation}\label{eq:purely2}
\sum_{t=1}^{n} X_{t} = \left\{
\begin{array}{ll}
O_{\P}(n^{3/2}) & \mbox{for } \lambda=1 \\
O_{\P}(n^{1/2}) & \mbox{for } \lambda=-1.
\end{array}
\right.
\end{equation}
If $(X_t)$ satisfies \eqref{AR}--\eqref{VAR} with $p=s$ and is seasonal unstable (that is $\theta_1 = \hdots = \theta_{s-1} = 0$ and $\theta_{s} = 1$), then \eqref{eq:purely1} and \eqref{eq:purely2} hold with $\lambda=\theta_s=1$.
\end{prop}

These results are detailed in Lemma \ref{LemUnstable1} and then proved.

\subsection{Asymptotic behavior of the least-squares estimator}
\label{SecPrelimProcess_2}

In this subsection, $(\veps_{t})$ is supposed to have a finite moment of order $2+\gamma$, for some $\gamma > 0$. The consistency results have been established in \cite[Thm. 1]{LaiWei83}, while the weak convergences are proved in \cite{BrockwellDavis06}, \cite[Thm. 2]{Stigum74} and \cite[Thm. 3.5.1]{ChanWei88} respectively.

\begin{prop}[Stable case]
\label{PropOLSStable}
In the stable case ($\vert \lambda_1 \vert < 1$), the least-squares estimator $\hTTn$ of $\theta$ is strongly consistent. In addition,
\begin{equation*}
\sqrt n\, (\hTTn - \theta) \cvgl \cN(0, \sigma^2\, \Gamma_{p}^{-1})
\end{equation*}
where $\Gamma_{p}$ is the $p \times p$ asymptotic covariance matrix of $(X_{t})$.
\end{prop}

\begin{prop}[Purely explosive case]
\label{PropOLSExplo}
In the purely explosive case ($\vert \lambda_{p} \vert > 1$), the least-squares estimator $\hTTn$ of $\theta$ is strongly consistent. In addition, 
\begin{equation*}
C_{\theta}^{n}\, (\hTTn - \theta) \cvgl U
\end{equation*}
where $U$ is a nondegenerate random vector given in (1.8)--(1.9)--(1.10) of \cite{Stigum74}.
\end{prop}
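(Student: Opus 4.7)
The starting point is the closed-form expression $\hTTn - \theta = S_n^{-1}\, M_n$, where $S_n := \sum_{t=1}^n \Phi_{t-1}\, \Phi_{t-1}\tr$ and $M_n := \sum_{t=1}^n \Phi_{t-1}\, \veps_t$, obtained by plugging \eqref{AR} into \eqref{OLS}. Strong consistency follows from \cite[Thm.~1]{LaiWei83}: Proposition \ref{PropExplo} ensures that the smallest eigenvalue of $S_n$ blows up at the exponential rate $\|C_\theta^n\|^2$, so the persistence-of-excitation condition of Lai and Wei is trivially satisfied. Alternatively, a direct control combining $\|S_n^{-1}\| = O(\|C_\theta^{-n}\|^2)$ a.s.\ (Proposition \ref{PropExplo}) with the $L^2$-bound $\|M_n\| = O(\|C_\theta^n\|)$ (from the martingale structure of $M_n$ and $\|\Phi_t\| = O(\|C_\theta^t\|)$ a.s.) yields the explicit rate $\|\hTTn - \theta\| = O(\|C_\theta^{-n}\|)$ a.s.

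For the rate of convergence, the algebraic identity $S_n^{-1} = (C_\theta^n)^{-T} \bigl[ C_\theta^{-n} S_n (C_\theta^{-n})\tr \bigr]^{-1} C_\theta^{-n}$ rewrites the normalized error as
\begin{equation*}
(C_\theta^n)\tr (\hTTn - \theta) = \bigl[ C_\theta^{-n} S_n (C_\theta^{-n})\tr \bigr]^{-1} \bigl( C_\theta^{-n} M_n \bigr).
\end{equation*}
Proposition \ref{PropExplo} takes care of the a.s.\ convergence of the first bracket to $G^{-1}$, so everything reduces to the weak behavior of $C_\theta^{-n} M_n$. Setting $R_t := C_\theta^{-(t-1)} \Phi_{t-1}$, Proposition \ref{PropExplo} again gives $R_t \to Z$ a.s.\ as $t \to \infty$, and the reverse-time change of index $j = n - t + 1$ rearranges the sum as
\begin{equation*}
C_\theta^{-n} M_n = \sum_{j=1}^{n} C_\theta^{-j}\, R_{n-j+1}\, \veps_{n-j+1}.
\end{equation*}

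The main obstacle is the convergence in distribution of this triangular-array sum, which I would handle by truncation at a fixed $J \in \dN$. The tail $j > J$ is controlled uniformly in $n$ in $L^2$-norm by $\sigma \bigl( \sum_{j>J} \|C_\theta^{-j}\|^2 \sup_t \|R_t\|_{L^2}^2 \bigr)^{1/2}$, arbitrarily small thanks to the geometric decay of $\|C_\theta^{-j}\|$ (since $\rho(C_\theta^{-1}) < 1$) and to the moment assumption $\dE[|\veps_1|^{2+\gamma}] < \infty$. For the head $j \leq J$, the crucial remark is that $R_{n-j+1}$ is $\sigma(\veps_1, \ldots, \veps_{n-j})$-measurable and converges a.s.\ to $Z$, hence asymptotically independent of the ``future'' reversed block $(\veps_{n-j+1})_{1 \leq j \leq J}$; one deduces the joint weak convergence of $(R_{n-j+1}, \veps_{n-j+1})_{1 \leq j \leq J}$ to $(Z, \widetilde\veps_j)_{1 \leq j \leq J}$, with $(\widetilde\veps_j)_{j \geq 1}$ i.i.d.\ of the law of $\veps_1$ and independent of $Z$. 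Letting $J \to \infty$ identifies the distributional limit of $C_\theta^{-n} M_n$ as the $L^2$-convergent series $\sum_{j=1}^\infty C_\theta^{-j}\, Z\, \widetilde\veps_j$, and Slutsky's lemma then yields $(C_\theta^n)\tr (\hTTn - \theta) \cvgl G^{-1} \sum_{j=1}^\infty C_\theta^{-j}\, Z\, \widetilde\veps_j$, nondegenerate since $G$ is a.s.\ positive definite and $Z$ is a.s.\ nonzero. The formulation in terms of $C_\theta^n$ in lieu of $(C_\theta^n)\tr$ and the explicit description of the nondegenerate limit $U$ are those of (1.8)--(1.10) in \cite{Stigum74}.
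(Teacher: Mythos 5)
The paper itself does not prove this proposition: it is a preliminary result whose proof is delegated to the literature (strong consistency to \cite[Thm.~1]{LaiWei83}, the weak convergence to \cite[Thm.~2]{Stigum74}), so your blind attempt should be measured against the classical argument rather than against anything in the text. Your reconstruction follows exactly that classical route and is essentially sound: the identity $(C_{\theta}^{n})\tr(\hTTn-\theta)=G_{n}^{-1}\,C_{\theta}^{-n}M_{n}$, the a.s.\ convergence $G_{n}\to G$ from Proposition~\ref{PropExplo}, the reverse-time rewriting of $C_{\theta}^{-n}M_{n}$, and the truncation with a geometric $L^{2}$ tail bound (valid because the summands are orthogonal martingale increments and $\sup_{t}\dE\Vert R_{t}\Vert^{2}<+\infty$) are all correct. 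Two steps should be tightened. First, the independence you invoke for the head $j\le J$ is not literally true term by term: $R_{n}$ does depend on $\veps_{n-1},\dots,\veps_{n-J+1}$, which belong to the reversed block. The repair is already contained in your argument: replace every $R_{n-j+1}$, $1\le j\le J$, by $R_{n-J+1}$ (the differences vanish a.s.\ since all converge to $Z$), then use the \emph{exact} independence of $R_{n-J+1}$ from $(\veps_{n-J+1},\dots,\veps_{n})$ and the resulting product-measure structure to obtain the joint limit $(Z,\widetilde{\veps}_{1},\dots,\widetilde{\veps}_{J})$ with $Z$ independent of the block. Second, the final appeal to Slutsky is not immediate because the limit $G$ is random; it goes through only because $G$ is a measurable function of $Z$, so the same truncation yields the \emph{joint} convergence of $(G_{n},C_{\theta}^{-n}M_{n})$, after which $G_{n}^{-1}-G^{-1}\to 0$ a.s.\ absorbs the remainder. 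Minor further points: the smallest eigenvalue of $S_{n}$ grows like $\vert\lambda_{p}\vert^{2n}$ (up to polynomial factors) rather than $\Vert C_{\theta}^{n}\Vert^{2}$, though the Lai--Wei excitation condition is satisfied either way; the identification of your limit $G^{-1}\sum_{j\ge1}C_{\theta}^{-j}Z\,\widetilde{\veps}_{j}$ with the $U$ of (1.8)--(1.10) in \cite{Stigum74} is asserted rather than checked; and the normalization you actually control is $(C_{\theta}^{n})\tr(\hTTn-\theta)$, the statement's $C_{\theta}^{n}$ being an abuse of notation rather than an error on your side. With these repairs your proof is a legitimate self-contained derivation of the cited result.
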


\begin{prop}[Purely unstable case]
\label{PropOLSUnstable}
In the purely unstable case ($\vert \lambda_1 \vert = \vert \lambda_{p} \vert = 1$), the least-squares estimator $\hTTn$ of $\theta$ is strongly consistent. In the univariate case ($p=1$), we have in addition
\begin{equation*}
n\, (\hTTn - \theta) \cvgl \textnormal{sgn}(\theta)\, \frac{\frac{1}{2}\, (W^2(1)-1)}{\int_0^1 W^2(u)\, \dd u}
\end{equation*}
where $(W(t),\, t \in [0,1])$ is a standard Wiener process and $\textnormal{sgn}(\theta)$ stands for the sign of $\theta$. 
In the seasonal case ($p = s$), since $\Theta(z) = 0$ is an equation admitting the complex $s$--th roots of unity as solutions,
\begin{equation*}
n\, (\hTTn - \theta) \cvgl S(W_{s})
\end{equation*}
where $S(W_{s})$ is a functional of a standard Wiener process $(W_{s}(t),\, t \in [0,1])$ of dimension $s$ that can be explicitly determined following \cite[Thm. 3.5.1]{ChanWei88}.
\end{prop}

\section*{Appendix. Proofs of the main results}
\label{SecProof}

\setcounter{section}{2}
\setcounter{subsection}{0}

In this section, we prove our results.

\subsection{Stable case - Proof of Theorem \ref{ThmStable}}
\label{SecProofStable}

We consider the case where the kernel satisfies hypothesis  \ref{hyp:H1}. The difference between the statistics can be expressed like
\begin{equation}
\label{DecompStable}
\widehat{T}_{n} - T_{n} = n\, h_{n}\, \int_{\dR} \big( \widehat{f}_{n}(x) - f_{n}(x) \big)^2 a(x)\, \dd x + R_{n}
\end{equation}
with 
\begin{equation*}
R_{n} = 2\, n\, h_{n}\, \int_{\dR} \big( \widehat{f}_{n}(x) - f_{n}(x) \big)\, \big( f_{n}(x) - (\dK_{h_{n}}*f)(x) \big)\, a(x)\, \dd x
\end{equation*}
and $f_n$ is the analog of $\widehat f_n$ defined in \eqref{PREst} where the residuals $\widehat \varepsilon$ have been replaced by the strong white noise $\varepsilon$. Recall that $T_n$ has been defined before \eqref{AsNormBR}. Now we follow an idea illustrated in the proof of \cite[Thm. 1.1]{HorvathZitikis04} and in that of \cite[Thm. 2.1]{LeeNa02}, which consists in using Taylor expansion to get, for all $1 \leq t \leq n$,
\begin{equation*}
\dK\left( \frac{x-\widehat{\veps}_{t}}{h_{n}} \right) - \dK\left( \frac{x-\veps_{t}}{h_{n}} \right) = \frac{\veps_{t} - \widehat{\veps}_{t}}{h_{n}}\, \dK^{\prime}\left( \frac{x-\veps_{t}}{h_{n}} \right) + \frac{(\veps_{t} - \widehat{\veps}_{t})^2}{2\, h_{n}^2}\, \dK^{\prime \prime}\left( \Delta_{t x}\right)
\end{equation*}
where
\begin{equation*}
\Delta_{t x} = \frac{ x - \veps_{t} + \zeta (\veps_{t} - \widehat{\veps}_{t})}{h_{n}}
\end{equation*}
for some $0 < \zeta < 1$. Note that 
\begin{equation*}
\veps_{t} - \widehat{\veps}_{t} = (\hTTn - \theta)\tr \Phi_{t-1} = \langle \hTTn - \theta, \Phi_{t-1} \rangle
\end{equation*}
and obviously that
\begin{equation*}
\big\vert \langle \hTTn - \theta, \Phi_{t-1} \rangle \big\vert \leq \Vert \hTTn - \theta \Vert\, \Vert \Phi_{t-1} \Vert.
\end{equation*}
On the one hand, we consider the first term (say, $I_{n}$) of the right-hand side of \eqref{DecompStable} that can be bounded like
\begin{eqnarray}
I_{n} & = & n\, h_{n}\, \int_{\dR} \left\{ \frac{1}{n\, h_{n}}\, \sum_{t=1}^{n} \left( \dK\left( \frac{x-\widehat{\veps}_{t}}{h_{n}} \right) - \dK\left( \frac{x-\veps_{t}}{h_{n}} \right) \right) \right\}^{\! 2} a(x)\, \dd x \nonumber \\
 & \leq & \frac{2}{n\, h_{n}^3}\, \int_{\dR} \left\{ \sum_{t=1}^{n} \langle \hTTn - \theta, \Phi_{t-1} \rangle\, \dK^{\prime}\left( \frac{x-\veps_{t}}{h_{n}} \right) \right\}^{\! 2} a(x)\, \dd x \nonumber \\
 & & \hsp \hsp + ~ \frac{1}{2\, n\, h_{n}^5}\, \int_{\dR} \left\{ \sum_{t=1}^{n}  \langle \hTTn - \theta, \Phi_{t-1} \rangle ^2\, \dK^{\prime \prime}\left( \Delta_{t x} \right) \right\}^{\! 2} a(x)\, \dd x. \label{DecompStable2T}
\end{eqnarray}
At this step, we need two technical lemmas.
\begin{lem}
\label{LemStable1}
We have
\begin{equation*}
I_{1,n} = \int_{\dR} \left\{ \sum_{t=1}^{n} \langle \hTTn - \theta, \Phi_{t-1} \rangle\, \dK^{\prime}\left( \frac{x-\veps_{t}}{h_{n}} \right) \right\}^{\! 2} a(x)\, \dd x = O_{\P}(h_{n}).
\end{equation*}
\end{lem}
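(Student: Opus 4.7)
The strategy is to factor the estimation error out of the sum by Cauchy-Schwarz and then control the remaining random integral via a martingale-type decomposition. Writing
\begin{equation*}
\sum_{t=1}^n \langle \hTTn - \theta, \Phi_{t-1}\rangle\, \dK'\!\left(\frac{x-\veps_t}{h_n}\right) = \langle \hTTn - \theta, V_n(x)\rangle, \qquad V_n(x) := \sum_{t=1}^n \Phi_{t-1}\, \dK'\!\left(\frac{x-\veps_t}{h_n}\right),
\end{equation*}
Cauchy-Schwarz in $\dR^p$ yields $I_{1,n} \leq \Vert \hTTn - \theta\Vert^2 \int_{\dR} \Vert V_n(x)\Vert^2 a(x)\, \dd x$. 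Since Proposition \ref{PropOLSStable} provides $\Vert \hTTn - \theta\Vert^2 = O_{\P}(n^{-1})$, the task reduces to proving that $\int_{\dR} \Vert V_n(x)\Vert^2 a(x)\, \dd x = O_{\P}(n h_n)$.

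To control this integral, I would split $V_n(x) = M_n(x) + B_n(x)$, using that $\veps_t$ is independent of the past $\sigma$-algebra to which $\Phi_{t-1}$ is adapted, via
\begin{equation*}
M_n(x) = \sum_{t=1}^n \Phi_{t-1}\bigl\{ \dK'((x-\veps_t)/h_n) - \varphi_n(x) \bigr\}, \qquad B_n(x) = \varphi_n(x)\sum_{t=1}^n \Phi_{t-1},
\end{equation*}
where $\varphi_n(x) := \E[\dK'((x-\veps_1)/h_n)] = h_n \int_{\dR} \dK'(v)\, f(x-h_n v)\, \dd v$ does not depend on $t$. For each fixed $x$, $(M_n(x))_n$ is the partial sum of a vector-valued martingale difference sequence with respect to the natural filtration of $(\veps_t)$, so $L^2$-orthogonality gives $\E[\Vert M_n(x)\Vert^2] = \sum_{t=1}^n \E[\Vert \Phi_{t-1}\Vert^2]\, \Var(\dK'((x-\veps_t)/h_n))$.

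For the martingale contribution, the change of variables $u = x - h_n v$ and the boundedness of $f$ in (A$_0$) yield $\Var(\dK'((x-\veps_t)/h_n)) \leq h_n \Vert f\Vert_\infty \int_{\dR}(\dK'(v))^2\, \dd v$ uniformly in $x$, while stability and $\E \veps_1^2 < \infty$ provide a uniform bound on $\E[\Vert \Phi_{t-1}\Vert^2]$. Swapping expectation and spatial integral by Fubini and invoking Markov's inequality then gives $\int_{\dR} \Vert M_n(x)\Vert^2 a(x)\, \dd x = O_{\P}(n h_n)$. For the bias, the elementary estimate $|\varphi_n(x)| \leq h_n \Vert f\Vert_\infty \int_{\dR} |\dK'(v)|\, \dd v$ (finite under (A$_1$)) combined with $\sum_{t=1}^n \Phi_{t-1} = O_{\P}(\sqrt{n})$ from Proposition \ref{PropStable} yields $\int_{\dR} \Vert B_n(x)\Vert^2 a(x)\, \dd x = O_{\P}(n h_n^2)$, which is asymptotically negligible. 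The inequality $\Vert V_n\Vert^2 \leq 2\Vert M_n\Vert^2 + 2\Vert B_n\Vert^2$ closes the argument.

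The main subtlety is the martingale identity used in the second step: although $x$ is a dummy variable, the $L^2(\Omega)$-orthogonality must hold pointwise in $x$ before one can integrate with respect to $a(x)\,\dd x$. This is secured by the independence of $\veps_t$ from $(\Phi_{t-1}, \veps_1, \ldots, \veps_{t-1})$, which turns each summand of $M_n(x)$ into a conditional-mean-zero random vector simultaneously for every $x$. Once this is organised, the rest is a routine combination of a change of variables in $\dK'$, the finite second moment of $\Phi_{t-1}$ in the stable regime, and the $\sqrt{n}$ rate for $\sum_t \Phi_{t-1}$ recalled in Proposition \ref{PropStable}.
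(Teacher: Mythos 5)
Your proposal is correct and follows essentially the same route as the paper: a centering/bias split of $\dK^{\prime}((x-\veps_{t})/h_{n})$ into its conditional-mean-zero part (handled by martingale $L^2$-orthogonality, the uniform $O(h_{n})$ bound on the variance after the change of variables, and $\E[\Vert\Phi_{t-1}\Vert^2]=O(1)$) and its mean part (handled by the $O_{\P}(\sqrt{n})$ rate for $\sum_{t}\Phi_{t-1}$ from Proposition \ref{PropStable} together with the $O_{\P}(n^{-1/2})$ rate of the estimator). The only difference is cosmetic — you apply Cauchy--Schwarz to the full sum before splitting, whereas the paper splits into $J_{1,n}+J_{2,n}$ first — and the resulting rates $O_{\P}(h_{n})$ and $O_{\P}(h_{n}^2)$ for the two pieces coincide with the paper's.
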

\begin{proof}
One clearly has
\begin{eqnarray}
I_{1,n} & \leq & 2\, \Bigg\{ \int_{\dR} \left( \sum_{t=1}^{n} \langle \hTTn - \theta, \Phi_{t-1} \rangle\, v_{t}(x) \right)^{\! 2} a(x)\, \dd x \nonumber \\
 & & \hsp \hsp + ~ \int_{\dR} \left( \sum_{t=1}^{n} \langle \hTTn - \theta, \Phi_{t-1} \rangle\, e(x) \right)^{\! 2} a(x)\, \dd x \Bigg\} ~ = ~ 2\, ( J_{1,n} + J_{2,n} ) \label{DecompStableIn1}
\end{eqnarray}
where
\begin{equation*}
v_{t}(x) = \dK^{\prime}\left( \frac{x-\veps_{t}}{h_{n}} \right) - \dE\left[ \dK^{\prime}\left( \frac{x-\veps_1}{h_{n}} \right) \right] \hsp \text{and} \hsp e(x) = \dE\left[ \dK^{\prime}\left( \frac{x-\veps_1}{h_{n}} \right) \right].
\end{equation*}
Now let us consider $J_{1,n}$ and $J_{2,n}$. First,
\begin{eqnarray}
J_{1,n} & \leq & \Vert\, \hTTn - \theta \Vert^2 \int_{\dR} \bigg\Vert \sum_{t=1}^{n} \Phi_{t-1}\, v_{t}(x) \bigg\Vert^2 a(x)\, \dd x \nonumber \\
 & = & \Vert \hTTn - \theta \Vert^2\, \sum_{i=1}^{p} \int_{\dR} \left( \sum_{t=1}^{n}  X_{t-i}\, v_{t}(x) \right)^{\! 2} a(x)\, \dd x. \label{DecompStableJ1}
\end{eqnarray}
Let $i \in \{1, \hdots, p\}$. Then,
\begin{eqnarray*}
\dE\left[ \int_{\dR} \left( \sum_{t=1}^{n}  X_{t-i}\, v_{t}(x) \right)^{\! 2} a(x)\, \dd x \right] & = & \int_{\dR} \dE\left[ \left( \sum_{t=1}^{n} X_{t-i}\, v_{t}(x) \right)^{\! 2} \right]\, a(x)\, \dd x \\
 & = & \int_{\dR} \sum_{t=1}^{n} \dE\big[ X_{t-i}^{\, 2} \big]\, \dE\big[ v_{t}^{\, 2}(x) \big] a(x)\, \dd x \\
 & = & \sum_{t=1}^{n} \dE\big[ X_{t-i}^{\, 2} \big]\, \int_{\dR} \dE\big[ v_{t}^{\, 2}(x) \big]\, a(x)\, \dd x.
\end{eqnarray*}
But, under our assumptions, we recall that $\dE[X_{n}^{\, 2}] = O(1)$ from the asymptotic stationarity of the process and
\begin{eqnarray*}
\int_{\dR} \dE\big[ v_{t}^{\, 2}(x) \big]\, a(x)\, \dd x & \leq & \int_{\dR} \dE\left[ \left( \dK^{\prime}\left( \frac{x-\veps_1}{h_{n}} \right) \right)^{\! 2} \right] a(x)\, \dd x \\
 & = & h_{n} \int_{\dR}  (\dK^{\prime}(z))^2\, \int_{\dR} f(x - h_{n}\, z)\, a(x)\, \dd x\, \dd z ~ = ~ O(h_{n})
\end{eqnarray*}
since $\Var(Z)\leq \E[Z^2]$ for some random variable $Z$ and by  \ref{hyp:H0} and  \ref{hyp:H1}. Thus $J_{1,n} = O_{\P}(h_{n})$ \textit{via} Proposition \ref{PropOLSStable}. Then, by a direct calculation,
\begin{eqnarray}
J_{2,n} & = & \left( (\hTTn - \theta )\tr\, \sum_{t=1}^{n} \Phi_{t-1} \right)^{\! 2} \int_{\dR} e^{\, 2}(x)\, a(x)\, \dd x \nonumber \\
 & = & h_{n}^2 \left( (\hTTn - \theta )\tr\, \sum_{t=1}^{n} \Phi_{t-1} \right)^{\! 2} \int_{\dR} \left( \int_{\dR} \dK^{\prime}(z)\, f(x - h_{n}\, z)\, \dd z \right)^{\! 2} a(x)\, \dd x. \label{DecompStableJ2}
\end{eqnarray}
Using Proposition \ref{PropStable}, it follows that $J_{2,n} = O_{\P}(h_{n}^2)$.
\end{proof}

\begin{lem}
\label{LemStable2}
We have
\begin{equation*}
I_{2,n} = \int_{\dR} \left\{ \sum_{t=1}^{n}  \langle \hTTn - \theta, \Phi_{t-1} \rangle ^2\, \dK^{\prime \prime}\left( \Delta_{t x} \right) \right\}^{\! 2} a(x)\, \dd x = O_{\P}\!\left( h_{n}^2 + \frac{1}{n\, h_{n}^2} \right).
\end{equation*}
\end{lem}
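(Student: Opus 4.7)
My plan is to disentangle the random argument $\Delta_{tx}$ from the anchored argument $(x-\veps_t)/h_n$, since only the latter enjoys useful independence with respect to the natural filtration $\cF_{t-1} = \sigma(\veps_s,\,s\leq t-1,\Phi_0)$. Writing
\begin{equation*}
\dK''(\Delta_{tx}) = \dK''\!\left(\tfrac{x-\veps_t}{h_{n}}\right) + r_{tx},
\end{equation*}
and setting $\delta_t = \langle \hTTn-\theta, \Phi_{t-1}\rangle$, the inequality $(u+v)^2 \leq 2(u^2+v^2)$ reduces the problem to $I_{2,n} \leq 2(A_n + B_n)$, where $A_n$ and $B_n$ carry respectively the anchored term and the remainder $r_{tx}$. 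The mean-value theorem applied to $\dK''$, differentiable with bounded derivative $\dK'''$ by (A$_1$), gives $|r_{tx}| \leq \|\dK'''\|_\infty |\delta_t|/h_{n}$ uniformly in $(t,x)$. Using Cauchy-Schwarz, $\Big(\sum_t|\delta_t|^3\Big)^{\!2} \leq \Big(\sum_t\delta_t^2\Big)\Big(\sum_t\delta_t^4\Big)$, the pointwise bound $\delta_t^{2k}\leq \|\hTTn-\theta\|^{2k}\|\Phi_{t-1}\|^{2k}$, together with $\|\hTTn-\theta\|^2 = O_{\P}(1/n)$ from Proposition \ref{PropOLSStable} and $\sum_t \|\Phi_{t-1}\|^{2k} = O(n)$ for $k\in\{1,2\}$ from Proposition \ref{PropStable} under (A$_4$) with $\nu = 4$, I get $\sum_t\delta_t^2 = O_{\P}(1)$, $\sum_t\delta_t^4 = O_{\P}(1/n)$, hence $B_n = O_{\P}(1/(nh_n^2))$.

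\textbf{The main part $A_n$.} I would rewrite $\delta_t^2 = (\hTTn-\theta)\tr \Phi_{t-1}\Phi_{t-1}\tr(\hTTn-\theta)$ and factor out the estimation error as
\begin{equation*}
A_n \leq \|\hTTn-\theta\|^4 \sum_{i,j=1}^{p} \int_{\dR} \Big(\sum_{t=1}^n X_{t-i}\, X_{t-j}\, \dK''\!\left(\tfrac{x-\veps_t}{h_{n}}\right)\Big)^{\!2} a(x)\,\dd x.
\end{equation*}
Expanding the inner squared sum and passing to expectation, the key point is that, for $i,j\geq 1$, the quantity $X_{t-i}X_{t-j}$ is $\cF_{t-1}$-measurable and is therefore independent of $\dK''((x-\veps_t)/h_n)$. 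Diagonal terms $t=s$ sum to $O(nh_n)$ after integration against $a$, via the classical kernel-scale bound $\int \dK''((x-c)/h_n)^2 a(x)\, \dd x \leq \|a\|_\infty \|\dK''\|_2^2\, h_n$. For cross terms $t\neq s$, a conditioning factors out
\begin{equation*}
e^{(2)}(x) := \dE\!\left[\dK''\!\left(\tfrac{x-\veps_1}{h_{n}}\right)\right] = h_n^3 \int_{\dR} \dK(u)\, f''(x-h_{n} u)\, \dd u = O(h_n^3)
\end{equation*}
uniformly in $x$, the two integrations by parts being legitimate thanks to the decay of $\dK$ and $\dK'$ at infinity and to the boundedness of $f''$ under (A$_0$). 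Each cross term is then $O(h_n^3)\cdot O(h_n) = O(h_n^4)$ after integration in $x$, and the $O(n^2)$ such pairs produce $O(n^2 h_n^4)$. Summing diagonal and cross, the inner integral is $O_{\P}(nh_n + n^2 h_n^4)$, and as $\|\hTTn-\theta\|^4 = O_{\P}(1/n^2)$, I finally obtain $A_n = O_{\P}(h_n/n + h_n^4) = O_{\P}(h_n^2)$ under the bandwidth constraints of (A$_3$).

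\textbf{Main difficulty.} The delicate step is the sharpening of $A_n$. A direct Cauchy-Schwarz $\big(\sum_t\delta_t^2\,\dK''(\cdot)\big)^{\!2} \leq \big(\sum_t\delta_t^2\big)\big(\sum_t \delta_t^2\, \dK''^{2}(\cdot)\big)$ only yields $A_n = O_{\P}(h_n)$, which when plugged back into the $1/(nh_n^5)$ prefactor of \eqref{DecompStable2T} would restrict the bandwidth exponent to $\kappa < 2/9$, incompatible with the target $\kappa < 1/4$ of Theorem \ref{ThmStable}. The independence-based expansion just described, together with the sharp $O(h_n^3)$ decay of $e^{(2)}$ arising from the two integrations by parts (which crucially use the boundedness of $f''$), is the essential refinement to bring the main term down to the required $O_{\P}(h_n^2)$.
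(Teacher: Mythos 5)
Your proof is correct and follows the same skeleton as the paper's: isolate the mean-value remainder using the boundedness of $\dK^{\prime\prime\prime}$ (your $B_n$ is the paper's $K_{1,n}$, with the same outcome $O_{\P}(1/(n\,h_{n}^2))$ after the prefactor), then exploit the independence of $\Phi_{t-1}$ and $\veps_{t}$ to control the anchored sum, separating a variance-type contribution of order $n\,h_{n}$ from a mean-type contribution driven by $e^{(2)}(x)=\dE[\dK^{\prime\prime}((x-\veps_1)/h_{n})]$; your diagonal/cross split of the expanded square is the paper's centering decomposition into $K_{2,n}+K_{3,n}$ in a different guise, and your Frobenius-norm reduction to the entries $X_{t-i}X_{t-j}$ matches the paper's cruder bound $\delta_t^2\leq\Vert\hTTn-\theta\Vert^2\Vert\Phi_{t-1}\Vert^2$. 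The one place you genuinely diverge is the mean term, and there your diagnosis of the ``essential refinement'' is off: the paper does not integrate by parts at all. It simply uses $\vert e^{(2)}(x)\vert\leq h_{n}\int\vert\dK^{\prime\prime}(z)\vert f(x-h_{n}z)\,\dd z=O(h_{n})$, pulls out $(\sum_{t}\Vert\Phi_{t-1}\Vert^2)^2=O_{\P}(n^2)$ deterministically, and lets the prefactor $\Vert\hTTn-\theta\Vert^4=O_{\P}(n^{-2})$ cancel the $n^2$ exactly, leaving $O_{\P}(h_{n}^2)$ — no smoothness of $f$ beyond boundedness and no decay of $\dK$, $\dK^{\prime}$ at infinity are needed for this term. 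Your double integration by parts giving $e^{(2)}(x)=O(h_{n}^3)$ is legitimate under (A$_0$)–(A$_1$) and yields the sharper $A_n=O_{\P}(h_{n}/n+h_{n}^4)$, but this extra precision buys nothing downstream since the binding constraint in \eqref{CvgInStable} comes from the $1/(n\,h_{n}^2)$ term in any case; it also quietly relies on $\Vert a\Vert_{\infty}<+\infty$ when you gain the extra factor $O(h_{n})$ from $\int\vert\dK^{\prime\prime}((x-c)/h_{n})\vert\,a(x)\,\dd x$, whereas the cruder per-pair bound $\Vert\dK^{\prime\prime}\Vert_{\infty}\,\Vert e^{(2)}\Vert_{\infty}\,\Vert a\Vert_{1}=O(h_{n}^3)$ already suffices.
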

\begin{proof}
We directly get
\begin{eqnarray}
I_{2,n} & \leq & \Vert \hTTn - \theta \Vert^4\, \int_{\dR} \left\{ \sum_{t=1}^{n} \Vert \Phi_{t-1} \Vert^2\, \dK^{\prime \prime}\left( \Delta_{t x} \right) \right\}^{\! 2} a(x)\, \dd x \nonumber \\
 & \leq & 3\, \Vert \hTTn - \theta \Vert^4\, \Bigg( \int_{\dR} \left\{ \sum_{t=1}^{n} \Vert \Phi_{t-1} \Vert^2\, \left( \dK^{\prime \prime}\left( \Delta_{t x} \right) - \dK^{\prime \prime}\left( \frac{x - \veps_{t}}{h_{n}} \right) \right) \right\}^{\! 2} a(x)\, \dd x \nonumber \\
 & & \hsp \hsp + ~ \int_{\dR} \left\{ \sum_{t=1}^{n} \Vert \Phi_{t-1} \Vert^2\, \left( \dK^{\prime \prime}\left( \frac{x - \veps_{t}}{h_{n}} \right) - \dE\left[ \dK^{\prime \prime}\left( \frac{x - \veps_1}{h_{n}} \right) \right] \right) \right\}^{\! 2} a(x)\, \dd x \nonumber \\
 & & \hsp \hsp + ~ \int_{\dR} \left\{ \sum_{t=1}^{n} \Vert \Phi_{t-1} \Vert^2\, \dE\left[ \dK^{\prime \prime}\left( \frac{x - \veps_1}{h_{n}} \right) \right] \right\}^{\! 2} a(x)\, \dd x \Bigg) \nonumber \\
 & = & 3\, \Vert \hTTn - \theta \Vert^4\, ( K_{1,n} + K_{2,n} + K_{3,n} ). \label{DecompStableIn2}
\end{eqnarray}
From the mean value theorem, under our hypotheses,
\begin{eqnarray*}
\dK^{\prime \prime}\left( \Delta_{t x} \right) - \dK^{\prime \prime}\left( \frac{x - \veps_{t}}{h_{n}} \right) & = & \dK^{\prime \prime}\left( \frac{ x - \veps_{t} + \zeta (\veps_{t} - \widehat{\veps}_{t})}{h_{n}} \right) - \dK^{\prime \prime}\left( \frac{x - \veps_{t}}{h_{n}} \right) \\
 & = & \frac{\zeta(\veps_{t} - \widehat{\veps}_{t})}{h_{n}}\, \dK^{\prime \prime \prime}\!\left(\frac{x - \veps_{t} + \xi\, \zeta (\veps_{t} - \widehat{\veps}_{t})}{h_{n}} \right)
\end{eqnarray*}
for some $0 < \zeta, \xi < 1$. We deduce that
\begin{equation*}
\left\vert\dK^{\prime \prime}\left( \Delta_{t x} \right) - \dK^{\prime \prime}\left( \frac{x - \veps_{t}}{h_{n}} \right) \right\vert \leq \frac{\vert \veps_{t}-\widehat{\veps}_{t} \vert}{h_{n}}\, \left\vert \dK^{\prime \prime \prime}\!\left(\frac{x - \veps_{t} + \xi\, \zeta (\veps_{t} - \widehat{\veps}_{t})}{h_{n}} \right) \right\vert.
\end{equation*}
Consequently, since $\dK^{\prime \prime \prime}$ is bounded,
\begin{eqnarray}
K_{1,n} ~ \leq ~ \frac{C}{h_{n}^2}\, \left( \sum_{t=1}^{n} \Vert \Phi_{t-1} \Vert^2\, \vert \veps_{t} - \widehat{\veps}_{t} \vert \right)^{\! 2} & \leq & \frac{C}{h_{n}^2}\, \sum_{t=1}^{n} \Vert \Phi_{t-1} \Vert^4\, \sum_{t=1}^{n} (\veps_{t} - \widehat{\veps}_{t} )^2 \nonumber \\
 & \leq & \frac{C}{h_{n}^2}\, \Vert \hTTn - \theta \Vert^2\, \sum_{t=1}^{n} \Vert \Phi_{t-1} \Vert^4\, \sum_{t=1}^{n} \Vert \Phi_{t-1} \Vert^2 \label{DecompStableK1}
\end{eqnarray}
for some constants. Then, $K_{1,n} = O_{\P}(n\, h_{n}^{-2})$ as soon as we suppose that $(\veps_{t})$ has a finite moment of order $\nu=4$ (by virtue of Proposition \ref{PropStable}). Now we proceed as for $J_{1,n}$ to get
\begin{eqnarray}
\dE[K_{2,n}] & \leq & \sum_{t=1}^{n} \dE\big[ \Vert \Phi_{t-1} \Vert^4 \big]\, \int_{\dR} \dE\left[  \dK^{\prime \prime} \left( \frac{x - \veps_1}{h_{n}} \right) ^{\! 2} \right] a(x)\, \dd x \nonumber \\
 & = & h_{n}\, \sum_{t=1}^{n} \dE\big[ \Vert \Phi_{t-1} \Vert^4 \big]\, \int_{\dR}  (\dK^{\prime \prime}(z))^2\, \int_{\dR} f(x - h_{n}\, z)\, a(x)\, \dd x\, \dd z ~ = ~ O(n\, h_{n}) \label{DecompStableK2}
\end{eqnarray}
which shows $K_{2,n} = O_{\P}(n\, h_{n})$, since $\dE\big[ X_{n}^{\, 4} \big] = O(1)$ under the hypotheses of stability and fourth-order moments. Finally,
\begin{eqnarray}
K_{3,n} & \leq & h_{n}^2\, \int_{\dR} \left( \int_{\dR} \vert \dK^{\prime \prime}(z) \vert\, f(x - h_{n}\, z)\, \dd z \right)^2 a(x)\, \dd x\, \left( \sum_{t=1}^{n} \Vert \Phi_{t-1} \Vert^2 \right)^{\! 2} \nonumber \\
 & \leq & C\, h_{n}^2\, \left( \sum_{t=1}^{n} \Vert \Phi_{t-1} \Vert^2 \right)^{\! 2} ~ = ~ O_{\P}(n^2\, h_{n}^2) \label{DecompStableK3}
\end{eqnarray}
for a constant $C$. Whence we deduce that $I_{2,n} = O_{\P}(h_{n}^2 + n^{-1} h_{n}^{-2})$.
\end{proof}
We are now ready to conclude the proof of Theorem \ref{ThmStable}. If we come back to $I_{n}$ in \eqref{DecompStable2T}, then the combination of Lemmas \ref{LemStable1} and \ref{LemStable2} leads to 
\begin{equation}
\label{CvgInStable}
\frac{I_{n}}{\sqrt{h_{n}}} = O_{\P}\!\left( \frac{1}{n\, h_{n}^{5/2}} + \frac{1}{n\, h_{n}^{7/2}} + \frac{1}{n^2\, h_{n}^{15/2}} \right) = o_{\P}(1)
\end{equation}
as soon as $n\, h_{n}^{15/4} \rightarrow +\infty$. Now by Cauchy-Schwarz inequality,
\begin{equation}
\label{CvgRnStable}
\frac{\vert R_{n} \vert}{\sqrt{h_{n}}} \leq 2\, \sqrt{\frac{I_{n}\, T_{n}}{h_{n}}} = O_{\P}\bigg( \sqrt{\frac{I_{n}}{h_{n}}} \bigg) = o_{\P}(1)
\end{equation}
from the previous reasoning, the asymptotic normality \eqref{AsNormBR} and Assumption  \ref{hyp:Halpha} with $\alpha=4$. It follows from \eqref{DecompStable}, \eqref{CvgInStable} and \eqref{CvgRnStable} that
\begin{equation*}
\frac{\widehat{T}_{n} - \mu}{\sqrt{h_{n}}} = \frac{T_{n} - \mu}{\sqrt{h_{n}}} + o_{\P}(1)
\end{equation*}
which ends the first part of the proof. The second part makes use of a result of Bickel and Rosenblatt \cite{BickelRosenblatt73}. Indeed, denote by $\bar{T}_{n}$ the statistic given in \eqref{BRStatF0} built on the strong white noise $(\veps_{t})$ instead of the residuals. They show that
\begin{eqnarray}
\label{CvgUnStable}
\frac{\vert T_{n} - \bar{T}_{n} \vert}{\sqrt{h_{n}}} = o_{\P}(1)
\end{eqnarray}
as soon as $n\, h_{n}^{9/2} \rightarrow 0$. A similar calculation leads to
\begin{equation*}
\widetilde{T}_{n} = I_{n} + \bar{T}_{n} + \bar{R}_{n}
\end{equation*}
where
\begin{equation*}
\bar{R}_{n} = 2\, n\, h_{n}\, \int_{\dR} \big( \widehat{f}_{n}(x) - f_{n}(x) \big)\, \big( f_{n}(x) - f(x) \big)\, a(x)\, \dd x.
\end{equation*}
We deduce that
\begin{equation*}
\frac{\vert \widetilde{T}_{n} - \bar{T}_{n} \vert}{\sqrt{h_{n}}} \leq \frac{I_{n}}{\sqrt{h_{n}}} + \frac{\vert \bar{R}_{n} \vert}{\sqrt{h_{n}}} = O_{\P}\bigg( \sqrt{\frac{I_{n}}{h_{n}}} \bigg) = o_{\P}(1)
\end{equation*}
as soon as $\bar{T}_{n}$ satisfies the original Bickel-Rosenblatt convergence and \eqref{CvgInStable} and \eqref{CvgRnStable} hold, that is $n\, h_{n}^{9/2} \rightarrow 0$ and $n\, h_{n}^4 \rightarrow +\infty$. It only remains to note that
\begin{eqnarray*}
\frac{\vert \widehat{T}_{n} - \widetilde{T}_{n} \vert}{\sqrt{h_{n}}} 
 & \leq & \frac{\vert \widehat{T}_{n} - T_{n} \vert}{\sqrt{h_{n}}} + \frac{\vert T_{n} - \bar{T}_{n} \vert}{\sqrt{h_{n}}} + \frac{\vert \bar{T}_{n} - \widetilde{T}_{n} \vert}{\sqrt{h_{n}}},
\end{eqnarray*}
each term being $o_{\P}(1)$.
The proof of Theorem \ref{ThmStable} is now complete.

\subsection{Purely explosive case - Proof of Theorem \ref{ThmExplo}}
\label{SecProofExplo}

Let $I_{n}$ be the first term of the right-hand side of \eqref{DecompStable}, like in the last proof, and note that
\begin{eqnarray}
\label{DecompExplo}
I_{n} ~ = ~ n\, h_{n} \int_{\dR} \big( \widehat{f}_{n}(x) - f_{n}(x) \big)^2 a(x)\, \dd x & \leq & 2\, (I_{n, 1} + I_{n, 2}),
\end{eqnarray}
where for an arbitrary rate that we set to $v_{n}^{\, 2} = \ln(n\, h_{n})$,
\begin{eqnarray*}
I_{n, 1} & = & \frac{1}{n\, h_{n}} \int_{\dR} \left\{ \sum_{t=1}^{n-[v_{n}]} \left(\dK\left(\frac{x-\widehat{\veps}_{t}}{h_{n}} \right) - \dK\left(\frac{x-\veps_{t}}{h_{n}} \right)\right) \right\}^{\! 2} a(x)\, \dd x, \\
I_{n, 2} & = & \frac{1}{n\, h_{n}} \int_{\dR} \left\{ \sum_{t=n-[v_{n}]+1}^{n} \left(\dK\left(\frac{x-\widehat{\veps}_{t}}{h_{n}} \right) - \dK\left(\frac{x-\veps_{t}}{h_{n}} \right) \right) \right\}^{\! 2} a(x)\, \dd x.
\end{eqnarray*}
Under our hypotheses, this choice of $(v_{n})$ ensures
\begin{equation}
\label{PropVn}
\limn v_{n} = +\infty \hsp \text{and} \hsp \limn \frac{v_{n}^{\, 2}}{n \sqrt{h_{n}}} = 0.
\end{equation}
Let us look at $I_{n, 1}$. By Cauchy-Schwarz,
\begin{eqnarray*}
I_{n, 1} & \leq & \frac{n-[v_{n}]}{n\, h_{n}} \int_{\dR} \sum_{t=1}^{n-[v_{n}]} \left[ \dK\left(\frac{x-\widehat{\veps}_{t}}{h_{n}} \right) - \dK\left(\frac{x-\veps_{t}}{h_{n}} \right) \right]^2 a(x)\, \dd x \\
 & \leq & \frac{2\, \Vert \dK \Vert_{\infty}\, (n-[v_{n}])}{n\, h_{n}} \int_{\dR} \sum_{t=1}^{n-[v_{n}]} \left\vert \dK\left(\frac{x-\widehat{\veps}_{t}}{h_{n}} \right) - \dK\left(\frac{x-\veps_{t}}{h_{n}} \right) \right\vert a(x)\, \dd x \\
 & \leq & \frac{2\, \Vert a \Vert_{\infty}\, \Vert \dK \Vert_{\infty}\, (n-[v_{n}])}{n} \int_{\dR} \sum_{t=1}^{n-[v_{n}]} \left\vert \dK\left(u + \frac{\veps_{t} - \widehat{\veps}_{t}}{h_{n}} \right) - \dK(u) \right\vert\, \dd u \\
 & \leq & \frac{2\, B\, \Vert a \Vert_{\infty}\, \Vert \dK \Vert_{\infty}\, (n-[v_{n}])}{n\, h_{n}} \sum_{t=1}^{n-[v_{n}]} \left\vert \veps_{t} - \widehat{\veps}_{t} \right\vert \\
 & = & \frac{2\, B\, \Vert a \Vert_{\infty}\, \Vert \dK \Vert_{\infty}\, (n-[v_{n}])}{n\, h_{n}} \sum_{t=1}^{n-[v_{n}]} \big\vert (\hTTn - \theta)\tr\, \Phi_{t-1} \big\vert \\
 & = & \frac{2\, B\, \Vert a \Vert_{\infty}\, \Vert \dK \Vert_{\infty}\, (n-[v_{n}])}{n\, h_{n}} \sum_{t=1}^{n-[v_{n}]} \big\vert (\hTTn - \theta)\tr\, C_{\theta}^{n}\, C_{\theta}^{-[v_{n}]}\, C_{\theta}^{-n+[v_{n}]}\, \Phi_{t-1} \big\vert. \\
\end{eqnarray*}
Here we recall that $\vert \lambda_{p} \vert > 1$. Consequently, $\rho(C_{\theta}^{-1}) = 1/\vert \lambda_{p} \vert < 1$. It follows that there exists a matrix norm $\Vert \cdot \Vert_{*} = \sup(\vert \cdot\, u \vert_{*}\,;\, u \in \dC^{p},\, \vert u \vert_{*} = 1)$ satisfying $\Vert C_{\theta}^{-1} \Vert_{*} < 1$ (see \textit{e.g.} \cite[Prop. 2.3.15]{Duflo97}), and a constant $k^{*}$ such that, with $C^{*} = 2\, B\, k^{*}\, \Vert a \Vert_{\infty}\, \Vert \dK \Vert_{\infty}$, we obtain
\begin{eqnarray*}
I_{n, 1} & \leq & \frac{C^{*}\, (n-[v_{n}])}{n\, h_{n}} \sum_{t=1}^{n-[v_{n}]} \big\vert (\hTTn - \theta)\tr\, C_{\theta}^{n} \big\vert_{*} ~ \big\vert C_{\theta}^{-[v_{n}]}\, C_{\theta}^{-n+[v_{n}]}\, \Phi_{t-1} \big\vert_{*} \\
 & \leq & \frac{C^{*}\, (n-[v_{n}])}{n\, h_{n}} ~ \big\vert (\hTTn - \theta)\tr\, C_{\theta}^{n} \big\vert_{*} ~ \Vert C_{\theta}^{-1} \Vert_{*}^{[v_{n}]} \sum_{t=1}^{n-[v_{n}]} \big\vert C_{\theta}^{-n+[v_{n}]}\, \Phi_{t-1} \big\vert_{*} \\
 & \leq & \frac{C^{*}\, (n-[v_{n}])}{n\, h_{n}} ~ \big\vert (\hTTn - \theta)\tr\, C_{\theta}^{n} \big\vert_{*} ~ \Vert C_{\theta}^{-1} \Vert_{*}^{[v_{n}]} \sup_{0\, \leq\, t\, \leq\, n-[v_{n}]-1} \big\vert C_{\theta}^{-t}\, \Phi_{t} \big\vert_{*} \sum_{\ell=1}^{n-[v_{n}]} \Vert C_{\theta}^{-1} \Vert_{*}^{\ell}.
\end{eqnarray*}
But Proposition \ref{PropExplo} ensures that $\sup_{t} \vert C_{\theta}^{-t}\, \Phi_{t} \vert_{*}$ is a.s. bounded for a sufficiently large $n$, and we also have
\begin{eqnarray*}
(\hTTn - \theta)\tr\, C_{\theta}^{n} & = & \left( \sum_{t=1}^n \Phi_{t-1}\tr\, \veps_{t} \right) ( C_{\theta}^{-n} )\tr ~ ( C_{\theta}^{n} )\tr \left(\sum_{t=1}^n \Phi_{t-1} \Phi_{t-1}\tr \right)^{\! -1} C_{\theta}^{n} \\
 & = & \sum_{t=1}^n ( C_{\theta}^{-n}\, \Phi_{t-1}\, \veps_{t} )\tr G_{n}^{-1}
\end{eqnarray*}
where
\begin{equation*}
G_{n} = C_{\theta}^{-n} \left(\sum_{t=1}^n \Phi_{t-1} \Phi_{t-1}\tr \right) ( C_{\theta}^{-n} )\tr.
\end{equation*}
We deduce that, for some constant $k^{*}$,
\begin{equation*}
\big\vert (\hTTn - \theta)\tr\, C_{\theta}^{n} \big\vert_{*} \leq k^{*}\, \veps_{n}^{\sharp}\, \Vert G_{n}^{-1} \Vert_{*} \sup_{0\, \leq\, t\, \leq\, n-1} \big\vert (C_{\theta}^{-t}\, \Phi_{t})\tr \big\vert_{*} \sum_{\ell=1}^{n} \Vert C_{\theta}^{-1} \Vert_{*}^{\ell}
\end{equation*}
where \cite[Cor. 1.3.21]{Duflo97} shows that $\veps_{n}^{\sharp} = \sup_{t} \vert \veps_{t} \vert = o(\sqrt{n})$ a.s. under our conditions of moments on $(\veps_{t})$. Proposition \ref{PropExplo} and the fact that $\Vert C_{\theta}^{-1} \Vert_{*} < 1$ lead to
\begin{equation}
\label{DecompExplo1}
I_{n, 1} = o\left( \frac{n - [v_{n}]}{\sqrt{n}\, h_{n}} ~ R^{\, [v_{n}]} \right) \as
\end{equation}
for some $0 < R < 1$. Let us now turn to $I_{n, 2}$ for which the same strategy gives
\begin{eqnarray*}
I_{2,n} & \leq & \frac{[v_{n}]}{n\, h_{n}} ~ \sum_{t=n-[v_{n}]+1}^{n} \int_{\dR} \left[ \dK\left( \frac{x - \widehat{\veps}_{t}}{h_{n}} \right) - \dK\left( \frac{x - \veps_{t}}{h_{n}} \right) \right]^2 a(x)\, \dd x \\
 & \leq & \frac{2\, [v_{n}]}{n\, h_{n}} ~ \sum_{t=n-[v_{n}]+1}^{n} \int_{\dR} \left[ \dK^2\!\left( \frac{x - \widehat{\veps}_{t}}{h_{n}} \right) + \dK^2\!\left( \frac{x - \veps_{t}}{h_{n}} \right) \right] a(x)\, \dd x \\
 & \leq & \frac{4\, \Vert a \Vert_{\infty}\, [v_{n}]}{n} ~ \sum_{t=n-[v_{n}]+1}^{n} \int_{\dR} \dK^2(u)\, \dd u \\
 & = & \frac{4\, \Vert a \Vert_{\infty}\, [v_{n}]^2}{n} ~ \int_{\dR} \dK^2(u)\, \dd u.
\end{eqnarray*}
Thus,
\begin{equation}
\label{DecompExplo2}
I_{n, 2} = O\left( \frac{[v_{n}]^2}{n} \right) \as
\end{equation}
Finally, from \eqref{DecompStable}, \eqref{DecompExplo1} and \eqref{DecompExplo2}, we have
\begin{equation}
\label{DecompExploFin}
I_{n} = O\left( \frac{n - [v_{n}]}{\sqrt{n}\, h_{n}} ~ R^{\, [v_{n}]} + \frac{[v_{n}]^2}{n} \right) = o(\sqrt{h_{n}}) \as 
\end{equation}
for some $0 < R < 1$, as a consequence of the properties of $(v_{n})$ in \eqref{PropVn}. The cross term $R_{n}$ is treated in the same way as in the proof of Theorem \ref{ThmStable}. Indeed, since our choice of $(v_{n})$ also ensures that $I_{n} = o(h_{n})$ a.s., the same reasoning leads to the conclusion. It follows from \eqref{DecompExplo} and \eqref{DecompExploFin} that
\begin{equation*}
\frac{\widehat{T}_{n} - \mu}{\sqrt{h_{n}}} = \frac{T_{n} - \mu}{\sqrt{h_{n}}} + o(1) \as
\end{equation*}
which ends the first part of the proof. The second part merely consists in noting that \eqref{CvgUnStable} still holds.
The proof of Theorem \ref{ThmExplo} is now complete.

\subsection{Unstable case - Proof of Propositions \ref{PropUnstable} and \ref{PropUnivUnstable}}
\label{SecProofUnstable}

In this proof, the notation $\Longrightarrow$ refers to the weak convergence of sequences of random elements in $D([0,1])$, the space of right continuous functions on $[0,1]$ having left-hand limits, equipped with the Skorokhod topology (see Billingsley \cite{Billingsley99}). One can find in Thm. 2.7 of the same reference the statement and the proof of the continous mapping theorem. In Lemmas \ref{LemUnstable1} and \ref{LemUnstable2} below, $(Y_{n})_{n \geq 1}$ is a sequence of random variables satisfying, for some $\delta > 0$,
\begin{equation}
\label{InvPrinc}
\frac{1}{n^{\delta}} \sum_{t\, \in\, \cP_{[n\boldsymbol{\cdot}]}} Y_{t} \cvgd L_{\cP}(\boldsymbol{\cdot}) \hsp \text{and} \hsp \frac{1}{n^{\delta}} \sum_{t=1}^{[n\boldsymbol{\cdot}]} (-1)^{[n\boldsymbol{\cdot}]-t}\, Y_{t} \cvgd \Lambda(\boldsymbol{\cdot})
\end{equation}
where $L_{\cP}$ and $\Lambda$ are random paths in $D([0,1])$ and where $\cP_{[n\boldsymbol{\cdot}]}$ stands either for $\{ 1, \hdots, [n\boldsymbol{\cdot}] \}$ or for the sets of even/odd integers in $\{ 1, \hdots, [n\boldsymbol{\cdot}] \}$. We use the convention $\sum_{\varnothing} = 0$ and the notation $[x]$ to designate the integer part of any $x \geq 0$.

\begin{rem}
The first convergence obviously holds for a strong white noise having a finite variance and $\delta=1/2$, \textit{via} Donsker theorem \cite[Thm. 8.2]{Billingsley99}. In this case, $L_{\cP}$ is a Wiener process and $\cP_{[n\boldsymbol{\cdot}]}$ changes its variance (half as many terms leads to a doubled variance). If the distribution is symmetric, then $\Lambda \equiv L_{\cP}$ for $\cP_{[n\boldsymbol{\cdot}]} = \{ 1, \hdots, [n\boldsymbol{\cdot}] \}$, whereas an invariance principle for stationary processes (see, \textit{e.g.}, \cite[Thm. 1]{DedeckerRio00}) enables to identify $\Lambda$ in the nonsymmetric case.
\end{rem}

\begin{lem}[Univariate unstable case]
\label{LemUnstable1}
Let $X_0 = 0$ and, for $1 \leq t \leq n$, consider
\begin{equation*}
X_{t} = \theta\, X_{t-1} + Y_{t}, \hsp \theta = \pm 1,
\end{equation*}
where $(Y_{t})$ satisfies \eqref{InvPrinc}. Then,
\begin{equation*}
\sum_{t=1}^{n} X_{t} = \left\{
\begin{array}{ll}
O_{\P}(n^{\delta+1}) & \mbox{for } \theta=1 \\
O_{\P}(n^{\delta}) & \mbox{for } \theta=-1
\end{array}
\right.
\end{equation*}
and, in both cases, for $k \in \dN$,
\begin{equation*}
\sum_{t=1}^{n} X_{t}^{\, 2 k} = O_{\P}(n^{2 k \delta+1}).
\end{equation*}
\end{lem}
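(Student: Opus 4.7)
The plan is to combine a uniform control on the paths $(X_t)$, obtained directly from the invariance principles in \eqref{InvPrinc}, with a simple telescoping identity that is needed only when $\theta = -1$. First I would iterate the recursion: for $\theta = 1$, $X_t = \sum_{s=1}^t Y_s$, and the first line of \eqref{InvPrinc} (taken with $\cP_{[n\cdot]} = \{1, \ldots, [n\cdot]\}$) rewrites as $n^{-\delta} X_{[n\cdot]} \cvgd L(\cdot)$ in $D([0,1])$; for $\theta = -1$, $X_t = \sum_{s=1}^t (-1)^{t-s} Y_s$, which is exactly the quantity controlled by the second line of \eqref{InvPrinc}, so $n^{-\delta} X_{[n\cdot]} \cvgd \Lambda(\cdot)$. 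Applying the continuous mapping theorem to the supremum functional $\phi \mapsto \sup_{u \in [0,1]} |\phi(u)|$ (continuous on $D([0,1])$) then yields in both cases
\begin{equation*}
\sup_{1 \leq t \leq n} |X_t| = O_\P(n^\delta).
\end{equation*}

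The even-moment bound is then a one-line consequence of this uniform control:
\begin{equation*}
\sum_{t=1}^n X_t^{2k} \leq n \sup_{1 \leq t \leq n} X_t^{2k} = O_\P\big(n \cdot n^{2k\delta}\big) = O_\P\big(n^{2k\delta+1}\big),
\end{equation*}
and the linear sum in the case $\theta = 1$ is handled the same way: $\big|\sum_{t=1}^n X_t\big| \leq n \sup_t |X_t| = O_\P(n^{\delta+1})$, which is sharp since in the random-walk case the partial sums $X_t$ do not cancel.

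The only delicate point is the linear sum when $\theta = -1$, where cancellations must be exploited via a telescoping identity. The recursion $X_t = -X_{t-1} + Y_t$ gives $Y_t = X_t + X_{t-1}$ for all $t \geq 1$, so (using $X_0 = 0$)
\begin{equation*}
\sum_{t=1}^n Y_t = \sum_{t=1}^n X_t + \sum_{t=0}^{n-1} X_t = 2 \sum_{t=1}^{n-1} X_t + X_n,
\end{equation*}
whence
\begin{equation*}
\sum_{t=1}^n X_t = \tfrac{1}{2} \sum_{t=1}^n Y_t + \tfrac{1}{2} X_n = O_\P(n^\delta),
\end{equation*}
the first term being $O_\P(n^\delta)$ by \eqref{InvPrinc} evaluated at $u=1$ and the second by the uniform control above. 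The main (mild) obstacle is pairing up the correct invariance principle (first or second line of \eqref{InvPrinc}) with each of the cases $\theta = \pm 1$; once this matching is in place, everything else reduces either to a crude supremum bound or to the one-line telescoping.
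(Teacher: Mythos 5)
Your proof is correct, but it takes a route that differs from the paper's in two respects, both worth noting. First, where you extract only a uniform bound $\sup_{1\leq t\leq n}\vert X_{t}\vert=O_{\P}(n^{\delta})$ by composing \eqref{InvPrinc} with the supremum functional and then crudely bound every sum by $n$ times the supremum, the paper instead writes each normalized sum as a Riemann-type integral of the rescaled partial-sum process, $n^{-(2k\delta+1)}\sum_{t}X_{t}^{\,2k}=\sum_{t}\int_{t/n}^{(t+1)/n}\bigl(n^{-\delta}\sum_{s\leq [nu]}(\pm1)^{[nu]-s}Y_{s}\bigr)^{2k}\dd u$, and applies the continuous mapping theorem to the integral functional. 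This yields genuine weak limits such as $\int_{0}^{1}L^{2k}(u)\,\dd u$ rather than mere $O_{\P}$ bounds; that extra information is not needed for the lemma as stated, but the explicit limit $\int_{0}^{1}L(u)\,\dd u$ of $n^{-(\delta+1)}\sum_{t}X_{t}$ in the case $\theta=1$ is reused later in the proof of Proposition \ref{PropUnivUnstable} to identify the limiting distribution when $\int_{\dR}\dK^{\prime}(s)\,\dd s\neq0$, so your version would have to be upgraded at that point. Second, for the linear sum with $\theta=-1$ the paper uses the parity identity $\sum_{t=1}^{n}X_{t}=\sum_{t\in\cP_{n}}Y_{t}$ (sum of $Y_{t}$ over the indices of the same parity as $n$) and invokes the $\cP$-indexed version of the first convergence in \eqref{InvPrinc}, whereas your telescoping identity $\sum_{t=1}^{n}X_{t}=\tfrac{1}{2}\sum_{t=1}^{n}Y_{t}+\tfrac{1}{2}X_{n}$ needs only the full-index invariance principle together with your uniform control on $X_{n}$; this is a mild economy of hypotheses and arguably cleaner. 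Both arguments are valid; yours is shorter and more elementary, the paper's is sharper and feeds the later computations.
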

\begin{proof}
Let $\theta=1$. Then,
\begin{eqnarray*}
\frac{1}{n^{\delta+1}} \sum_{t=1}^{n} X_{t} & = & \frac{1}{n} \sum_{t=1}^{n} \frac{1}{n^{\delta}} \sum_{s=1}^{t} Y_{s} \\
 & = & \sum_{t=1}^{n} \int_{\frac{t}{n}}^{\frac{t+1}{n}} \frac{1}{n^{\delta}} \sum_{s=1}^{[n u]} Y_{s}\, \dd u ~ \cvgl ~ \int_{0}^{1} L(u)\, \dd u
\end{eqnarray*}
from the continuous mapping theorem, where $L \equiv L_{\cP}$ for $\cP_{[n\boldsymbol{\cdot}]} = \{ 1, \hdots, [n\boldsymbol{\cdot}] \}$. Following the same lines,
\begin{eqnarray*}
\frac{1}{n^{2 k \delta+1}} \sum_{t=1}^{n} X_{t}^{\, 2 k} & = & \frac{1}{n} \sum_{t=1}^{n} \left( \frac{1}{n^{\delta}} \sum_{s=1}^{t} Y_{s} \right)^{\! 2 k} \\
 & = & \sum_{t=1}^{n} \int_{\frac{t}{n}}^{\frac{t+1}{n}} \left( \frac{1}{n^{\delta}} \sum_{s=1}^{[n u]} Y_{s} \right)^{\! 2 k} \dd u ~ \cvgl ~ \int_{0}^{1} L^{2 k}(u)\, \dd u.
\end{eqnarray*}
Now let $\theta=-1$. Then,
\begin{equation*}
\sum_{t=1}^{n} X_{t} = \sum_{t\, \in\, \cP_{n}} Y_{t}
\end{equation*}
where $\cP_{n} \subset \{ 1, \hdots, n \}$ is the set of even integers (resp. odd integers) between 1 and $n$, for an even (resp. odd) $n$. Then we conclude to the first result. Finally,
\begin{eqnarray*}
\frac{1}{n^{2 k \delta+1}} \sum_{t=1}^{n} X_{t}^{\, 2 k} & = & \frac{1}{n} \sum_{t=1}^{n} \left( \frac{1}{n^{\delta}} \sum_{s=1}^{t} (-1)^{t-s}\, Y_{s} \right)^{\! 2 k} \\
 & = & \sum_{t=1}^{n} \int_{\frac{t}{n}}^{\frac{t+1}{n}} \left( \frac{1}{n^{\delta}} \sum_{s=1}^{[n u]} (-1)^{[n u]-s}\, Y_{s} \right)^{\! 2 k} \dd u ~ \cvgl ~ \int_{0}^{1} \Lambda^{2 k}(u)\, \dd u.
\end{eqnarray*}
\end{proof}

\noindent
\textbf{Proof of Proposition \ref{PropUnstable} -- Seasonal case}.
Let  $X_{-s+1} = \hdots = X_0 = 0$ and $\cS_{k} = \{ \ell \in \{ 1, \hdots, n \},\, \ell[s] = k \}$, for $k \in \{ 0, \hdots, s-1 \}$, the notation $\ell[s]$ refering to the remainder of the Euclidean division of $\ell$ by $s$. Then the subsets $\cS_{k}$ form a partition of $\{ 1, \hdots, n \}$, the path $(X_1, \hdots, X_{n})$ is made of $s$ uncorrelated random walks and Donsker theorem directly gives the $O_{\P}(\sqrt{n})$ rate for $\vert X_{n} \vert$. In addition,
\begin{equation*}
\sum_{t=1}^{n} X_{t}^{\, a} = \sum_{k=0}^{s-1} \sum_{t\, \in\, \cS_{k}} X_{t}^{\, a}
\end{equation*}
so that, for $a \in \{1, 2, 4 \}$, the results follow from Lemma \ref{LemUnstable1}.
\hfill $\square$\\

We now turn to the proof of Proposition \ref{PropUnivUnstable}. Following the idea of \cite{LeeNa02}, we make once again the decomposition
\begin{equation}
\label{DecompUnstable}
\frac{\widehat{T}_{n} - \mu}{\sqrt{h_{n}}} = \frac{I_{n}}{\sqrt{h_{n}}} + \frac{R_{n}}{\sqrt{h_{n}}} + \frac{T_{n} - \mu}{\sqrt{h_{n}}}
\end{equation}
where $\mu$ is the centering term \eqref{CenterBR} of the statistic, $I_{n}$ is given in \eqref{DecompStable2T}, $T_{n}$ is the original Bickel-Rosenblatt statistic and $R_{n}$ is the cross term in \eqref{DecompStable}
such that
\begin{equation}
\label{CvgRnUnstable}
\frac{\vert R_{n} \vert}{\sqrt{h_{n}}} = O_{\P}\bigg( \sqrt{\frac{I_{n}}{h_{n}}} \bigg)
\end{equation}
as we have seen earlier. Going on with the decomposition of $I_{n}$ in \eqref{DecompStable2T} and using the same notation, we establish the two following lemmas. In all the sequel, the case $p=s$ refers to the seasonal process $\theta_1 = \hdots = \theta_{s-1} = 0$ and $\theta_{s}=1$.
\begin{lem}
\label{LemUnstable3}
When $p=1$ or $p=s$, Lemma \ref{LemStable2} still holds in the unstable case.
\end{lem}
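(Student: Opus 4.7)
The plan is to reuse verbatim the decomposition
$$
I_{2,n} \leq 3\,\Vert \hTTn - \theta \Vert^4\, ( K_{1,n} + K_{2,n} + K_{3,n} )
$$
introduced in \eqref{DecompStableIn2}, and merely re-evaluate the orders of magnitude of $\Vert \hTTn - \theta \Vert$, of the sums $\sum \Vert \Phi_{t-1} \Vert^{2k}$ and of $\sum \dE[\Vert \Phi_{t-1} \Vert^4]$ using the tools available in the unstable case. The key ingredients are: (i) $\Vert \hTTn - \theta \Vert = O_{\P}(n^{-1})$ (Proposition \ref{PropOLSUnstable}), which is faster by a factor $\sqrt{n}$ than the stable rate; (ii) Lemma \ref{LemUnstable1} together with Proposition \ref{PropUnstable}, which give $\sum_{t=1}^n X_t^{2k} = O_{\P}(n^{k+1})$ both for $p=1$ and for the seasonal case $p=s$ (using the partition argument of the seasonal proof), so that $\sum \Vert \Phi_{t-1} \Vert^2 = O_{\P}(n^2)$ and $\sum \Vert \Phi_{t-1} \Vert^4 = O_{\P}(n^3)$; (iii) $\dE[X_t^{\,4}] = O(t^2)$ for such random walks, so that $\sum_{t=1}^n \dE[\Vert \Phi_{t-1} \Vert^4] = O(n^3)$.

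First, for $K_{1,n}$, the bound \eqref{DecompStableK1} is of a purely algebraic nature, hence still valid. Multiplying by $\Vert \hTTn - \theta \Vert^4$ and plugging the new rates,
$$
\Vert \hTTn - \theta \Vert^4\, K_{1,n} = O_{\P}\!\left( \frac{\Vert \hTTn - \theta \Vert^6 \sum\Vert \Phi_{t-1}\Vert^4 \sum\Vert \Phi_{t-1}\Vert^2}{h_n^{\,2}} \right) = O_{\P}\!\left( \frac{n^{-6} \cdot n^3 \cdot n^2}{h_n^{\,2}} \right) = O_{\P}\!\left( \frac{1}{n\, h_n^{\,2}} \right),
$$
exactly the stable-case rate; the extra power $n^4$ gained in the two power sums is cancelled by the extra $n^{-3}$ in $\Vert \hTTn - \theta \Vert^6$.

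Second, for $K_{2,n}$, the argument leading to \eqref{DecompStableK2} works without change, since the centred variable $\dK^{\prime \prime}((x-\veps_t)/h_n) - \dE[\dK^{\prime \prime}((x-\veps_1)/h_n)]$ is independent of $\mathcal{F}_{t-1}$ and mean zero, so the cross terms vanish in expectation and one obtains $\dE[K_{2,n}] = O(n^3\, h_n)$. Multiplied by $\Vert \hTTn - \theta \Vert^4 = O_{\P}(n^{-4})$ this yields $O_{\P}(h_n/n)$, which is absorbed in the target rate. Similarly, $K_{3,n}$ satisfies the analogue of \eqref{DecompStableK3} with $(\sum \Vert \Phi_{t-1}\Vert^2)^2 = O_{\P}(n^4)$, so that $\Vert \hTTn - \theta \Vert^4\, K_{3,n} = O_{\P}(n^{-4}\cdot n^4\, h_n^{\,2}) = O_{\P}(h_n^{\,2})$. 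Assembling the three contributions delivers $I_{2,n} = O_{\P}(h_n^{\,2} + 1/(n\, h_n^{\,2}))$ as required.

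There is no genuine obstacle: the whole point is the bookkeeping balance between the accelerated consistency of $\hTTn$ and the inflated moments of $(X_t)$ in the unstable regime, which turn out to compensate exactly. For the seasonal case $p=s$, $\Phi_{t-1}$ is $s$-dimensional but $\Vert \Phi_{t-1}\Vert^{2k} \leq s^{k-1}\sum_{j=1}^s X_{t-j}^{\,2k}$ brings us back to the same scalar bounds through Proposition \ref{PropUnstable}, so the identical chain of inequalities applies with only bounded multiplicative constants depending on $s$.
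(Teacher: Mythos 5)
Your proof is correct and follows essentially the same route as the paper: reuse the decomposition \eqref{DecompStableIn2}, plug in the unstable rates $\Vert \hTTn - \theta \Vert = O_{\P}(n^{-1})$, $\sum X_t^2 = O_{\P}(n^2)$, $\sum X_t^4 = O_{\P}(n^3)$ and $\dE[X_t^4]=O(t^2)$, and observe that the accelerated consistency exactly offsets the inflated moments, yielding $K_{1,n}=O_{\P}(n^3 h_n^{-2})$, $K_{2,n}=O_{\P}(n^3 h_n)$, $K_{3,n}=O_{\P}(n^4 h_n^2)$ and hence $I_{2,n}=O_{\P}(h_n^2 + n^{-1}h_n^{-2})$. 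The only cosmetic difference is that the paper verifies $\dE[X_n^2]=O(n)$ and $\dE[X_n^4]=O(n^2)$ by an explicit expansion over the noise terms, whereas you invoke the random-walk structure directly; both are fine.
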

\begin{proof}
First, with $p=1$ and $\theta = \pm 1$, we note that
\begin{equation*}
\dE[X_{n}^2] = \sum_{1\, \leq\, t, s\, \leq\, n} \theta^{\, 2n - t-s}\, \dE[\veps_{t}\, \veps_{s}] = O(n)
\end{equation*}
and
\begin{equation*}
\dE[X_{n}^4] = \sum_{1\, \leq\, t, s, u, v\, \leq\, n} \theta^{\, 4n - t-s-u-v}\, \dE[\veps_{t}\, \veps_{s}\, \veps_{u}\, \veps_{v}] = O(n^2)
\end{equation*}
under our assumptions on the noise, and the same results obviously hold for $p=s$ where $X_{n}$ still has a random walk behavior. Thus, $K_{2,n}$ in \eqref{DecompStableK2} is $O_{\P}(n^3\, h_{n})$. Lemma \ref{LemUnstable1} and the second part of Proposition \ref{PropUnstable} also give
\begin{equation*}
\sum_{t=1}^{n} X_{t}^2 = O_{\P}(n^2) \hsp \text{and} \hsp \sum_{t=1}^{n} X_{t}^4 = O_{\P}(n^3)
\end{equation*}
and, accordingly with Proposition \ref{PropOLSUnstable}, $K_{1,n}$ in \eqref{DecompStableK1} is $O_{\P}(n^3\, h_{n}^{-2})$ and $K_{3,n}$ in \eqref{DecompStableK3} is $O_{\P}(n^4\, h_{n}^2)$. Finally, \eqref{DecompStableIn2} concludes the proof.
\end{proof}

\begin{lem}
\label{LemUnstable4}
Let $p=1$. Then, Lemma \ref{LemStable1} holds for $\lambda=\theta=-1$. On the contrary, for $\lambda=\theta=1$ and $\int_{\dR} \dK^{\prime}(s)\, \dd s \neq 0$,
\begin{equation*}
I_{1,n} = O_{\P}(n\, h_{n}^2)
\end{equation*}
whereas for $\int_{\dR} \dK^{\prime}(s)\, \dd s = 0$,
\begin{equation*}
I_{1,n} = O_{\P}(n\, h_{n}^4).
\end{equation*}
The last two results also hold for $p=s$.
\end{lem}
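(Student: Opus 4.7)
The plan is to revisit the decomposition $I_{1,n} \leq 2\,(J_{1,n} + J_{2,n})$ from \eqref{DecompStableIn1} and track how the rates of $J_{1,n}$ and $J_{2,n}$ change when the stability assumption is replaced by a unit root. Splitting $\dK^{\prime}((x-\veps_{t})/h_{n})$ into a centered part $v_{t}(x)$ and a deterministic part $e(x)$ is decisive: the centered part inherits the same rate as in the stable case, while $e(x)$ is what drives the dichotomy between $\theta=-1$ and $\theta=1$.

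First, I bound $J_{1,n}$ exactly as in the stable proof. Independence of $X_{t-i}$ and $v_{t}(x)$ (for $i\geq 1$) still yields
\begin{equation*}
\dE\!\left[\int_{\dR} \Big(\sum_{t=1}^{n} X_{t-i}\, v_{t}(x)\Big)^{\!2} a(x)\, \dd x \right] = O(h_{n}) \sum_{t=1}^{n} \dE[X_{t-i}^{\,2}].
\end{equation*}
In the unstable regime $\dE[X_{n}^{\,2}] = O(n)$ instead of $O(1)$, so the total rate inflates to $O(n^{2} h_{n})$. Combined with $\Vert\hTTn-\theta\Vert^{2} = O_{\P}(n^{-2})$ supplied by Proposition \ref{PropOLSUnstable}, this exact cancellation recovers $J_{1,n}=O_{\P}(h_{n})$ in every unstable regime considered; the seasonal case $p=s$ is handled component by component using the random-walk moment estimates of Lemma \ref{LemUnstable1}.

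Second, I analyse $J_{2,n}$ via a Taylor expansion on the deterministic factor
\begin{equation*}
e(x) = h_{n} \int_{\dR} \dK^{\prime}(z)\, f(x-h_{n} z)\, \dd z = h_{n}\, f(x) \int_{\dR} \dK^{\prime}(s)\, \dd s - h_{n}^{2}\, f^{\prime}(x) \int_{\dR} s\, \dK^{\prime}(s)\, \dd s + O(h_{n}^{3}),
\end{equation*}
the remainder being controlled uniformly in $x$ thanks to (A$_{0}$) and the integrability hypotheses of (A$_{1}$). Hence $\int_{\dR} e^{\,2}(x)\, a(x)\, \dd x = O(h_{n}^{2})$ when $\int_{\dR} \dK^{\prime}(s)\, \dd s \neq 0$, and gains an extra factor $h_{n}^{2}$ when $\int_{\dR} \dK^{\prime}(s)\, \dd s = 0$. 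Injecting this into
\begin{equation*}
J_{2,n} = \Big( (\hTTn-\theta)\tr\, \sum_{t=1}^{n}\Phi_{t-1}\Big)^{\!2} \int_{\dR} e^{\,2}(x)\, a(x)\, \dd x,
\end{equation*}
and using Proposition \ref{PropUnstable} (respectively Lemma \ref{LemUnstable1} in the seasonal case) for $\sum_{t=1}^{n}\Phi_{t-1}$ together with $\Vert\hTTn-\theta\Vert = O_{\P}(n^{-1})$, we find that $\big((\hTTn-\theta)\tr \sum_{t=1}^{n} \Phi_{t-1}\big)^{2}$ is $O_{\P}(n^{-1})$ for $\theta=-1$, and $O_{\P}(n)$ for $\theta=1$ as well as for $p=s$ (where each component $\sum_{t} X_{t-i}$ is an $O_{\P}(n^{3/2})$ random walk along its residue class).

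Assembling the pieces finishes the proof. For $\theta=-1$, $J_{2,n} = O_{\P}(h_{n}^{2}/n) = o_{\P}(h_{n})$, giving $I_{1,n} = O_{\P}(h_{n})$ as in the stable case. For $\theta=1$ (or $p=s$) with $\int_{\dR}\dK^{\prime}(s)\,\dd s \neq 0$, $J_{2,n} = O_{\P}(n h_{n}^{2})$ dominates $J_{1,n}=O_{\P}(h_{n})$, whence $I_{1,n} = O_{\P}(n h_{n}^{2})$. For $\theta=1$ (or $p=s$) with $\int_{\dR}\dK^{\prime}(s)\,\dd s = 0$, $J_{2,n} = O_{\P}(n h_{n}^{4})$, and since (A$_{3}$) with $\alpha=4$ forces $n h_{n}^{3} = (n h_{n}^{4})/h_{n} \to +\infty$, the rate $n h_{n}^{4}$ dominates $h_{n}$, yielding $I_{1,n} = O_{\P}(n h_{n}^{4})$. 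The only genuine obstacle lies in obtaining the two-regime estimate on $\int_{\dR} e^{\,2}(x)\, a(x)\, \dd x$: it is precisely the vanishing of the leading Taylor coefficient $\int_{\dR} \dK^{\prime}(s)\, \dd s$ that creates the gap between the $h_{n}^{2}$ and $h_{n}^{4}$ regimes. Everything else is careful bookkeeping of orders already available from the propositions cited.
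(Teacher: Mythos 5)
Your proof is correct and follows essentially the same route as the paper: the same decomposition $I_{1,n}\leq 2(J_{1,n}+J_{2,n})$, the same recovery of $J_{1,n}=O_{\P}(h_{n})$ from $\dE[X_{n}^{2}]=O(n)$ against the $O_{\P}(n^{-1})$ rate of the estimator, and the same two-regime expansion of $e(x)$ (the paper's $L_{n}$) combined with the $O_{\P}(\sqrt{n})$ versus $O_{\P}(n^{3/2})$ rates of $\sum_{t}X_{t}$ from Lemma \ref{LemUnstable1}. Your explicit use of (A$_{3}$) with $\alpha=4$ to compare $h_{n}$ with $n\,h_{n}^{4}$ only makes precise what the paper leaves as ``the hypotheses of Proposition \ref{PropUnivUnstable}.''
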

\begin{proof}
On the one hand, let $\theta=-1$. The reasoning of $\eqref{DecompStableJ1}$ shows that $J_{1,n}$ is still $O_{\P}(h_{n})$, using Proposition \ref{PropOLSUnstable} and $\dE[X_{n}^2] = O(n)$. From Lemma \ref{LemUnstable1}, we know that
\begin{equation*}
\sum_{t=1}^{n} X_{t} = O_{\P}(\sqrt{n}).
\end{equation*}
Then, $J_{2,n}$ in \eqref{DecompStableJ2} must be $O_{\P}(n^{-1}\, h_{n}^2)$ and $I_{1,n} = O_{\P}(h_{n})$. On the other hand, we have to investigate the case where $\theta=1$. Since $\dE[X_{n}^2]$ does not depend on $\theta$, $J_{1,n}$ has exactly the same behavior. For a better readability, let
\begin{equation*}
L_{n} = \int_{\dR} \left( \int_{\dR} \dK^{\prime}(z)\, f(x - h_{n}\, z)\, \dd z \right)^{\! 2} a(x)\, \dd x.
\end{equation*}
Under our assumptions, it is easy to see that
\begin{equation}
\label{UnstableLn1}
\limn L_{n} = \left(\int_{\dR} f^2(s)\, a(s)\, \dd s\right) \left( \int_{\dR} \dK^{\prime}(s)\, \dd s\right)^{\! 2}
\end{equation}
as soon as $\int_{\dR} \dK^{\prime}(s)\, \dd s \neq 0$, and that
\begin{equation}
\label{UnstableLn2}
\limn \frac{L_{n}}{h_{n}^{2}} = \left(\int_{\dR} f^{\prime}(s)^2\, a(s)\, \dd s\right) \left( \int_{\dR} s\, \dK^{\prime}(s)\, \dd s\right)^{\! 2}
\end{equation}
as soon as $\int_{\dR} \dK^{\prime}(s)\, \dd s = 0$. Lemma \ref{LemUnstable1} also gives
\begin{equation*}
\sum_{t=1}^{n} X_{t} = O_{\P}(n^{3/2})
\end{equation*}
which shows that $J_{2,n}$ is $O_{\P}(n\, h_{n}^2)$ or $O_{\P}(n\, h_{n}^4)$, depending on $\int_{\dR} \dK^{\prime}(s)\, \dd s$. The latter reasoning still applies for $p=s$, \textit{via} the second part of Proposition \ref{PropUnstable}. The proof is achieved using \eqref{DecompStableIn1} and the hypotheses of Proposition \ref{PropUnivUnstable}.
\end{proof}

The combination of Lemmas \ref{LemUnstable3} and \ref{LemUnstable4} is sufficient to establish that
\begin{equation*}
\frac{\widehat{T}_n - \mu}{\sqrt{h_{n}}} \cvgl \cN(0, \tau^2)
\end{equation*}
holds for $\theta=-1$, despite the instability, and replacing $\widehat{T}_{n}$ by $\widetilde{T}_{n}$ is possible without disturbing the asymptotic normality, as a consequence of \eqref{CvgUnStable} for $n\, h_{n}^{9/2} \rightarrow 0$. When $\theta=1$ or $p=s$ and $\int_{\dR} \dK^{\prime}(s)\, \dd s = 0$, Lemmas \ref{LemUnstable3} and \ref{LemUnstable4} show that $I_{n} = O_{\P}(h_{n}) = o_{\P}(\sqrt{h_{n}})$. It follows from \eqref{DecompUnstable} and \eqref{CvgRnUnstable} that
\begin{equation*}
\frac{\vert R_{n} \vert}{\sqrt{h_{n}}} = O_{\P}(1)
\end{equation*}
which unfortunately prevents us from concluding to the Bickel-Rosenblatt convergence in this case. However, we still have the order of magnitude
\begin{equation*}
\frac{\widehat{T}_n - \mu}{\sqrt{h_{n}}} = O_{\P}(1) \hsp \text{and} \hsp \frac{\widetilde{T}_n - \mu}{\sqrt{h_{n}}} = O_{\P}(1).
\end{equation*}
Finally for $\int_{\dR} \dK^{\prime}(s)\, \dd s \neq 0$, the same lines also imply, together with \eqref{DecompUnstable}, Lemmas \ref{LemUnstable3} and \ref{LemUnstable4} and the asymptotic normality \eqref{AsNormBR}, that
\begin{equation*}
h_{n} (\widehat{T}_{n} - \mu) = O_{\P}(1).
\end{equation*}
It remains to study the asymptotic behavior of the correctly renormalized statistic using the explicit expression of $J_{2,n}$ in \eqref{DecompStableJ2}. From the proof of Lemma \ref{LemUnstable1}, Proposition \ref{PropOLSUnstable} and the continuous mapping theorem, we get for the univariate unstable case with $\theta=1$,
\begin{equation*}
\big( n\, (\hTTn - \theta ) \big)^2 \left( \frac{1}{n^{3/2}} \sum_{t=1}^{n} X_{t-1} \right)^{\! 2} \cvgl \sigma^2 \left( \frac{\frac{1}{2}\, (W^2(1)-1)}{\int_0^1 W^2(u)\, \dd u}\, \int_0^1 W(u)\, \dd u \right)^{\! 2}
\end{equation*}
where $(W(t),\, t \in [0,1])$ is a standard Wiener process. Now for the seasonal case, reusing the same notation as above,
\begin{equation}
\label{SeasUnstable}
\frac{1}{n^{3/2}} \sum_{t=1}^{n} X_{t} = \sum_{k=0}^{s-1} \frac{1}{n^{3/2}} \sum_{t\, \in\, \cS_{k}} X_{t} \cvgl \sigma\, \sum_{k=0}^{s-1} \int_0^\frac{1}{s} W_{s}^{(k+1)}(u)\, \dd u
\end{equation}
where $W_{s}^{(i)}$ stands for the $i$--th component of the standard Wiener process $(W_{s}(t),\, t \in [0,1])$ of dimension $s$, since the path can be seen as the concatenation of $s$ uncorrelated random walks. Using the Cram\`er-Wold and the continuous mapping theorems, it follows that
\begin{equation*}
\left( n\, (\hTTn - \theta )\tr\, \frac{1}{n^{3/2}} \sum_{t=1}^{n} \Phi_{t-1} \right)^{\! 2} \cvgl \sigma^2 \big( S(W_{s})^{T}\, H(W_{s}) \big)^2
\end{equation*}
with the notation of Proposition \ref{PropOLSUnstable}, where $H(W_{s})$ is a random vector of size $s$ containing the limit variable \eqref{SeasUnstable} on each component. The proof is then ended using the limiting behavior of $L_n$. The proof of Theorem \ref{PropUnivUnstable} is now complete.

\begin{rem}
To go beyond, note that for $\theta=1$ and $\int_{\dR} \dK^{\prime}(s)\, \dd s \neq 0$, we get
\begin{equation*}
h_{n}\, \vert \widehat{T}_{n} - \widetilde{T}_{n} \vert \leq 2\, h_{n}\,\sqrt{\widehat{T}_{n}\, U_{n}}  + h_{n}\, U_{n}
\end{equation*}
where
\begin{equation*}
U_{n} = n\, h_{n} \int_{\dR} \big( (\dK_{h_{n}} * f)(x) - f(x) \big)^2\, a(x)\, \dd x.
\end{equation*}
A straightforward calculation shows that $U_{n} = O(n\, h_{n}^3)$ if $\int_{\dR} s\, \dK(s)\, \dd s \neq 0$ whereas $U_{n} = O(n\, h_{n}^5)$ if $\int_{\dR} s\, \dK(s)\, \dd s = 0$. In the second case, $\widehat{T}_{n}$ may be replaced by $\widetilde{T}_{n}$ as soon as $n\, h_{n}^6 \rightarrow 0$. But it the first case, one needs $n\, h_{n}^4 \rightarrow 0$ which contradicts  \ref{hyp:Halpha} with $\alpha=4$.
\end{rem}

This last lemma is not used as part of this paper, but it may be a trail for future studies. It is related to the conclusion of Section \ref{SecBR} and illustrates the phenomenon of compensation in purely unstable processes.

\begin{lem}
\label{LemUnstable2}
Let $X_{-1} = X_0 = 0$ and, for $1 \leq t \leq n$, consider
\begin{equation*}
X_{t} = 2 \cos(\omega)\, X_{t-1} - X_{t-2} + Y_{t}, \hsp 0 \leq \omega \leq \pi,
\end{equation*}
where $(Y_{t})$ satisfies \eqref{InvPrinc}. Then,
\begin{equation*}
\sum_{t=1}^{n} X_{t} = \left\{
\begin{array}{ll}
O_{\P}(n^{\delta+2}) & \mbox{for } \omega=0 \\
O_{\P}(n^{\delta}) & \mbox{for } 0 < \omega \leq \pi.
\end{array}
\right.
\end{equation*}
Moreover, if the process is generated by
\begin{equation*}
X_{t} = X_{t-2} + Y_{t},
\end{equation*}
then 
\begin{equation*}
\sum_{t=1}^{n} X_{t} = O_{\P}(n^{\delta+1}).
\end{equation*}
\end{lem}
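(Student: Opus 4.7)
The plan is to treat each of the three cases separately, the central tool for $\theta\in(0,\pi]$ being a summation identity that inverts the characteristic polynomial $\Theta(z)=1-2\cos\theta\,z+z^{2}$ at $z=1$; the case $\theta=0$ and the seasonal case require distinct arguments because their associated polynomials have a unit root at $1$.

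For $\theta\in(0,\pi]$, I would sum the recursion $X_t-2\cos\theta\,X_{t-1}+X_{t-2}=Y_t$ over $t=1,\ldots,n$. Using $X_{-1}=X_0=0$, the shifted partial sums evaluate to $\sum_{t=1}^{n}X_{t-1}=S_n-X_n$ and $\sum_{t=1}^{n}X_{t-2}=S_n-X_n-X_{n-1}$, with $S_n:=\sum_{t=1}^{n}X_t$. Substituting yields the key identity
\begin{equation*}
\sum_{t=1}^{n}Y_t \;=\; 2(1-\cos\theta)\,S_n \;+\;(2\cos\theta-1)\,X_n \;-\;X_{n-1},
\end{equation*}
which can be solved for $S_n$ because $2(1-\cos\theta)=\Theta(1)$ is strictly positive precisely when $1$ is not a root of $\Theta$. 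Since $\sum_t Y_t=O_\P(n^\delta)$ from \eqref{InvPrinc}, the target bound $S_n=O_\P(n^\delta)$ follows once $X_n=O_\P(n^\delta)$ is established. For the latter, diagonalise the companion matrix, whose eigenvalues are $e^{\pm i\theta}$ on the unit circle; for $\theta\in(0,\pi)$ this yields the closed-form representation $X_n=\sum_{s=1}^{n}\frac{\sin((n-s+1)\theta)}{\sin\theta}Y_s$, a bounded-coefficient linear combination of the noise. A direct second-moment computation then gives $\Var(X_n)=O(n)$ and hence $X_n=O_\P(\sqrt{n})$, matching $O_\P(n^\delta)$ in the standard regime $\delta=1/2$.

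For $\theta=0$ the identity above degenerates and one must iterate directly: the recursion is $\Delta^{2}X_t=Y_t$ with $\Delta=I-L$, and induction from $X_{-1}=X_0=0$ yields the explicit triangular representations $X_t=\sum_{s=1}^{t}(t-s+1)Y_s$ and $S_n=\sum_{s=1}^{n}\tfrac{(n-s+1)(n-s+2)}{2}Y_s$. A double Riemann-sum identification together with the continuous mapping theorem applied to \eqref{InvPrinc}, in the same spirit as in the proof of Lemma~\ref{LemUnstable1}, delivers $S_n=O_\P(n^{\delta+2})$. For the seasonal model $X_t=X_{t-2}+Y_t$, the even- and odd-indexed subsequences decouple into two independent random walks driven by $(Y_{2k})$ and $(Y_{2k+1})$ respectively; summing along each partition and invoking the even/odd partition form of \eqref{InvPrinc} yields two cumulative contributions each of order $n^{\delta+1}$, hence $S_n=O_\P(n^{\delta+1})$.

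The main obstacle is the endpoint $\theta=\pi$ of the middle case, where the companion matrix has a double eigenvalue at $-1$ and becomes a Jordan block, so the bounded-coefficient representation of $X_n$ used above breaks down. The natural fix is the substitution $Z_t=(-1)^{t}X_t$, which satisfies $\Delta^{2}Z_t=(-1)^{t}Y_t$, so that control of $X_n$ reduces to cumulative sums of the alternating process covered by the second half of \eqref{InvPrinc}; the subtlety is that $Z_t$ is itself a double integral of an alternating-sign noise, so $X_n$ is only $O_\P(n^{\delta+1})$ at $\theta=\pi$ and the conclusion $S_n=O_\P(n^\delta)$ in the key identity can only be rescued through a delicate cancellation between the boundary combination $(2\cos\theta-1)X_n-X_{n-1}$ and $\sum_t Y_t$---precisely the compensation phenomenon highlighted in the paragraph preceding the lemma, and the main hurdle to make the statement rigorous at this boundary value.
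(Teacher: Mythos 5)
Your key identity for $0<\theta\leq\pi$ is exactly the one the paper uses: summing the recursion and using $X_{-1}=X_0=0$ to get $2(1-\cos\theta)\,S_n+(2\cos\theta-1)X_n-X_{n-1}=\sum_t Y_t$. For the other two cases you diverge mildly from the paper, which never writes explicit coefficient representations: for $\theta=0$ it sets $Z_t=X_t-X_{t-1}$, notes $Z_t-Z_{t-1}=Y_t$, and applies Lemma \ref{LemUnstable1} twice (first to get $\sum Z_t=O_{\P}(n^{\delta+1})$, then to $X_t=X_{t-1}+Z_t$); for the seasonal model it notes $Z_t=-Z_{t-1}+Y_t$ and again iterates Lemma \ref{LemUnstable1}. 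Your triangular kernels $X_t=\sum_{s\leq t}(t-s+1)Y_s$ and the even/odd decoupling are correct and equivalent in substance; the paper's route is slightly slicker because it recycles the already-proved lemma instead of redoing a Riemann-sum argument. One caveat on your middle case: the variance computation giving $X_n=O_{\P}(\sqrt n)$ assumes second-order structure on $(Y_t)$ that the hypothesis \eqref{InvPrinc} alone does not provide (the lemma is stated for a general $\delta$ and a general driving sequence), so strictly you should obtain $X_n=O_{\P}(n^{\delta})$ by summation by parts against the partial sums controlled by \eqref{InvPrinc} rather than by a moment bound. That said, the paper supplies no argument at all for this step, so you are already more careful than the source.

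The obstacle you flag at $\theta=\pi$ is real, and the paper simply ignores it: its proof stops at the identity and tacitly uses $X_n=O_{\P}(n^{\delta})$, which fails when the companion matrix has a Jordan block at $-1$. Moreover, the cancellation you hope for does not occur. At $\theta=\pi$ one has $X_t=\sum_{s=1}^{t}(-1)^{t-s}(t-s+1)Y_s$, hence
\begin{equation*}
S_n=\sum_{s=1}^{n}Y_s\sum_{k=0}^{n-s}(-1)^{k}(k+1),\qquad \Big\vert\sum_{k=0}^{m}(-1)^{k}(k+1)\Big\vert\sim m/2,
\end{equation*}
so for an i.i.d.\ noise $\Var(S_n)\sim\sigma^2 n^3/12$ and $S_n$ is of exact order $n^{\delta+1}$, not $n^{\delta}$. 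The stated bound is therefore false at the endpoint $\theta=\pi$ (the correct claim holds only for $0<\theta<\pi$, with $\theta=\pi$ behaving like the double root at $1$ up to alternation). You should not feel obliged to close this "gap": it is a defect of the statement, and your identification of it is the most valuable part of your write-up.
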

\begin{proof}
For $\omega = 0$, let $Z_{t} = X_{t}-X_{t-1}$. Then, $Z_{t}-Z_{t-1} = Y_{t}$ and from Lemma \ref{LemUnstable1},
\begin{equation*}
\sum_{t=1}^{n} Z_{t} = O_{\P}(n^{\delta+1}).
\end{equation*}
A second application of Lemma \ref{LemUnstable1} gives the result. If $0 < \omega \leq \pi$, we obtain
\begin{equation*}
\frac{2\, (1 - \cos(\omega))}{n^{\delta}}\, \sum_{t=1}^{n} X_{t} = \frac{X_{n-1} + (1-2 \cos(\omega))\, X_{n}}{n^{\delta}} + \frac{1}{n^{\delta}} \sum_{t=1}^{n} Y_{t}.
\end{equation*}
In the last case, let $Z_{t} = X_{t}-X_{t-1}$ and note that $Z_{t} = -Z_{t-1} + Y_{t}$. Then, Lemma \ref{LemUnstable1} gives
\begin{equation*}
\sum_{t=1}^{n} Z_{t} = O_{\P}(n^{\delta}).
\end{equation*}
We conclude by applying again Lemma \ref{LemUnstable1} to $X_{t} = X_{t-1} + Z_{t}$.
\end{proof}

\begin{rem}
In the previous lemma, $\omega=0$ corresponds to the unit roots $\{ 1, 1 \}$ in an AR(2) process generated by $(Y_{t})$ whereas $0 < \omega \leq \pi$ corresponds to the unit roots $\{ \de^{\di\, \omega}, \de^{-\di\, \omega} \}$, including $\{-1,-1\}$ for $\omega=\pi$. The last case corresponds to the unit roots $\{ -1,1 \}$, in other words this is the seasonal model for $s=2$.
\end{rem}

\bibliographystyle{plain}
\nocite{*}
\bibliography{BR-AR}

\end{document}